\newtheorem{theorem}{Theorem}[section]
\newtheorem{lemma}[theorem]{Lemma}
\newtheorem{proposition}[theorem]{Proposition}
\newtheorem{corollary}[theorem]{Corollary}
\renewcommand{\gg}{\mathfrak{g}}  
\renewcommand{\aa}{\alpha}
\newcommand{\bb}{\beta}
\newcommand{\cc}{\gamma}
\begin{document}

\nocite{*}

\title{The subalgebras of $G_2$}

\author{Evgeny Mayanskiy}

\maketitle

\begin{abstract}
  We classify subalgebras of the complex simple Lie algebra of type $G_2$ up to conjugacy (by an inner automorphism).
\end{abstract}

\section{Introduction}

Classification of subalgebras of Lie algebras is a classical area of research \cite{Morozov}, \cite{Malcev}, \cite{Karpelevich}, \cite{Dynkin}. Nevertheless, an explicit description of \textit{all} subalgebras of exceptional Lie algebras does not seem to be readily available in the literature (see, however, \cite{Aschbacher}). Classification of subalgebras of complex semisimple Lie algebras of types $A_2$, $B_2=C_2$ and $D_2=A_1+A_1$ is the subject of recent work: \cite{RepkaA2}, \cite{RepkaC2}, \cite{RepkaD2}. In this note we do it for the complex simple Lie algebra of type $G_2$.\\

Our main result is the following. The actual subalgebras are listed in Table~\ref{table:1}, Table~\ref{table:2} and Table~\ref{table:3}. $G_2$~denotes the complex simple Lie algebra of type $G_2$.\\

\begin{theorem}\label{Theorem}
Any subalgebra $\mathfrak{g}\subset G_2$ is conjugate in $G_2$ (by an inner automorphism) to one and only one of the following subalgebras:
\begin{itemize}
\item $64$ types of regular subalgebras of $G_2$ listed in Table~\ref{table:1},
\item $2$ non-regular semisimple subalgebras of $G_2$ listed in Table~\ref{table:2},
\item $49$ types of non-regular solvable subalgebras of $G_2$ listed in Table~\ref{table:3}.
\end{itemize}
\end{theorem}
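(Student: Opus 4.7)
The plan is to stratify subalgebras of $G_2$ according to their Levi--Malcev decomposition $\mathfrak{g} = \mathfrak{s} \ltimes \mathfrak{r}$, where $\mathfrak{s}$ is a (possibly zero) semisimple Levi factor and $\mathfrak{r}$ is the solvable radical. The classification then splits into three disjoint cases: $\mathfrak{s}$ is a regular semisimple subalgebra (giving Table~\ref{table:1}), $\mathfrak{s}$ is a non-regular semisimple subalgebra (Table~\ref{table:2}), or $\mathfrak{s} = 0$ and $\mathfrak{g}$ is non-regular (Table~\ref{table:3}).

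First I would classify the semisimple subalgebras of $G_2$ up to inner conjugacy via the Dynkin--Malcev machinery. The regular ones come from closed subsystems of $\Phi(G_2)$: $A_2$, $A_1 + \tilde{A}_1$, and the individual long and short root $A_1$'s. The non-regular ones are exactly two $A_1$'s --- the principal $A_1$ of $G_2$ (Dynkin index $28$) and the $A_1$ acting irreducibly on the standard $7$-dimensional representation (index $3$). A direct centralizer computation shows that both non-regular $A_1$'s have zero centralizer in $G_2$, so they admit no nontrivial solvable extension; this both produces the $2$ entries of Table~\ref{table:2} and rules out any non-regular subalgebra of ``mixed'' type, justifying the three-way split.

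For regular subalgebras I would fix a Cartan $\mathfrak{h} \subset G_2$ and parametrize them as pairs $(\mathfrak{a}, \Psi)$ with $\mathfrak{a} \subset \mathfrak{h}$ a subspace and $\Psi \subset \Phi(G_2)$ a root subset, subject to: $\alpha+\beta \in \Psi$ whenever $\alpha,\beta \in \Psi$ and $\alpha+\beta \in \Phi(G_2)$; $h_\alpha \in \mathfrak{a}$ whenever $\{\alpha,-\alpha\} \subset \Psi$; and $\alpha|_{\mathfrak{a}} \neq 0$ whenever $\alpha \in \Psi$ but $-\alpha \notin \Psi$. This is a finite combinatorial problem on the $12$ roots of $G_2$ modulo the Weyl group $W(G_2)$ (dihedral of order $12$); enumerating systematically by $\dim \mathfrak{a}$ and by whether $\Psi$ is nilpotent ($\Psi \cap -\Psi = \emptyset$), reductive ($\Psi = -\Psi$), or mixed (parabolic/Borel type) produces the $64$ classes of Table~\ref{table:1}.

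Finally, for non-regular solvable subalgebras I would stratify by the $G_2$-conjugacy class of a generic element of the nilpotent ideal $\mathfrak{n} \subset \mathfrak{g}$, using that $G_2$ has precisely five nilpotent orbits ($0$, $A_1$, $\tilde{A}_1$, $G_2(a_1)$, $G_2$). For each orbit representative $x$, I would enumerate solvable subalgebras containing $x$ in their nilpotent radical by describing the centralizer of $x$, quotient by the action of its stabilizer to obtain conjugacy classes, and then impose the non-regularity condition that no maximal toral subalgebra of $\mathfrak{g}$ extends to a Cartan of $G_2$ compatibly with the root-space decomposition of $\mathfrak{n}$. Distinct classes are separated by invariants such as dimension, derived length, nilpotent type, and toral weights on $\mathfrak{n}$. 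The main obstacle lies in this last step: once root-system combinatorics is lost, the parameter spaces of solvable subalgebras become honest moduli, and both the enumeration of orbits and the verification that no duplicates arise require explicit matrix computations in the adjoint or $7$-dimensional representation. This bookkeeping, rather than any conceptual subtlety, is the hard part of the proof, and accounts for the $49$ entries of Table~\ref{table:3}.
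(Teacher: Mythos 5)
Your three-way split is not the one the paper uses, and as stated it has a genuine gap. Table~\ref{table:1} is the set of \emph{regular} subalgebras, which includes many solvable ones (with zero Levi factor), while Table~\ref{table:3} is the set of \emph{non-regular solvable} ones; the trichotomy is regular / non-regular semisimple / non-regular solvable. For this to be exhaustive one must prove that every non-semisimple, non-solvable subalgebra is automatically regular (the paper's Theorem~\ref{Theorem2}, via Lemmas~\ref{lemma:2}--\ref{lemma:4}). Your argument for ruling out ``mixed'' types only treats the case where the Levi factor is one of the two non-regular $A_1$'s, and even there ``zero centralizer'' is not enough: the radical of $\mathfrak{s}\ltimes\mathfrak{r}$ need not centralize $\mathfrak{s}$, it only needs to be an $\mathfrak{s}$-submodule, so one must check that no nontrivial $\mathfrak{s}$-stable bracket-closed complement exists (which the paper does by decomposing $G_2$ under each $\mathfrak{s}$ in Proposition~\ref{proposition:2}). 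More seriously, when the Levi factor is conjugate to the regular $A_1$ or $\tilde{A_1}$, the subalgebra $\mathfrak{s}\ltimes\mathfrak{r}$ is not \emph{a priori} regular; showing that every such extension is conjugate to a regular one is real work (Lemmas~\ref{lemma:3} and~\ref{lemma:4} analyze each possible $\mathfrak{s}$-module structure of the radical inside $G_2[\alpha]$, $G_2[\beta]$, $G_2(\alpha)$, $G_2(\beta)$ and apply explicit $\exp(\mathrm{ad})$ conjugations). Your proposal silently files these subalgebras into Table~\ref{table:1} without justification. A smaller but concrete error: the second non-regular $A_1$ has Dynkin index $4$, not $3$; the index-$3$ subalgebra $\tilde{A_1}=A_1^3$ is regular.

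Two further points where your outline underestimates the required arguments. First, for regular subalgebras the hard part is not the enumeration of pairs $(\Sigma,L)$ modulo the Weyl group but the proof that Weyl-inequivalent pairs give non-conjugate subalgebras: an inner automorphism carrying one regular subalgebra to another need not normalize $\mathfrak{h}$, and the paper's Proposition~\ref{proposition:1} is a long case-by-case analysis (ranks of $\mathrm{ad}_x$, Jordan types, derived ideals, and explicit bracket-preservation constraints) to exclude this. Second, in the solvable non-regular case you must also control the toral part: the paper's Lemma~\ref{lemma:5} shows that $\mathfrak{g}/(\mathfrak{g}\cap\mathfrak{n})$ is at most one-dimensional and generated by an element of a very restricted form, using Dynkin's criterion that a subalgebra containing a regular element of $G_2$ is regular; without this reduction your stratification by nilpotent orbits does not terminate in a finite list. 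Your overall architecture is salvageable, but these are the places where the actual content of the proof lives.
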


\vspace{3ex}

The proof proceeds by calculation in the Chevalley basis. Unless explicitly stated otherwise, $\oplus$ denotes the direct sum of vector spaces, not of Lie algebras. The base field is $\mathbb C$.\\

\section{$G_2$}

In this section we review the definition of $G_2$ in terms of the root spaces mostly following~\cite{Samelson}.\\

$G_2$ has $12$ roots which can be ordered as follows:
\begin{align*}
-(3\aa +2\bb) \prec -(3\aa +\bb) \prec -(2\aa +\bb) \prec -(\aa +\bb) \prec -\bb \prec \\
\prec -\aa \prec 0 \prec \aa \prec \bb \prec \aa + \bb \prec 2\aa + \bb \prec 3\aa + \bb \prec 3\aa + 2\bb,
\end{align*}
where $\aa$ is the short simple root and $\bb$ is the long simple root (see Figure~\ref{figure:roots}).\\

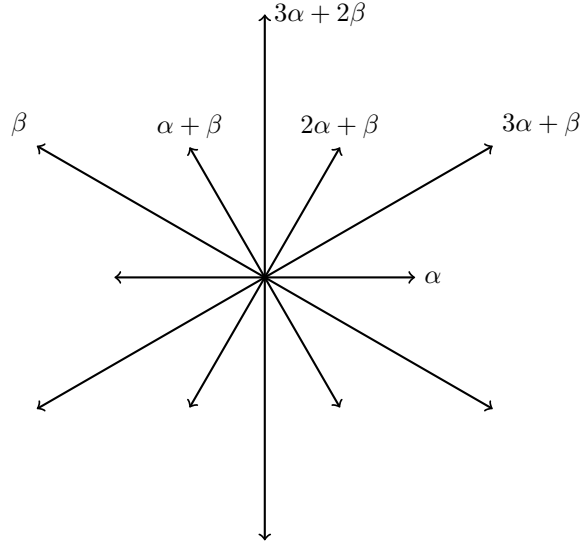
\begin{figure}
\centering
	
\begin{tikzpicture}[->]

\foreach \i in {0, 1, ..., 5} {
	\draw[thick, black] (0, 0) -- (\i*60:2);
  \draw[thick, black] (0, 0) -- (30 + \i*60:3.5);
}

\node[right] at (2, 0) {$\alpha$};
\node[above left] at (5*30:3.5) {$\beta$};
\node[right] at (3*30:3.5) {$3\alpha + 2\beta$};
\node[above right] at (1*30:3.5) {$3\alpha + \beta$};
\node[above] at (2*30:2) {$2\alpha + \beta$};
\node[above] at (4*30:2) {$\alpha + \beta$};

\end{tikzpicture}
\caption{The root system of $G_2$}
\label{figure:roots}
\end{figure}

We choose a metric $(-,-)$ in the ambient Euclidean plane so that $(\aa, \aa)=1$. Then $(\bb, \bb)=3$ and $(\aa, \bb)=-3/2$. Let ${\Phi}^{+}=\{ \aa , \bb , \aa + \bb , 2\aa + \bb , 3\aa + \bb , 3\aa + 2\bb \}$ denote the set of positive roots and $\Phi = {\Phi}^{+}\cup (- {\Phi}^{+})$ the set of all roots. The Chevalley basis of $G_2$ consists of the coroots $H_{\aa}$, $H_{\bb}$ generating a Cartan subalgebra $\mathfrak h \subset G_2$ and the root vectors $X_{\aa}$, $X_{-\aa}$, $X_{\bb}$, $X_{-\bb}$, $X_{\aa + \bb}$, $X_{-(\aa + \bb)}$, $X_{2\aa + \bb}$, $X_{-(2\aa + \bb)}$, $X_{3\aa + \bb}$, $X_{-(3\aa + \bb)}$, $X_{3\aa + 2\bb}$, $X_{-(3\aa + 2\bb)}$:
$$
G_2 = \mathfrak h\oplus \bigoplus_{\cc\in\Phi} \mathbb C X_{\cc},
$$
where $\mathfrak h=\mathbb C H_{\aa} \oplus \mathbb C H_{\bb}$.\\

The multiplication table of $G_2$ in this basis is given as follows:
\begin{align*}
& [H_{\mu},X_{\nu}] = \frac{2(\mu , \nu)}{(\mu , \mu)} \cdot X_{\nu}, \qquad [X_{\mu}, X_{-\mu}]=H_{\mu}, \quad \mu, \nu \in {\Phi}, \\ 
& [X_{\mu}, X_{\nu}] = N_{\mu,\nu}\cdot X_{\mu+\nu},\quad \mu, \nu, \mu+\nu \in \Phi, \\
& 1=N_{\bb, \aa}=N_{\bb, 3\aa+\bb}=N_{3\aa+\bb, -(3\aa+2\bb)}=N_{2\aa+\bb, -(3\aa+\bb)}=N_{2\aa+\bb, -(3\aa+2\bb)}= \\
& =N_{-(3\aa+2\bb), \aa+\bb}=N_{-(3\aa+2\bb), \bb}=N_{-(3\aa+\bb), \aa}=N_{-(\aa+\bb), \bb}, \\
& 2=N_{\aa+\bb, \aa}=N_{\aa, -(2\aa+\bb)}=N_{-(2\aa+\bb), \aa+\bb}, \\
& 3=N_{\aa, 2\aa+\bb}=N_{\aa, -(\aa+\bb)}=N_{\aa+\bb, 2\aa+\bb},\\
& N_{\mu,\nu} = - N_{\nu,\mu} = - N_{-\mu,-\nu}.
\end{align*}

Only non-zero brackets are listed. See \cite{Carter} for the choice of signs.\\

We abuse language by speaking about the action of the Weyl group of $G_2$ on the whole of $G_2$, when we are interested only in the action of the corresponding elements of $Aut(G_2)$ on the dual of the fixed Cartan subalgebra ${\mathfrak{h}}^{*}$. We also speak about reflections in ${\mathfrak{h}}^{*}$ meaning reflections in its normal real form.\\

\section{Regular subalgebras (\cite{Morozov}, \cite{Karpelevich}, \cite{Dynkin})}

A subset $\Sigma \subset \Phi$ is called \textit{closed} if $(\Sigma + \Sigma)\cap \Phi \subset \Sigma$, i.e. for any $x,y\in \Sigma$, $x+y\in \Phi$ implies $x+y\in \Sigma$.\\

Given a closed subset $\Sigma \subset \Phi$ and a vector subspace $L\subset \mathfrak h$ such that $[X_{\cc}, X_{-\cc}]\in L$ for any $\cc \in \Sigma \cap (-\Sigma)$, one can construct a subalgebra $\gg (\Sigma, L) \subset G_2$ as follows:
$$
\gg (\Sigma, L) = L\oplus \bigoplus_{\cc\in\Sigma} \mathbb C X_{\cc}.
$$

Such subalgebras are called \textit{regular} \cite{Dynkin}, \cite{Chebotarev}. Let ${\Sigma}_s = {\Sigma} \cap (- \Sigma ) = \{ \gamma \in \Sigma \; \mid \; -\gamma \in \Sigma \}$. By \cite{Dynkin}, Theorem~$6.3$, the radical of $\gg (\Sigma, L)$ is
$$
\operatorname{rad} (\gg (\Sigma, L) ) = \tilde{L}\oplus \bigoplus_{\cc\in \Sigma \setminus {\Sigma }_s} \mathbb C X_{\cc},
$$
and a Levi subalgebra
$$
\operatorname{Levi} (\gg (\Sigma, L) ) = \left( \sum\limits_{\cc\in {\Sigma }_s} \mathbb C [X_{\gamma} , X_{-\gamma}] \right) \oplus \bigoplus_{\cc\in {\Sigma }_s} \mathbb C X_{\cc},\quad \tilde{L} = \left( \sum\limits_{\cc\in {\Sigma }_s} \mathbb C [X_{\gamma} , X_{-\gamma}] \right)^{\perp}_L\subset L .
$$
It follows from the explicit enumeration of regular subalgebras of $G_2$ in Table~\ref{table:1} that we may assume $\Sigma \setminus {\Sigma }_s \subset {\Phi}^{+}$ (cf. \cite{Chebotarev}, Theorem~$1$). Hence the nilpotent in $G_2$ elements of $\operatorname{rad} (\gg (\Sigma, L) )$ form a subalgebra
$$
\bigoplus_{\cc\in \Sigma \setminus {\Sigma }_s} \mathbb C X_{\cc}.
$$

Note also that $\sum\limits_{\cc\in {\Sigma }_s} \mathbb C [X_{\gamma} , X_{-\gamma}]$ is a Cartan subalgebra of $\operatorname{Levi} (\gg (\Sigma, L) )$.\\

Two regular subalgebras are conjugate in $G_2$ if and only if the corresponding pairs $(\Sigma, L)$ lie in the same orbit of the Weyl group. See Proposition~\ref{proposition:1} below. The enumeration of all pairs $(\Sigma, L)$ satisfying the conditions above up to the action of the Weyl group (i.e. of all regular subalgebras of $G_2$ up to conjugacy) is presented in Table~\ref{table:1}. The first column of Table~\ref{table:1} contains our notation for the subalgebra $\gg (\Sigma, L)$, the second and the third columns show the closed subset $\Sigma \subset \Phi$ and the possible subspaces $L\subset \mathfrak h$. Denote $\mathfrak{u}_{\gamma}=\mathbb C X_{\gamma}\subset G_2$, $\gamma \in \Phi$.

\begin{center}
\begin{longtable}{|c|c|c|}
\caption{Regular subalgebras of $G_2$ up to conjugacy}\label{table:1}\\
\hline
\begin{tabular}{@{}c@{}}Subalgebra \\ $\gg (\Sigma, L)$\end{tabular}  & $\Sigma$ & $L$
\\ \hline\hline
	$0$ & $\varnothing$ & $0$
	\\ \hline
	$L$ & $\varnothing$ & \begin{tabular}{@{}c@{}}$L\subset \mathfrak h$, $dim (L)=1$ \\ $\alpha \cdot \beta (L)\geq 0$\end{tabular}
		\\ \hline
	$\mathfrak h$ & $\varnothing$ & $\mathfrak h$ 
	\\ \hline\hline
	${\mathfrak u}_{\alpha}=\mathbb C X_{\alpha}$ & \begin{tikzpicture}[baseline=0,scale=1]
		\draw[->] (0, 0) node[circle,fill,inner sep=1pt]{} -- (1,0) node[right] {$\alpha$};
  \end{tikzpicture}    & $0$
  \\ \hline
	${\mathfrak u}_{\alpha}\oplus L$ & \begin{tikzpicture}[baseline=0,scale=1]
	\draw[->] (0, 0) node[circle,fill,inner sep=1pt]{} -- (1,0) node[right] {$\alpha$};
  \end{tikzpicture}    & \begin{tabular}{@{}c@{}}$L\subset \mathfrak h$, $dim (L)=1$ \\ $\alpha \cdot (3\alpha +2\beta ) (L)\geq 0$\end{tabular}
  \\ \hline
	${\mathfrak u}_{\alpha}\oplus {\mathfrak h}$ & \begin{tikzpicture}[baseline=0,scale=1]
	\draw[->] (0, 0) node[circle,fill,inner sep=1pt]{} -- (1,0) node[right] {$\alpha$};
  \end{tikzpicture}    & $\mathfrak h$
  \\ \hline\hline
	${\mathfrak u}_{\beta}=\mathbb C X_{\beta}$ & \begin{tikzpicture}[baseline=0,scale=1]
		\draw[->] (0, 0) node[circle,fill,inner sep=1pt]{} -- (1.732,0) node[right] {$\beta$};
  \end{tikzpicture}    & $0$
  \\ \hline
	${\mathfrak u}_{\beta}\oplus L$ & \begin{tikzpicture}[baseline=0,scale=1]
		\draw[->] (0, 0) node[circle,fill,inner sep=1pt]{} -- (1.732,0) node[right] {$\beta$};
  \end{tikzpicture}    & \begin{tabular}{@{}c@{}}$L\subset \mathfrak h$, $dim (L)=1$ \\ $\beta \cdot (2\alpha +\beta ) (L)\geq 0$\end{tabular}
  \\ \hline
	${\mathfrak u}_{\beta}\oplus {\mathfrak h}$ & \begin{tikzpicture}[baseline=0,scale=1]
		\draw[->] (0, 0) node[circle,fill,inner sep=1pt]{} -- (1.732,0) node[right] {$\beta$};
  \end{tikzpicture}    & $\mathfrak h$
  \\ \hline\hline
	$\tilde{A_1}$ & \begin{tikzpicture}[baseline=0,scale=1]
			\draw[->] (0, 0) node[circle,fill,inner sep=1pt]{} -- (1,0) node[right] {$\alpha$};
			\draw[->] (0, 0) -- (-1,0) node[left] {$-\alpha$};
  \end{tikzpicture}    & $\mathbb C H_{\alpha}$
  \\ \hline
	$\tilde{A_1}\oplus \mathbb C H_{3\alpha+2\beta}$ & \begin{tikzpicture}[baseline=0,scale=1]
			\draw[->] (0, 0) node[circle,fill,inner sep=1pt]{} -- (1,0) node[right] {$\alpha$};
			\draw[->] (0, 0) -- (-1,0) node[left] {$-\alpha$};
  \end{tikzpicture}    & $\mathfrak h$
  \\ \hline\hline
	$A_1$	& \begin{tikzpicture}[baseline=0,scale=1]
			\draw[->] (0, 0) node[circle,fill,inner sep=1pt]{} -- (1.732,0) node[right] {$\beta$};
			\draw[->] (0, 0) -- (-1.732,0) node[left] {$-\beta$};
  \end{tikzpicture}    & $\mathbb C H_{\beta}$
  \\ \hline
	$A_1\oplus \mathbb C H_{2\alpha+\beta}$	& \begin{tikzpicture}[baseline=0,scale=1]
			\draw[->] (0, 0) node[circle,fill,inner sep=1pt]{} -- (1.732,0) node[right] {$\beta$};
			\draw[->] (0, 0) -- (-1.732,0) node[left] {$-\beta$};
  \end{tikzpicture}    & $\mathfrak h$
  \\ \hline\hline
	$L\oplus \mathfrak{u}_{\alpha}\oplus \mathfrak{u}_{3\alpha+\beta}$ & \begin{tikzpicture}[baseline=0,scale=1]
			\draw[->] (0, 0) node[circle,fill,inner sep=1pt]{} -- (1,0) node[right] {$\alpha$};
			\draw[->] (0, 0) -- (1.5,0.866) node[right] {$3\alpha + \beta$};
  \end{tikzpicture}    & $L\subset \mathfrak h$
  \\ \hline	
	$L\oplus \mathfrak{u}_{\alpha}\oplus \mathfrak{u}_{3\alpha+2\beta}$	& \begin{tikzpicture}[baseline=0,scale=1]
			\draw[->] (0, 0) node[circle,fill,inner sep=1pt]{} -- (1,0) node[right] {$\alpha$};
			\draw[->] (0, 0) -- (0,1.732) node[right] {$3\alpha + 2\beta$};
  \end{tikzpicture}    & $L\subset \mathfrak h$
  \\ \hline	\hline	
	$\mathfrak{u}_{\beta}\oplus \mathfrak{u}_{3\alpha+2\beta}$	& \begin{tikzpicture}[baseline=0,scale=1]
			\draw[->] (0, 0) node[circle,fill,inner sep=1pt]{} -- (1.732,0) node[right] {$3\alpha + 2\beta$};
			\draw[->] (0, 0) -- (0.866,1.5) node[right] {$\beta$};
  \end{tikzpicture}    & $0$
  \\ \hline
	$L\oplus \mathfrak{u}_{\beta}\oplus \mathfrak{u}_{3\alpha+2\beta}$	& \begin{tikzpicture}[baseline=0,scale=1]
			\draw[->] (0, 0) node[circle,fill,inner sep=1pt]{} -- (1.732,0) node[right] {$3\alpha + 2\beta$};
			\draw[->] (0, 0) -- (0.866,1.5) node[right] {$\beta$};
  \end{tikzpicture}    & \begin{tabular}{@{}c@{}}$L\subset \mathfrak h $, $dim (L)=1$, \\ $(\alpha + \beta)\cdot (3\alpha + \beta) (L)\geq 0$\end{tabular}
  \\ \hline
	$\mathfrak{h}\oplus \mathfrak{u}_{\beta}\oplus \mathfrak{u}_{3\alpha+2\beta}$	& \begin{tikzpicture}[baseline=0,scale=1]
			\draw[->] (0, 0) node[circle,fill,inner sep=1pt]{} -- (1.732,0) node[right] {$3\alpha + 2\beta$};
			\draw[->] (0, 0) -- (0.866,1.5) node[right] {$\beta$};
  \end{tikzpicture}    & $\mathfrak{h}$
  \\ \hline\hline
	\begin{tabular}{@{}c@{}} $\mathfrak{u}_{\alpha}$ \\ $\oplus \mathfrak{u}_{2\alpha+\beta}\oplus \mathfrak{u}_{3\alpha+\beta}$ \end{tabular}	& \begin{tikzpicture}[baseline=0,scale=1]
			\draw[->] (0, 0) node[circle,fill,inner sep=1pt]{} -- (1,0) node[right] {$\alpha$};
			\draw[->] (0, 0) -- (0.5,0.866) node[above] {$2\alpha + \beta$};
			\draw[->] (0, 0) -- (1.5,0.866) node[right] {$3\alpha + \beta$};       
  \end{tikzpicture}    & $0$
  \\ \hline	
	\begin{tabular}{@{}c@{}} $L\oplus \mathfrak{u}_{\alpha}$ \\ $\oplus \mathfrak{u}_{2\alpha+\beta}\oplus \mathfrak{u}_{3\alpha+\beta}$ \end{tabular}	& \begin{tikzpicture}[baseline=0,scale=1]
			\draw[->] (0, 0) node[circle,fill,inner sep=1pt]{} -- (1,0) node[right] {$\alpha$};
			\draw[->] (0, 0) -- (0.5,0.866) node[above] {$2\alpha + \beta$};
			\draw[->] (0, 0) -- (1.5,0.866) node[right] {$3\alpha + \beta$};       
  \end{tikzpicture}    & \begin{tabular}{@{}c@{}}$L\subset \mathfrak h$, $dim (L)=1$, \\ $(\alpha +\beta)\cdot (3\alpha +\beta)(L)\geq 0$\end{tabular}
  \\ \hline
	\begin{tabular}{@{}c@{}} $\mathfrak{h}\oplus \mathfrak{u}_{\alpha}$ \\ $\oplus \mathfrak{u}_{2\alpha+\beta}\oplus \mathfrak{u}_{3\alpha+\beta}$ \end{tabular}	& \begin{tikzpicture}[baseline=0,scale=1]
			\draw[->] (0, 0) node[circle,fill,inner sep=1pt]{} -- (1,0) node[right] {$\alpha$};
			\draw[->] (0, 0) -- (0.5,0.866) node[above] {$2\alpha + \beta$};
			\draw[->] (0, 0) -- (1.5,0.866) node[right] {$3\alpha + \beta$};       
  \end{tikzpicture}    & $\mathfrak{h}$
  \\ \hline\hline
	$\tilde{A_1}\oplus \mathfrak{u}_{3\alpha +2\beta}$	& \begin{tikzpicture}[baseline=0,scale=1]
			\draw[->] (0, 0) node[circle,fill,inner sep=1pt]{} -- (1,0) node[above] {$\alpha$};
			\draw[->] (0, 0) -- (-1,0) node[above] {$-\alpha$};
			\draw[->] (0, 0) -- (0,1.732) node[right] {$3\alpha + 2\beta$};
  \end{tikzpicture}    & $\mathbb C H_{\alpha}$
  \\ \hline	
	$\tilde{A_1}\oplus \mathbb C H_{3\alpha +2\beta}\oplus \mathfrak{u}_{3\alpha +2\beta}$	& \begin{tikzpicture}[baseline=0,scale=1]
			\draw[->] (0, 0) node[circle,fill,inner sep=1pt]{} -- (1,0) node[above] {$\alpha$};
			\draw[->] (0, 0) -- (-1,0) node[above] {$-\alpha$};
			\draw[->] (0, 0) -- (0,1.732) node[right] {$3\alpha + 2\beta$};
  \end{tikzpicture}    & $\mathfrak h$
  \\ \hline\hline
	$A_1\oplus \mathfrak{u}_{2\alpha +\beta}$ & \begin{tikzpicture}[baseline=0,scale=1]
			\draw[->] (0, 0) node[circle,fill,inner sep=1pt]{} -- (1.732,0) node[above] {$-\beta$};
			\draw[->] (0, 0) -- (-1.732,0) node[above] {$\beta$};
			\draw[->] (0, 0) -- (0,1) node[above] {$2\alpha + \beta$};
  \end{tikzpicture}    & $\mathbb C H_{\beta}$
  \\ \hline
	$A_1\oplus \mathbb C H_{2\alpha +\beta} \oplus \mathfrak{u}_{2\alpha +\beta}$ & \begin{tikzpicture}[baseline=0,scale=1]
			\draw[->] (0, 0) node[circle,fill,inner sep=1pt]{} -- (1.732,0) node[above] {$-\beta$};
			\draw[->] (0, 0) -- (-1.732,0) node[above] {$\beta$};
			\draw[->] (0, 0) -- (0,1) node[above] {$2\alpha + \beta$};
  \end{tikzpicture}    & $\mathfrak h$
  \\ \hline\hline
	\begin{tabular}{@{}c@{}} $\mathfrak{u}_{\beta}$ \\ $\oplus \mathfrak{u}_{\alpha+\beta}\oplus \mathfrak{u}_{3\alpha+2\beta}$ \end{tabular} & \begin{tikzpicture}[baseline=0,scale=1]
			\draw[->] (0, 0) node[circle,fill,inner sep=1pt]{} -- (0,1) node[above] {$\alpha + \beta$};
			\draw[->] (0, 0) -- (0.866,1.5) node[right] {$3\alpha + 2\beta$};
			\draw[->] (0, 0) -- (-0.866,1.5) node[left] {$\beta$};            
	\end{tikzpicture}    & $0$
  \\ \hline	
	\begin{tabular}{@{}c@{}} $L\oplus \mathfrak{u}_{\beta}$ \\ $\oplus \mathfrak{u}_{\alpha+\beta}\oplus \mathfrak{u}_{3\alpha+2\beta}$ \end{tabular} & \begin{tikzpicture}[baseline=0,scale=1]
			\draw[->] (0, 0) node[circle,fill,inner sep=1pt]{} -- (0,1) node[above] {$\alpha + \beta$};
			\draw[->] (0, 0) -- (0.866,1.5) node[right] {$3\alpha + 2\beta$};
			\draw[->] (0, 0) -- (-0.866,1.5) node[left] {$\beta$};            
	\end{tikzpicture}    & \begin{tabular}{@{}c@{}}$L\subset \mathfrak h$, $dim (L)=1$, \\ $(\alpha +\beta ) \cdot (3\alpha +\beta)(L)\geq 0$\end{tabular}
  \\ \hline	
	\begin{tabular}{@{}c@{}} $\mathfrak{h}\oplus \mathfrak{u}_{\beta}$ \\ $\oplus \mathfrak{u}_{\alpha+\beta}\oplus \mathfrak{u}_{3\alpha+2\beta}$ \end{tabular} & \begin{tikzpicture}[baseline=0,scale=1]
			\draw[->] (0, 0) node[circle,fill,inner sep=1pt]{} -- (0,1) node[above] {$\alpha + \beta$};
			\draw[->] (0, 0) -- (0.866,1.5) node[right] {$3\alpha + 2\beta$};
			\draw[->] (0, 0) -- (-0.866,1.5) node[left] {$\beta$};            
	\end{tikzpicture}    & $\mathfrak{h}$
  \\ \hline	\hline	
	\begin{tabular}{@{}c@{}} $\mathfrak{u}_{\beta}$ \\ $\oplus \mathfrak{u}_{3\alpha+\beta}\oplus \mathfrak{u}_{3\alpha+2\beta}$ \end{tabular} & \begin{tikzpicture}[baseline=0,scale=1]
			\draw[->] (0, 0) node[circle,fill,inner sep=1pt]{} -- (1.732,0) node[right] {$3\alpha + \beta$};
			\draw[->] (0, 0) -- (0.866,1.5) node[right] {$3\alpha + 2\beta$};
			\draw[->] (0, 0) -- (-0.866,1.5) node[left] {$\beta$};                 
  \end{tikzpicture}    & $0$
  \\ \hline	
	\begin{tabular}{@{}c@{}} $L\oplus \mathfrak{u}_{\beta}$ \\ $\oplus \mathfrak{u}_{3\alpha+\beta}\oplus \mathfrak{u}_{3\alpha+2\beta}$ \end{tabular} & \begin{tikzpicture}[baseline=0,scale=1]
			\draw[->] (0, 0) node[circle,fill,inner sep=1pt]{} -- (1.732,0) node[right] {$3\alpha + \beta$};
			\draw[->] (0, 0) -- (0.866,1.5) node[right] {$3\alpha + 2\beta$};
			\draw[->] (0, 0) -- (-0.866,1.5) node[left] {$\beta$};                 
  \end{tikzpicture}    & \begin{tabular}{@{}c@{}}$L\subset \mathfrak h$, $dim (L)=1$, \\ $\alpha \cdot (3\alpha +2\beta)(L)\geq 0$\end{tabular}
  \\ \hline
	\begin{tabular}{@{}c@{}} $\mathfrak{h}\oplus \mathfrak{u}_{\beta}$ \\ $\oplus \mathfrak{u}_{3\alpha+\beta}\oplus \mathfrak{u}_{3\alpha+2\beta}$ \end{tabular} & \begin{tikzpicture}[baseline=0,scale=1]
			\draw[->] (0, 0) node[circle,fill,inner sep=1pt]{} -- (1.732,0) node[right] {$3\alpha + \beta$};
			\draw[->] (0, 0) -- (0.866,1.5) node[right] {$3\alpha + 2\beta$};
			\draw[->] (0, 0) -- (-0.866,1.5) node[left] {$\beta$};                 
  \end{tikzpicture}    & $\mathfrak{h}$
  \\ \hline\hline
	\begin{tabular}{@{}c@{}} $L\oplus \mathfrak{u}_{\alpha}$ \\ $\oplus \mathfrak{u}_{3\alpha+\beta}\oplus \mathfrak{u}_{3\alpha+2\beta}$ \end{tabular} & \begin{tikzpicture}[baseline=0,scale=1]
			\draw[->] (0, 0) node[circle,fill,inner sep=1pt]{} -- (1,0) node[right] {$\alpha$};
			\draw[->] (0, 0) -- (0,1.732) node[above] {$3\alpha + 2\beta$};
			\draw[->] (0, 0) -- (1.5,0.866) node[above] {$3\alpha + \beta$};
  \end{tikzpicture}    & $L\subset \mathfrak h$
	\\ \hline
	\begin{tabular}{@{}c@{}} $L\oplus \mathfrak{u}_{\alpha}\oplus \mathfrak{u}_{2\alpha+\beta}$ \\ $\oplus \mathfrak{u}_{3\alpha+\beta}\oplus \mathfrak{u}_{3\alpha+2\beta}$ \end{tabular} & \begin{tikzpicture}[baseline=0,scale=1]
			\draw[->] (0, 0) node[circle,fill,inner sep=1pt]{} -- (1,0) node[right] {$\alpha$};
			\draw[->] (0, 0) -- (0,1.732) node[above] {$3\alpha + 2\beta$};
			\draw[->] (0, 0) -- (1.5,0.866) node[right] {$3\alpha + \beta$};
			\draw[->] (0, 0) -- (0.5,0.866) node[above right] {$2\alpha + \beta$};
  \end{tikzpicture}    & $L\subset \mathfrak h$
  \\ \hline
	\begin{tabular}{@{}c@{}} $L\oplus \mathfrak{u}_{\beta}\oplus \mathfrak{u}_{2\alpha+\beta}$ \\ $\oplus \mathfrak{u}_{3\alpha+\beta}\oplus \mathfrak{u}_{3\alpha+2\beta}$ \end{tabular} & \begin{tikzpicture}[baseline=0,scale=1]
			\draw[->] (0, 0) node[circle,fill,inner sep=1pt]{} -- (1.732,0) node[right] {$3\alpha + \beta$};
			\draw[->] (0, 0) -- (0.866,1.5) node[right] {$3\alpha + 2\beta$};
			\draw[->] (0, 0) -- (-0.866,1.5) node[left] {$\beta$};
			\draw[->] (0, 0) -- (0.866,0.5) node[right] {$2\alpha + \beta$};
		\end{tikzpicture}    & $L\subset \mathfrak h$
  \\ \hline	\hline	
	\begin{tabular}{@{}c@{}} $A_1$ \\ $\oplus \mathfrak{u}_{3\alpha +\beta} \oplus \mathfrak{u}_{3\alpha +2\beta}$ \end{tabular} & \begin{tikzpicture}[baseline=0,scale=1]
			\draw[->] (0, 0) node[circle,fill,inner sep=1pt]{} -- (1.732,0) node[right] {$-\beta$};
			\draw[->] (0, 0) -- (0.866,1.5) node[right] {$3\alpha + \beta$};
			\draw[->] (0, 0) -- (-0.866,1.5) node[left] {$3\alpha + 2\beta$};
			\draw[->] (0, 0) -- (-1.732,0) node[left] {$\beta$};
  \end{tikzpicture}    & $\mathbb C H_{\beta}$
	\\ \hline	
	\begin{tabular}{@{}c@{}} $A_1\oplus \mathbb C H_{2\alpha +\beta}$ \\ $ \oplus \mathfrak{u}_{3\alpha +\beta} \oplus \mathfrak{u}_{3\alpha +2\beta}$ \end{tabular} & \begin{tikzpicture}[baseline=0,scale=1]
			\draw[->] (0, 0) node[circle,fill,inner sep=1pt]{} -- (1.732,0) node[right] {$-\beta$};
			\draw[->] (0, 0) -- (0.866,1.5) node[right] {$3\alpha + \beta$};
			\draw[->] (0, 0) -- (-0.866,1.5) node[left] {$3\alpha + 2\beta$};
			\draw[->] (0, 0) -- (-1.732,0) node[left] {$\beta$};
  \end{tikzpicture}    & $\mathfrak h$
	\\ \hline\hline	
	\begin{tabular}{@{}c@{}} $\mathfrak{u}_{\alpha}$ \\ $\oplus \mathfrak{u}_{\alpha+\beta}\oplus \mathfrak{u}_{2\alpha+\beta}$ \\ $\oplus \mathfrak{u}_{3\alpha+\beta}\oplus \mathfrak{u}_{3\alpha+2\beta}$ \end{tabular} & \begin{tikzpicture}[baseline=0,scale=1]
			\draw[->] (0, 0) node[circle,fill,inner sep=1pt]{} -- (1,0) node[right] {$\alpha$};
			\draw[->] (0, 0) -- (0,1.732) node[above] {$3\alpha + 2\beta$};
			\draw[->] (0, 0) -- (1.5,0.866) node[right] {$3\alpha + \beta$};
			\draw[->] (0, 0) -- (0.5,0.866) node[above right] {$2\alpha + \beta$};
			\draw[->] (0, 0) -- (-0.5,0.866) node[left] {$\alpha + \beta$};   
  \end{tikzpicture}    & $0$
  \\ \hline	
	\begin{tabular}{@{}c@{}} $L\oplus \mathfrak{u}_{\alpha}$ \\ $\oplus \mathfrak{u}_{\alpha+\beta}\oplus \mathfrak{u}_{2\alpha+\beta}$ \\ $\oplus \mathfrak{u}_{3\alpha+\beta}\oplus \mathfrak{u}_{3\alpha+2\beta}$ \end{tabular} & \begin{tikzpicture}[baseline=0,scale=1]
			\draw[->] (0, 0) node[circle,fill,inner sep=1pt]{} -- (1,0) node[right] {$\alpha$};
			\draw[->] (0, 0) -- (0,1.732) node[above] {$3\alpha + 2\beta$};
			\draw[->] (0, 0) -- (1.5,0.866) node[right] {$3\alpha + \beta$};
			\draw[->] (0, 0) -- (0.5,0.866) node[above right] {$2\alpha + \beta$};
			\draw[->] (0, 0) -- (-0.5,0.866) node[left] {$\alpha + \beta$};   
  \end{tikzpicture}    & \begin{tabular}{@{}c@{}}$L\subset \mathfrak h$, $dim (L)=1$, \\ $\beta \cdot (2\alpha +\beta)(L)\geq 0$\end{tabular}
  \\ \hline	
	\begin{tabular}{@{}c@{}} $\mathfrak{h}\oplus \mathfrak{u}_{\alpha}$ \\ $\oplus \mathfrak{u}_{\alpha+\beta}\oplus \mathfrak{u}_{2\alpha+\beta}$ \\ $\oplus \mathfrak{u}_{3\alpha+\beta}\oplus \mathfrak{u}_{3\alpha+2\beta}$ \end{tabular} & \begin{tikzpicture}[baseline=0,scale=1]
			\draw[->] (0, 0) node[circle,fill,inner sep=1pt]{} -- (1,0) node[right] {$\alpha$};
			\draw[->] (0, 0) -- (0,1.732) node[above] {$3\alpha + 2\beta$};
			\draw[->] (0, 0) -- (1.5,0.866) node[right] {$3\alpha + \beta$};
			\draw[->] (0, 0) -- (0.5,0.866) node[above right] {$2\alpha + \beta$};
			\draw[->] (0, 0) -- (-0.5,0.866) node[left] {$\alpha + \beta$};   
  \end{tikzpicture}    & $\mathfrak{h}$
  \\ \hline	 \hline	
	\begin{tabular}{@{}c@{}} $\mathfrak{u}_{\beta}$ \\ $\oplus \mathfrak{u}_{\alpha+\beta}\oplus \mathfrak{u}_{2\alpha+\beta}$ \\ $\oplus \mathfrak{u}_{3\alpha+\beta}\oplus \mathfrak{u}_{3\alpha+2\beta}$ \end{tabular} & \begin{tikzpicture}[baseline=0,scale=1]
			\draw[->] (0, 0) node[circle,fill,inner sep=1pt]{} -- (1.732,0) node[right] {$3\alpha + \beta$};
			\draw[->] (0, 0) -- (0.866,1.5) node[right] {$3\alpha + 2\beta$};
			\draw[->] (0, 0) -- (-0.866,1.5) node[left] {$\beta$};
			\draw[->] (0, 0) -- (0,1) node[above] {$\alpha + \beta$};
			\draw[->] (0, 0) -- (0.866,0.5) node[right] {$2\alpha + \beta$};    
  \end{tikzpicture}    & $0$
	  \\ \hline	
		\begin{tabular}{@{}c@{}} $L\oplus \mathfrak{u}_{\beta}$ \\ $\oplus \mathfrak{u}_{\alpha+\beta}\oplus \mathfrak{u}_{2\alpha+\beta}$ \\ $\oplus \mathfrak{u}_{3\alpha+\beta}\oplus \mathfrak{u}_{3\alpha+2\beta}$ \end{tabular} & \begin{tikzpicture}[baseline=0,scale=1]
			\draw[->] (0, 0) node[circle,fill,inner sep=1pt]{} -- (1.732,0) node[right] {$3\alpha + \beta$};
			\draw[->] (0, 0) -- (0.866,1.5) node[right] {$3\alpha + 2\beta$};
			\draw[->] (0, 0) -- (-0.866,1.5) node[left] {$\beta$};
			\draw[->] (0, 0) -- (0,1) node[above] {$\alpha + \beta$};
			\draw[->] (0, 0) -- (0.866,0.5) node[right] {$2\alpha + \beta$};    
  \end{tikzpicture}    & \begin{tabular}{@{}c@{}}$L\subset \mathfrak h$, $dim (L)=1$, \\ $\alpha \cdot (3\alpha +2\beta)(L)\geq 0$\end{tabular}
	  \\ \hline	
		\begin{tabular}{@{}c@{}} $\mathfrak{h}\oplus \mathfrak{u}_{\beta}$ \\ $\oplus \mathfrak{u}_{\alpha+\beta}\oplus \mathfrak{u}_{2\alpha+\beta}$ \\ $\oplus \mathfrak{u}_{3\alpha+\beta}\oplus \mathfrak{u}_{3\alpha+2\beta}$ \end{tabular} & \begin{tikzpicture}[baseline=0,scale=1]
			\draw[->] (0, 0) node[circle,fill,inner sep=1pt]{} -- (1.732,0) node[right] {$3\alpha + \beta$};
			\draw[->] (0, 0) -- (0.866,1.5) node[right] {$3\alpha + 2\beta$};
			\draw[->] (0, 0) -- (-0.866,1.5) node[left] {$\beta$};
			\draw[->] (0, 0) -- (0,1) node[above] {$\alpha + \beta$};
			\draw[->] (0, 0) -- (0.866,0.5) node[right] {$2\alpha + \beta$};    
  \end{tikzpicture}    & $\mathfrak{h}$
	  \\ \hline	\hline	
	\begin{tabular}{@{}c@{}} $A_1$ \\ $\oplus \mathfrak{u}_{2\alpha +\beta} \oplus \mathfrak{u}_{3\alpha +\beta}$ \\ $ \oplus \mathfrak{u}_{3\alpha +2\beta}$ \end{tabular} & \begin{tikzpicture}[baseline=0,scale=1]
			\draw[->] (0, 0) node[circle,fill,inner sep=1pt]{} -- (1.732,0) node[right] {$-\beta$};
			\draw[->] (0, 0) -- (-1.732,0) node[left] {$\beta$};
			\draw[->] (0, 0) -- (0.866,1.5) node[right] {$3\alpha + \beta$};
			\draw[->] (0, 0) -- (-0.866,1.5) node[left] {$3\alpha + 2\beta$};
			\draw[->] (0, 0) -- (0,1) node[above] {$2\alpha + \beta$};
  \end{tikzpicture}    & $\mathbb C H_{\beta}$
  \\ \hline	
	\begin{tabular}{@{}c@{}} $A_1 \oplus \mathbb C H_{2\alpha +\beta}$ \\ $\oplus \mathfrak{u}_{2\alpha +\beta} \oplus \mathfrak{u}_{3\alpha +\beta} $ \\ $\oplus \mathfrak{u}_{3\alpha +2\beta}$ \end{tabular} & \begin{tikzpicture}[baseline=0,scale=1]
			\draw[->] (0, 0) node[circle,fill,inner sep=1pt]{} -- (1.732,0) node[right] {$-\beta$};
			\draw[->] (0, 0) -- (-1.732,0) node[left] {$\beta$};
			\draw[->] (0, 0) -- (0.866,1.5) node[right] {$3\alpha + \beta$};
			\draw[->] (0, 0) -- (-0.866,1.5) node[left] {$3\alpha + 2\beta$};
			\draw[->] (0, 0) -- (0,1) node[above] {$2\alpha + \beta$};
  \end{tikzpicture}    & $\mathfrak h$
  \\ \hline	\hline
	$A_1+\tilde{A_1}=G_2(\beta)$	& \begin{tikzpicture}[baseline=0,scale=1]
			\draw[->] (0, 0) node[circle,fill,inner sep=1pt]{} -- (1.732,0) node[above] {$-(3\alpha + 2\beta)$};
			\draw[->] (0, 0) -- (-1.732,0) node[above] {$3\alpha + 2\beta$};
			\draw[->] (0, 0) -- (0,1) node[right] {$\alpha$};
			\draw[->] (0, 0) -- (0,-1) node[right] {$-\alpha$};
  \end{tikzpicture}    & $\mathfrak h$
  \\ \hline		
	$A_2=G_2(\alpha)$ & \begin{tikzpicture}[baseline=0,scale=1]
			\draw[->] (0, 0) node[circle,fill,inner sep=1pt]{} -- (1.732,0) node[right] {$-\beta$};
			\draw[->] (0, 0) -- (-1.732,0) node[left] {$\beta$};
			\draw[->] (0, 0) -- (0.866,1.5) node[right] {$3\alpha + \beta$};
			\draw[->] (0, 0) -- (-0.866,1.5) node[left] {$3\alpha + 2\beta$};
			\draw[->] (0, 0) -- (0.866,-1.5) node[right] {$-(3\alpha + 2\beta)$};
			\draw[->] (0, 0) -- (-0.866,-1.5) node[left] {$-(3\alpha + \beta)$};
  \end{tikzpicture}    & $\mathfrak h$
  \\ \hline	\hline	
	\begin{tabular}{@{}c@{}} $\mathfrak{u}_{\alpha}\oplus \mathfrak{u}_{\beta}$ \\ $\oplus \mathfrak{u}_{\alpha+\beta}\oplus \mathfrak{u}_{2\alpha+\beta} $ \\ $\oplus \mathfrak{u}_{3\alpha+\beta}\oplus \mathfrak{u}_{3\alpha+2\beta}$ \end{tabular} & \begin{tikzpicture}[baseline=0,scale=1]
			\draw[->] (0, 0) node[circle,fill,inner sep=1pt]{} -- (1,0) node[right] {$\alpha$};
			\draw[->] (0, 0) -- (0,1.732) node[above] {$3\alpha + 2\beta$};
			\draw[->] (0, 0) -- (1.5,0.866) node[right] {$3\alpha + \beta$};
			\draw[->] (0, 0) -- (0.5,0.866) node[above right] {$2\alpha + \beta$};
			\draw[->] (0, 0) -- (-0.5,0.866) node[above] {$\alpha + \beta$};
			\draw[->] (0, 0) -- (-1.5,0.866) node[left] {$\beta$};
  \end{tikzpicture}    & $0$
	\\ \hline	
	\begin{tabular}{@{}c@{}} $L\oplus \mathfrak{u}_{\alpha}\oplus \mathfrak{u}_{\beta}$ \\ $\oplus \mathfrak{u}_{\alpha+\beta}\oplus \mathfrak{u}_{2\alpha+\beta} $ \\ $\oplus \mathfrak{u}_{3\alpha+\beta}\oplus \mathfrak{u}_{3\alpha+2\beta}$ \end{tabular} & \begin{tikzpicture}[baseline=0,scale=1]
			\draw[->] (0, 0) node[circle,fill,inner sep=1pt]{} -- (1,0) node[right] {$\alpha$};
			\draw[->] (0, 0) -- (0,1.732) node[above] {$3\alpha + 2\beta$};
			\draw[->] (0, 0) -- (1.5,0.866) node[right] {$3\alpha + \beta$};
			\draw[->] (0, 0) -- (0.5,0.866) node[above right] {$2\alpha + \beta$};
			\draw[->] (0, 0) -- (-0.5,0.866) node[above] {$\alpha + \beta$};
			\draw[->] (0, 0) -- (-1.5,0.866) node[left] {$\beta$};
  \end{tikzpicture}    & $L\subset \mathfrak h$, $dim(L)=1$
	\\ \hline	
	$\mathfrak{b}=\mathfrak{h}\oplus \bigoplus\limits_{\gamma\in {\Phi}^{+}}\mathbb C X_{\gamma}$ & \begin{tikzpicture}[baseline=0,scale=1]
			\draw[->] (0, 0) node[circle,fill,inner sep=1pt]{} -- (1,0) node[right] {$\alpha$};
			\draw[->] (0, 0) -- (0,1.732) node[above] {$3\alpha + 2\beta$};
			\draw[->] (0, 0) -- (1.5,0.866) node[right] {$3\alpha + \beta$};
			\draw[->] (0, 0) -- (0.5,0.866) node[above right] {$2\alpha + \beta$};
			\draw[->] (0, 0) -- (-0.5,0.866) node[above] {$\alpha + \beta$};
			\draw[->] (0, 0) -- (-1.5,0.866) node[left] {$\beta$};
  \end{tikzpicture}    & $\mathfrak h$
	\\ \hline	 \hline	
	\begin{tabular}{@{}c@{}} $\tilde{A_1}\oplus \mathfrak{u}_{\beta}$ \\ $\oplus \mathfrak{u}_{\alpha +\beta} \oplus \mathfrak{u}_{2\alpha +\beta}$ \\ $\oplus \mathfrak{u}_{3\alpha +\beta} \oplus \mathfrak{u}_{3\alpha +2\beta}$ \end{tabular} & \begin{tikzpicture}[baseline=0,scale=1]
			\draw[->] (0, 0) node[circle,fill,inner sep=1pt]{} -- (1,0) node[right] {$\alpha$};
			\draw[->] (0, 0) -- (-1,0) node[left] {$-\alpha$};
			\draw[->] (0, 0) -- (0,1.732) node[above] {$3\alpha + 2\beta$};
			\draw[->] (0, 0) -- (1.5,0.866) node[right] {$3\alpha + \beta$};
			\draw[->] (0, 0) -- (0.5,0.866) node[above right] {$2\alpha + \beta$};
			\draw[->] (0, 0) -- (-0.5,0.866) node[above] {$\alpha + \beta$};
			\draw[->] (0, 0) -- (-1.5,0.866) node[left] {$\beta$};
  \end{tikzpicture}    & $\mathbb C H_{\alpha}$
	\\ \hline
	$G_2[\beta]$ & \begin{tikzpicture}[baseline=0,scale=1]
			\draw[->] (0, 0) node[circle,fill,inner sep=1pt]{} -- (1,0) node[right] {$\alpha$};
			\draw[->] (0, 0) -- (-1,0) node[left] {$-\alpha$};
			\draw[->] (0, 0) -- (0,1.732) node[above] {$3\alpha + 2\beta$};
			\draw[->] (0, 0) -- (1.5,0.866) node[right] {$3\alpha + \beta$};
			\draw[->] (0, 0) -- (0.5,0.866) node[above right] {$2\alpha + \beta$};
			\draw[->] (0, 0) -- (-0.5,0.866) node[above] {$\alpha + \beta$};
			\draw[->] (0, 0) -- (-1.5,0.866) node[left] {$\beta$};
  \end{tikzpicture}    & $\mathfrak h$
	\\ \hline\hline
	\begin{tabular}{@{}c@{}} $A_1\oplus \mathfrak{u}_{\alpha}$ \\ $\oplus \mathfrak{u}_{\alpha +\beta} \oplus \mathfrak{u}_{2\alpha +\beta} $ \\ $\oplus \mathfrak{u}_{3\alpha +\beta} \oplus \mathfrak{u}_{3\alpha +2\beta}$ \end{tabular} & \begin{tikzpicture}[baseline=0,scale=1]
			\draw[->] (0, 0) node[circle,fill,inner sep=1pt]{} -- (1.732,0) node[right] {$-\beta$};
			\draw[->] (0, 0) -- (-1.732,0) node[left] {$\beta$};
			\draw[->] (0, 0) -- (0,1) node[above] {$2\alpha + \beta$};
			\draw[->] (0, 0) -- (0.866,0.5) node[above right] {$\alpha$};
			\draw[->] (0, 0) -- (-0.866,0.5) node[above left] {$\alpha + \beta$};
			\draw[->] (0, 0) -- (0.866,1.5) node[above] {$3\alpha + \beta$};
			\draw[->] (0, 0) -- (-0.866,1.5) node[above] {$3\alpha + 2\beta$};
  \end{tikzpicture}    & $\mathbb C H_{\beta}$
	\\ \hline
	$G_2[\alpha]$ & \begin{tikzpicture}[baseline=0,scale=1]
			\draw[->] (0, 0) node[circle,fill,inner sep=1pt]{} -- (1.732,0) node[right] {$-\beta$};
			\draw[->] (0, 0) -- (-1.732,0) node[left] {$\beta$};
			\draw[->] (0, 0) -- (0,1) node[above] {$2\alpha + \beta$};
			\draw[->] (0, 0) -- (0.866,0.5) node[above right] {$\alpha$};
			\draw[->] (0, 0) -- (-0.866,0.5) node[above left] {$\alpha + \beta$};
			\draw[->] (0, 0) -- (0.866,1.5) node[above] {$3\alpha + \beta$};
			\draw[->] (0, 0) -- (-0.866,1.5) node[above] {$3\alpha + 2\beta$};
  \end{tikzpicture}    & $\mathfrak h$ 
	\\ \hline	 \hline	
	$G_2$ & $\Phi $   & $\mathfrak h $
  \\ \hline
\end{longtable}
\end{center}

\begin{proposition}\label{proposition:1}
The subalgebras listed in Table~\ref{table:1} are not pairwise conjugate in $G_2$.
\end{proposition}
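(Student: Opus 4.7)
The plan is to show that no two distinct entries of Table~\ref{table:1} produce subalgebras that are conjugate in $G_2$.

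First I would reduce to the Weyl group action. If an inner automorphism $\phi$ of $G_2$ sends $\mathfrak{g}(\Sigma_1,L_1)$ to $\mathfrak{g}(\Sigma_2,L_2)$, note that each $L_i$ is a maximal toral subalgebra of $\mathfrak{g}(\Sigma_i,L_i)$ (the Cartan of the Levi is contained in $L_i$ by assumption, and the nilpotent radical contributes no semisimple element). Composing $\phi$ with an inner automorphism of $G_2$ that preserves $\mathfrak{g}(\Sigma_2,L_2)$ and carries $\phi(L_1)$ back to $L_2$ (possible by conjugacy of maximal tori in the connected algebraic subgroup of $G_2$ with Lie algebra $\mathfrak{g}(\Sigma_2,L_2)$), we may assume $\phi$ preserves $\mathfrak{h}$. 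Then $\phi$ represents an element $w \in N_{G_2}(\mathfrak{h})/Z_{G_2}(\mathfrak{h}) \cong W$ carrying $(\Sigma_1,L_1)$ onto $(\Sigma_2,L_2)$. It therefore suffices to prove that the tabulated pairs lie in pairwise distinct $W$-orbits.

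Next I would stratify Table~\ref{table:1} by the discrete $W$-invariants of $(\Sigma,L)$: $|\Sigma|$, $\dim L$, the cardinality $|\Sigma_s|$, the isomorphism type of the Levi together with the short-versus-long character of its simple summands (distinguishing $A_1$ from $\tilde A_1$), and the dimension $|\Sigma\setminus\Sigma_s|$ of the nilpotent radical. These invariants separate most rows at a glance. Within each remaining block I would compare the closed subsets $\Sigma$ under the dihedral group $W(G_2)$ of order $12$, which acts transitively on the six short and six long roots separately; a direct inspection of the orbits shows that the root patterns depicted in the table lie in distinct $W$-orbits (for instance $\{\alpha, 3\alpha+\beta\}$ and $\{\alpha, 3\alpha+2\beta\}$ differ in the angle between their members and in whether their sum is a root).

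The hard part will be the continuous parameter $L$. For each $\Sigma$ admitting a one-parameter family of admissible $L$, I must identify the stabilizer $\text{Stab}_W(\Sigma)\subset W$ and verify that the inequality printed in the third column (such as $(\alpha+\beta)\cdot(3\alpha+\beta)(L)\ge 0$) carves out exactly a fundamental domain for that stabilizer's action on the projective line of one-dimensional subspaces of $\mathfrak{h}$. In each such row the two linear forms appearing in the inequality will turn out to be the two images of a single root under $\text{Stab}_W(\Sigma)$, so their product is a $\text{Stab}_W(\Sigma)$-semi-invariant whose sign is flipped by the nontrivial stabilizer element; consequently the closed half-space it defines is a fundamental domain, and each admissible $W$-orbit of $L$ has a unique representative. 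This case-by-case verification for the rows carrying a nontrivial $L$-condition is the substantive computation; the remainder of the proof is recognition by invariants.
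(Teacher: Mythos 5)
Your opening reduction --- ``we may assume $\phi$ preserves $\mathfrak h$, hence acts through the Weyl group'' --- is the crux, and it does not go through for most rows of Table~\ref{table:1}. It is fine when $L=\mathfrak h$, but many entries have $\dim L\le 1$, and a substantial number (e.g.\ $\mathbb C X_\alpha\oplus\mathbb C X_{3\alpha+\beta}$ versus $\mathbb C X_\alpha\oplus\mathbb C X_{3\alpha+2\beta}$, and the nilpotent subalgebras of dimensions $3$ through $6$) have $L=0$ and consist entirely of nilpotent elements, so they contain no toral subalgebra at all: the condition $\phi(L_1)=L_2$ is vacuous and gives no control over $\phi(\mathfrak h)$. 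Even when $\dim L=1$, arranging $\phi(L_1)=L_2$ only places $\phi(\mathfrak h)$ and $\mathfrak h$ inside the reductive centralizer $Z_{G_2}(L_2)$; the inner automorphism of that centralizer needed to move $\phi(\mathfrak h)$ back to $\mathfrak h$ fixes $L_2$ but has no reason to preserve $\bigoplus_{\gamma\in\Sigma_2}\mathbb C X_\gamma$, so after correcting you no longer have a map of pairs $(\Sigma_1,L_1)\mapsto(\Sigma_2,L_2)$ and you do not land in $N_{G_2}(\mathfrak h)/Z_{G_2}(\mathfrak h)$. (A smaller issue: for generic $L$ the subalgebra $\mathfrak g(\Sigma,L)$ is not algebraic, so conjugacy of maximal tori in ``the connected algebraic subgroup with Lie algebra $\mathfrak g(\Sigma,L)$'' is not available as stated.) Consequently, checking that the $\Sigma$'s lie in distinct $W$-orbits does not suffice: a priori an inner automorphism that does not normalize $\mathfrak h$ could still identify two such subalgebras.

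This is precisely the difficulty the paper's proof is organized around. It proceeds dimension by dimension and, for the rows your reduction misses, separates subalgebras by conjugation invariants that make no reference to $\mathfrak h$: ranks and Jordan types of $\operatorname{ad}_x$ (e.g.\ $\operatorname{rk}(\operatorname{ad}_{aX_\alpha+bX_{3\alpha+2\beta}})=10$ for $ab\neq 0$ while $\operatorname{rk}(\operatorname{ad}_{X_\alpha})=8$, which separates $\mathbb C X_\alpha\oplus\mathbb C X_{3\alpha+2\beta}$ from $\mathbb C X_\alpha\oplus\mathbb C X_{3\alpha+\beta}$), vanishing of $\operatorname{ad}_x^3$ on suitable root vectors, derived ideals, and radicals versus Levi parts; where such invariants do not settle the matter, it writes out the constraints that bracket preservation imposes on a putative conjugation $s$ and either reaches a contradiction or forces $s(\mathfrak h)=\mathfrak h$, and only at that point invokes the Weyl group together with the inequalities on $L$. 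Your final paragraph --- that each inequality in the third column cuts out a fundamental domain for $\operatorname{Stab}_W(\Sigma)$ acting on lines in $\mathfrak h$ --- is the correct picture for that last step, but it becomes applicable only after the hard work of reducing a given conjugation to one normalizing $\mathfrak h$, which your proposal assumes rather than proves.
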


\begin{proof}
We will consider each dimension separately. All subalgebras are considered up to conjugacy in $G_2$. Note that if regular subalgebras $\mathfrak{g} (\Sigma , L)$ and $\mathfrak{g} ({\Sigma }', L')$ in $G_2$ are conjugate, then their radicals and Levi subalgebras are also conjugate. In particular, $dim(L)=dim(L')$ and $card (\Sigma )=card (\Sigma ')$.\\

The claim is clear when $L=\mathfrak{h}$, but we will include these cases for completeness.\\

There are three regular subalgebras of dimension $1$:
$$
{\mathfrak u}_{\alpha},\quad {\mathfrak u}_{\beta},\quad L.
$$

None of them can be conjugate in $G_2$ to another, because $X_{\alpha}$, $X_{\beta}$ are not semisimple and their matrices in the adjoint representation have different Jordan blocks.\\

The Cartan subalgebra ${\mathfrak h}$ is the only regular subalgebra of dimension $2$ all of whose elements are semisimple in $G_2$.\\

There are three regular subalgebras of dimension $2$ all of whose elements are nilpotent in $G_2$:
$$
\mathbb C X_{\alpha} \oplus \mathbb C X_{3\alpha + \beta},\quad \mathbb C X_{\beta} \oplus \mathbb C X_{3\alpha + 2\beta},\quad \mathbb C X_{\alpha} \oplus \mathbb C X_{3\alpha + 2\beta}. 
$$

There are two types of regular subalgebras of dimension $2$ which contain both nilpotent and semisimple non-zero elements. Each of them is parametrized by a vector subspace $L\subset \mathfrak h$ of dimension $1$:
$$
{\mathfrak u}_{\alpha}\oplus L, \quad {\mathfrak u}_{\beta}\oplus L.
$$

Note that $X_{\beta}\in G_2$ can not be conjugate to an element of the form $x=aX_{\alpha}+bX_{\gamma}$ with $\gamma \in \{ 3\alpha +\beta , 3\alpha +2\beta \}$ and $a\neq 0$, because $rk(ad_{x})\geq 8>6=rk(ad_{X_{\beta}})$. Similarly, $X_{\alpha}\in G_2$ can not be conjugate to an element of the form $y=aX_{3\alpha + 2\beta}+bX_{\alpha}$ with $ab\neq 0$, because $rk(ad_{y})=10>8=rk(ad_{X_{\alpha}})$. Hence $\mathbb C X_{\beta} \oplus \mathbb C X_{3\alpha + 2\beta}$ can not be conjugate to any other of the listed subalgebras and if $\mathbb C X_{\alpha} \oplus \mathbb C X_{3\alpha + \beta}$ and $\mathbb C X_{\alpha} \oplus \mathbb C X_{3\alpha + 2\beta}$ are conjugate by $s\in Aut(G_2)$, then 
$$
s(X_{\alpha}) = \mu \cdot X_{\alpha},\quad s(X_{3\alpha + \beta}) = \lambda \cdot X_{3\alpha + 2\beta}, \;\; \mu, \lambda \in {\mathbb C}^{*}. 
$$

Since $s\in Aut(G_2)$ preserves the Lie bracket, $s(X_{3\alpha + 2\beta})=a'X_{\alpha} + b'X_{3\alpha + \beta} + c'X_{3\alpha + 2\beta}$, and for any $x\in \mathfrak{h}$,
$$
s(x)=x-\frac{{\beta}(x)}{2}\cdot H_{3\alpha + 2\beta} + aX_{\alpha} + bX_{3\alpha + \beta} + cX_{3\alpha + 2\beta}.
$$

Then $s\in Aut(G_2)$ preserves the Lie bracket $[x,X_{3\alpha + 2\beta}]$ only if $\beta (x)=0$ or $(\alpha + \beta )(x)=0$. Since $x\in \mathfrak{h}$ is arbitrary, we get a contradiction. Hence regular subalgebras of dimension $2$ with all elements nilpotent are not pairwise conjugate.\\

If ${\mathfrak u}_{\alpha}\oplus L$ were conjugate to ${\mathfrak u}_{\beta}\oplus L$, then its only (up to a scalar multiple) nilpotent element $X_{\alpha}$ would be conjugate to $X_{\beta}$. This can not happen as noted above.\\

Suppose $\mathbb C X_{\gamma}\oplus L$ is conjugate to $\mathbb C X_{\gamma}\oplus L'$ by $s\in Aut (G_2)$, where $\gamma \in \{ \alpha , \beta \}$. Let $L=\mathbb C x$, $x\in \mathfrak h$. Then $s(X_{\gamma})=\lambda X_{\gamma}$ and $s(x)=z + \mu X_{\gamma}$, $\lambda \in {\mathbb C}^{*}$, $\mu \in \mathbb C$, $z\in \mathfrak h$, $L'=\mathbb C z$. By composing $s$ with $\exp(c\cdot ad_{X_{\gamma}})$ with a suitable $c\in \mathbb C$, we may assume that $\mu =0$, i.e. $s(L)=L'$.\\

Since $s \in Aut (G_2)$ preserves the Lie bracket, ${\gamma} (z-x)=0$. Note that we may assume that ${\gamma}(x)={\gamma}(z)\neq 0$, because otherwise $L=L'$.\\

Let $s(H_{{\gamma}^{\perp}})=w+\sum\limits_{\mu \in \Phi} c_{\mu}\cdot X_{\mu}$, where $w\in \mathfrak h$ and ${\gamma}^{\perp}\in {\Phi}^{+}$ is such that $(\gamma , {\gamma}^{\perp})=0$. Since $s$ preserves the Lie bracket $[x, H_{{\gamma}^{\perp}}]=0$, $c_{\mu} \neq 0$ only if $\mu (z)=0$. Hence if $\mu (z)\neq 0$ for any $\mu \in \Phi$, then $s(\mathfrak h) = \mathfrak h$. In this case, $L$ and $L'$ lie in the same orbit of the Weyl group, which is possible only if $L=L'$ by the restrictions imposed on $L$ in Table~\ref{table:1}:
$$
\alpha \cdot (3\alpha +2\beta ) (L)\geq 0 \quad \mbox{and}\quad \beta \cdot (2\alpha +\beta ) (L)\geq 0 \quad \mbox{respectively}.
$$

If there exists $\mu \in {\Phi}^{+}$ such that $\mu (z)=0$, then 
$$
s(H_{{\gamma}^{\perp}})=w+c_{\mu} X_{\mu}+c_{-\mu} X_{-\mu}.
$$

Since $s$ preserves the Lie bracket $[X_{\gamma}, H_{{\gamma}^{\perp}}]=0$, $c_{\pm \mu}\neq 0$ only if $[X_{\pm \mu}, X_{\gamma}]=0$. Hence by composing $s\in Aut (G_2)$ with suitable automorphisms of the form $\exp(ad_{u})$, $u\in G_2$, we may assume that $c_{\mu}=c_{-\mu}=0$. Then $s(\mathfrak h) = \mathfrak h$ and we are done by the previous argument.\\

Now consider subalgebras of dimension $3$.\\

There are two semisimple regular subalgebras of dimension $3$:
$$
A_1 \quad \mbox{and} \quad \tilde{A_1}.
$$

There are five types of regular non-semisimple subalgebras of dimension $3$ which contain non-zero semisimple elements:
$$
{\mathfrak{u}}_{\alpha} \oplus \mathfrak{h},\quad {\mathfrak{u}}_{\beta} \oplus \mathfrak{h},
$$
$$
L\oplus \mathbb C X_{\alpha} \oplus \mathbb C X_{3\alpha +\beta}, \quad L\oplus \mathbb C X_{\alpha} \oplus \mathbb C X_{3\alpha +2\beta}, \quad L\oplus \mathbb C X_{\beta} \oplus \mathbb C X_{3\alpha +2\beta},
$$
where $L\subset \mathfrak{h}$, $dim(L)=1$.\\

There are four regular subalgebras of dimension $3$ all of whose elements are nilpotent in $G_2$:
$$
\mathbb C X_{\beta} \oplus \mathbb C X_{3\alpha +\beta} \oplus \mathbb C X_{3\alpha +2\beta}, \quad \mathbb C X_{\alpha} \oplus \mathbb C X_{2\alpha +\beta} \oplus \mathbb C X_{3\alpha +\beta},
$$
$$
\mathbb C X_{\beta} \oplus \mathbb C X_{\alpha +\beta} \oplus \mathbb C X_{3\alpha +2\beta}, \quad \mathbb C X_{\alpha} \oplus \mathbb C X_{3\alpha +\beta} \oplus \mathbb C X_{3\alpha +2\beta}.
$$

If $A_1$ were conjugate to $\tilde{A_1}$, then $X_{\beta}$ would be conjugate to $s(X_{\beta})=aH_{\alpha}+bX_{\alpha}+cX_{-\alpha}$, $s\in Aut(G_2)$. Then 
$$
0=ad^{3}_{s(X_{\beta})}(X_{\beta})=6b^3\cdot X_{3\alpha+\beta}+...
$$

Hence $b=0$. Then $a=0$, because $s(X_{\beta})$ is nilpotent. Hence $X_{\beta}$ would be conjugate to $X_{-\alpha}$, which was shown to be impossible above.\\

If ${\mathfrak{u}}_{\alpha} \oplus \mathfrak{h}$ were conjugate to ${\mathfrak{u}}_{\beta} \oplus \mathfrak{h}$, then ${\mathfrak{u}}_{\alpha}$ and ${\mathfrak{u}}_{\beta}$ would be conjugate. It was shown above to be not the case.\\

Suppose $L\oplus \mathbb C X_{\alpha} \oplus \mathbb C X_{\gamma}$ is conjugate to $L'\oplus \mathbb C X_{\alpha} \oplus \mathbb C X_{\gamma}$ by $s\in Aut(G_2)$, where $\gamma \in \{ 3\alpha +\beta , 3\alpha +2\beta \}$. Then $s(X_{\gamma})=\lambda X_{\gamma}$, $\lambda \in {\mathbb C}^{*}$.\\

Let $s(X_{\alpha})=b_0X_{\alpha}+b_1X_{\gamma}$, $s(x)=z+a_0X_{\alpha}+a_1X_{\gamma}$, $L=\mathbb C x$, $L'=\mathbb C z$. Since $s\in Aut(G_2)$ preserves the Lie bracket $[x,X_{\gamma}]$, $\gamma (x-z)=0$. Since $s$ preserves the Lie bracket $[x,X_{\alpha}]$ and $b_0\neq 0$, ${\alpha}(x)={\alpha}(z)$. Hence $x=z$, i.e. $L=L'$.\\

Suppose $L\oplus \mathbb C X_{\beta} \oplus \mathbb C X_{3\alpha +2\beta}$ is conjugate to $L'\oplus \mathbb C X_{\beta} \oplus \mathbb C X_{3\alpha +2\beta}$ by $s\in Aut(G_2)$. Let $s(X_{\beta})=b_0X_{\beta}+b_1X_{3\alpha +2\beta}$, $s(X_{3\alpha +2\beta})=c_0X_{\beta}+c_1X_{3\alpha +2\beta}$, $s(x)=z+a_0X_{\beta}+a_1X_{3\alpha +2\beta}$, $L=\mathbb C x$, $L'=\mathbb C z$.\\

Since $s\in Aut(G_2)$ preserves the Lie bracket $[x,X_{\beta}]$, either $\beta (x)= \beta (z)$ if $b_0\neq 0$, or $\beta (x) = (3\alpha +2\beta) (z)$ if $b_1\neq 0$. Since $s$ preserves the Lie bracket $[x,X_{3\alpha+2\beta}]$, either $(3\alpha+2\beta) (x)= (3\alpha+2\beta) (z)$ if $c_1\neq 0$, or $\beta (z) = (3\alpha +2\beta) (x)$ if $c_0\neq 0$. Hence either $L=L'$ or $b_0=c_1=0$, i.e. $s(X_{\beta})=\lambda \cdot X_{3\alpha+2\beta}$, $s(X_{3\alpha+2\beta})=\nu \cdot X_{\beta}$, $\lambda , \nu\in { \mathbb C}^{*}$. In the latter case, $L'=w(L)$, where $w$ is an element of the Weyl group interchanging roots $\beta$ and $3\alpha +2\beta$ (i.e. a reflection in the hyperplane spanned by $\alpha +\beta$). Hence either $L=L'$ or at most one of $L$, $L'$ satisfies the condition imposed on $L$ in Table~\ref{table:1}:
$$
(\alpha + \beta)\cdot (3\alpha + \beta) (L)\geq 0.
$$

Suppose $\mathbb C X_{\beta} \oplus \mathbb C X_{3\alpha +\beta} \oplus \mathbb C X_{3\alpha +2\beta}$ is conjugate to $\mathbb C X_{\alpha} \oplus \mathbb C X_{2\alpha +\beta} \oplus \mathbb C X_{3\alpha +\beta}$ by $s\in Aut(G_2)$. If $s(X_{\beta})=aX_{\alpha}+bX_{2\alpha +\beta}+cX_{3\alpha +\beta}$, then $X_{\beta}$ is conjugate to $x=aX_{2\alpha +\beta}+bX_{\alpha +\beta}+cX_{3\alpha +2\beta}$. Hence 
$$
0=ad^3_x(X_{-(3\alpha +\beta )})=-6a^3\cdot X_{3\alpha+2\beta}, \quad 0=ad^3_x(X_{-\beta})=-6b^3\cdot X_{3\alpha+2\beta}.
$$
Hence $a=b=0$, i.e. $s(X_{\beta})\in \mathbb C X_{3\alpha+\beta}$. For the same reason, $s(X_{3\alpha + \beta})\in \mathbb C X_{3\alpha+\beta}$. This is impossible.\\

Suppose $\mathbb C X_{\beta} \oplus \mathbb C X_{\alpha +\beta} \oplus \mathbb C X_{3\alpha +2\beta}$ is conjugate to $\mathbb C X_{\alpha} \oplus \mathbb C X_{3\alpha +\beta} \oplus \mathbb C X_{3\alpha +2\beta}$ by $s\in Aut(G_2)$. If $s(X_{\beta})=aX_{\alpha}+bX_{3\alpha+\beta}+cX_{3\alpha+2\beta}$, then $X_{\beta}$ is conjugate to $x=aX_{2\alpha +\beta}+bX_{3\alpha+2\beta}+cX_{\beta}$. Then 
$$
0=ad^3_{x}(X_{-(3\alpha +\beta)})=-6a^3\cdot X_{3\alpha +2\beta}.
$$

Hence $s(\mathbb C X_{\beta} \oplus \mathbb C X_{3\alpha +2\beta})\subset \mathbb C X_{3\alpha +\beta} \oplus \mathbb C X_{3\alpha +2\beta}$.\\

If $s(X_{\alpha +\beta})=aX_{\alpha}+bX_{3\alpha+\beta}+cX_{3\alpha+2\beta}$, then $X_{\alpha + \beta}$ is conjugate to $x=aX_{2\alpha +\beta}+bX_{3\alpha+2\beta}+cX_{\beta}$. Note that $rk(ad_{s(X_{\alpha + \beta})})=8$ and $rk(ad_x)\geq 9$ unless $a\cdot c=0$. Since $a\neq 0$, $c=0$. By composing $s$ with $\exp (u\cdot ad_{X_{2\alpha +\beta}})$, we can ensure that $s(X_{\alpha +\beta})=\lambda X_{\alpha}$, $\lambda \in {\mathbb C}^{*}$.\\

Let $s(x)=z+\sum\limits_{\gamma \in \Phi} u_{\gamma} X_{\gamma}$, where $x, z\in \mathfrak{h}$. Let $s(X_{\beta})=a_0X_{3\alpha+\beta}+a_1X_{3\alpha +2\beta}$, $s(X_{3\alpha + 2\beta})=b_0X_{3\alpha+\beta}+b_1X_{3\alpha +2\beta}$.\\

Since $s$ preserves the Lie bracket $[x, X_{\alpha +\beta}]$, $\alpha (z)=(\alpha +\beta) (x)$. Since $s$ preserves the Lie bracket $[x, X_{\beta}]$, 
$$
a_0 \cdot \beta (x)=a_0\cdot (3\alpha +\beta)(z) + u_{-\beta}\cdot a_1,\;\; a_1\cdot \beta (x)=a_1\cdot (3\alpha +2\beta )(z).
$$

Since $s$ preserves the Lie bracket $[x, X_{3\alpha + 2\beta}]$, 
$$
b_0 \cdot (3\alpha + 2\beta )(x)=b_0\cdot (3\alpha + \beta )(z) + u_{-\beta}\cdot b_1,\;\; b_1\cdot (3\alpha +2 \beta ) (x)=b_1\cdot (3\alpha +2\beta ) (z).
$$

So, if $b_1\neq 0$, then $\beta (z)=-\beta (x)/2$, $\alpha (z)=(\alpha + \beta )(x)$. If $a_1\neq 0$, then $(3\alpha +\beta )(x)=0$ for any $x\in \mathfrak{h}$, a contradiction. If $a_1=0$ and $a_0\neq 0$, then $(2\alpha +\beta )(x)=0$ for any $x\in \mathfrak{h}$, a contradiction.\\

Hence $b_1=0$. Then $\beta (z)=-\beta (x)$, $\alpha (z)=(\alpha + \beta )(x)$. If $a_1\neq 0$, then $3\alpha (x)=0$ for any $x\in \mathfrak{h}$, a contradiction. If $a_1=0$ and $a_0\neq 0$, then $(3\alpha +\beta )(x)=0$ for any $x\in \mathfrak{h}$, a contradiction.\\

Now consider subalgebras of dimension $4$.\\

There are two regular subalgebras of dimension $4$ with the $1$-dimensional radical generated by a semisimple element:
$$
A_1\oplus \mathbb C H_{2\alpha +\beta},\quad \tilde{A_1}\oplus \mathbb C H_{3\alpha +2\beta}.
$$

There are two regular subalgebras of dimension $4$ with the $1$-dimensional radical generated by a nilpotent element:
$$
A_1\oplus \mathbb C X_{2\alpha +\beta} ,\quad \tilde{A_1}\oplus \mathbb C X_{3\alpha +2\beta}.
$$

There are three regular solvable subalgebras of dimension $4$ with $2$-dimensional subalgebras of nilpotent elements:
$$
\mathfrak{h}\oplus \mathbb C X_{\alpha} \oplus \mathbb C X_{3\alpha +\beta} ,\quad \mathfrak{h}\oplus \mathbb C X_{\alpha} \oplus \mathbb C X_{3\alpha +2\beta} ,\quad \mathfrak{h}\oplus \mathbb C X_{\beta} \oplus \mathbb C X_{3\alpha +2\beta} .
$$

There are four types of regular solvable subalgebras of dimension $4$ with $3$-dimensional subalgebras of nilpotent elements:
$$
L\oplus \mathbb C X_{\alpha} \oplus \mathbb C X_{2\alpha +\beta} \oplus \mathbb C X_{3\alpha +\beta} ,\quad L\oplus \mathbb C X_{\beta} \oplus \mathbb C X_{3\alpha +\beta} \oplus \mathbb C X_{3\alpha +2\beta},
$$
$$
L\oplus \mathbb C X_{\alpha} \oplus \mathbb C X_{3\alpha +\beta} \oplus \mathbb C X_{3\alpha +2\beta} ,\quad L\oplus \mathbb C X_{\beta} \oplus \mathbb C X_{\alpha +\beta} \oplus \mathbb C X_{3\alpha +2\beta},
$$
parametrized by a vector subspace $L\subset \mathfrak{h}$, $dim(L)=1$.\\

There are two regular nilpotent subalgebras of dimension $4$:
$$
\mathbb C X_{\alpha} \oplus \mathbb C X_{2\alpha +\beta} \oplus \mathbb C X_{3\alpha +\beta} \oplus \mathbb C X_{3\alpha +2\beta} ,\quad \mathbb C X_{\beta} \oplus \mathbb C X_{2\alpha +\beta} \oplus \mathbb C X_{3\alpha +\beta} \oplus \mathbb C X_{3\alpha +2\beta}.
$$

Let $\mathfrak{u}_{\beta} + \mathfrak{u}_{2\alpha +\beta} + \mathfrak{u}_{3\alpha +\beta} + \mathfrak{u}_{3\alpha +2\beta}$ be conjugate to $\mathfrak{u}_{\alpha} + \mathfrak{u}_{2\alpha +\beta} + \mathfrak{u}_{3\alpha +\beta} + \mathfrak{u}_{3\alpha +2\beta}$ by $s\in Aut(G_2)$.\\

Then $s(X_{\gamma})=aX_{\alpha}+bX_{2\alpha+\beta}+cX_{3\alpha+\beta}+dX_{3\alpha+2\beta}$, $\gamma \in \{ \beta , 3\alpha +\beta , 3\alpha +2\beta \}$, is conjugate to $x=aX_{2\alpha+\beta}+bX_{\alpha+\beta}+cX_{3\alpha+2\beta}+dX_{\beta}$. Then 
$$
0=ad^3_x(X_{-(3\alpha+\beta)})=-6a^3\cdot X_{3\alpha+2\beta},\quad 0=ad^3_x(X_{-\beta})=(9abd-6b^3)\cdot X_{3\alpha+2\beta}.
$$
Hence $a=b=0$, i.e. $s(\mathbb C X_{\beta} \oplus \mathbb C X_{3\alpha +\beta} \oplus \mathbb C X_{3\alpha +2\beta})\subset \mathbb C X_{3\alpha +\beta} \oplus \mathbb C X_{3\alpha +2\beta}$, a contradiction.\\

Suppose $L\oplus \mathfrak{g}$ is conjugate to $L'\oplus \mathfrak{g}$ by $s\in Aut(G_2)$, where $\mathfrak{g}= \mathfrak{u}_{\alpha} + \mathfrak{u}_{2\alpha +\beta} + \mathfrak{u}_{3\alpha +\beta}$.\\

Then $s(\mathfrak{u}_{3\alpha +\beta})=s([\mathfrak{g},\mathfrak{g}])=[s(\mathfrak{g}),s(\mathfrak{g})]=\mathfrak{u}_{3\alpha +\beta}$, i.e. $s(X_{3\alpha +\beta})=\lambda \cdot X_{3\alpha +\beta}$.\\

Let $s(X_{\alpha})=aX_{\alpha}+bX_{2\alpha +\beta}+cX_{3\alpha +\beta}$. If $a\cdot b\neq 0$, then $rk (ad_{s(X_{\alpha})})=10>8=rk(ad_{X_{\alpha}})$. Hence $a\cdot b=0$. Similarly, $s(X_{2\alpha +\beta})=a_0X_{\alpha}+b_0X_{2\alpha +\beta}+c_0X_{3\alpha +\beta}$, where $a_0\cdot b_0=0$.\\

Let $s(x)=z+u_0X_{\alpha}+u_1X_{2\alpha +\beta}+u_2X_{3\alpha +\beta}$, $L=\mathbb C x$, $L'=\mathbb C z$.\\

Since $s$ preserves the Lie bracket $[x,X_{3\alpha +\beta}]$, $(3\alpha +\beta)(x-z)=0$.\\

Since $s$ preserves the Lie bracket $[x,X_{\alpha}]$,
$$
a\cdot \alpha (x-z) =0,\quad b\cdot \left( (2\alpha +\beta)(z)-\alpha (x)\right) =0.
$$

If $a\neq 0$, then $\alpha (x-z) =0$, i.e. $x=z$. Hence $L=L'$.\\

If $a=0$, then $b\neq 0$ and $a_0\neq 0$. Hence $b_0=0$. Compose $s$ with an element $w\in Aut(G_2)$ of the Weyl group, which fixes $3\alpha +\beta$ and interchanges $\alpha$ and $2\alpha +\beta$. Then $L\oplus \mathfrak{g}$ is conjugate to $w(L')\oplus \mathfrak{g}$ by an automorphism, which maps $X_{\alpha}$ to $b'X_{\alpha}+c'X_{3\alpha +\beta}$, $X_{2\alpha +\beta}$ to $a_0'X_{2\alpha +\beta}+c_0'X_{3\alpha +\beta}$ and $X_{3\alpha +\beta}$ to ${\lambda}'\cdot X_{3\alpha +\beta}$.\\

As we have seen, in this case, $L=w(L')$. Hence $L=L'$, because otherwise only one of $L$, $L'$ satisfies the condition imposed on $L$ in Table~\ref{table:1}:
$$
(\alpha +\beta)\cdot (3\alpha +\beta)(L)\geq 0.
$$

Suppose $L\oplus \mathfrak{g}$ is conjugate to $L'\oplus \mathfrak{g}$ by $s\in Aut(G_2)$, where $\mathfrak{g}= \mathfrak{u}_{\beta} + \mathfrak{u}_{3\alpha +\beta} + \mathfrak{u}_{3\alpha +2\beta}$.\\

Then $s(\mathfrak{u}_{3\alpha +2\beta})=s([\mathfrak{g},\mathfrak{g}])=[s(\mathfrak{g}),s(\mathfrak{g})]=\mathfrak{u}_{3\alpha +2\beta}$, i.e. $s(X_{3\alpha +2\beta})=\lambda \cdot X_{3\alpha +2\beta}$.\\

If $s(X_{\beta})=a_0X_{\beta}+a_1X_{3\alpha +\beta}+a_2X_{3\alpha +2\beta}$, then $0=ad^3_{s(X_{\beta})}(X_{-\beta})=3a_0^2a_1\cdot X_{3\alpha +2\beta}$, i.e. $a_0a_1=0$. Similarly, $s(X_{3\alpha +\beta})=b_0X_{\beta}+b_1X_{3\alpha +\beta}+b_2X_{3\alpha +2\beta}$, where $b_0b_1=0$. Hence either $a_0=b_1=0$ or $a_1=b_0=0$.\\

In the former case, compose $s$ with a Weyl group element $w\in Aut(G_2)$ acting on ${\mathfrak{h}}^{*}$ as a reflection through the line spanned by $3\alpha +2\beta$, i.e. $w$ interchanges $\beta$ with $3\alpha +\beta$ and fixes $3\alpha +2\beta$.\\

Hence we may assume that $s(X_{\beta})=a_0X_{\beta}+a_2X_{3\alpha +2\beta}$, $s(X_{3\alpha +\beta})=b_1X_{3\alpha +\beta}+b_2X_{3\alpha +2\beta}$. Let $s(x)=z+c_0X_{\beta}+c_1X_{3\alpha +\beta}+c_2X_{3\alpha +2\beta}$, $L=\mathbb C x$, $L'=\mathbb C z$.\\

Since $s$ preserves the Lie bracket $[x, X_{\beta}]$, $\beta (x-z)=0$. Since $s$ preserves the Lie bracket $[x, X_{3\alpha + \beta}]$, $(3\alpha + \beta )(x-z)=0$. Hence $x=z$, and so $L=L'$ or $L=w(L')$. In the latter case, $L=L'$ by the condition imposed on $L$ in Table~\ref{table:1}:
$$
\alpha \cdot (3\alpha +2\beta)(L)\geq 0.
$$

Suppose $L\oplus \mathbb C X_{\beta} \oplus \mathbb C X_{\alpha +\beta} \oplus \mathbb C X_{3\alpha +2\beta}$ is conjugate to $L'\oplus \mathbb C X_{\beta} \oplus \mathbb C X_{\alpha +\beta} \oplus \mathbb C X_{3\alpha +2\beta}$ by $s\in Aut(G_2)$.\\

If $s(X_{\beta})=a_0X_{\beta}+a_1X_{\alpha +\beta}+a_2X_{3\alpha +2\beta}$, then $0=ad^3_{s(X_{\beta})}(X_{-\beta})=-6a_1^3\cdot X_{3\alpha +2\beta}$. Hence $s(X_{\beta})=a_0X_{\beta}+a_2X_{3\alpha +2\beta}$, $s(X_{3\alpha +2\beta})=b_0X_{\beta}+b_2X_{3\alpha +2\beta}$.\\

Let $s(x)=z+c_0X_{\beta}+c_1X_{\alpha +\beta}+c_2X_{3\alpha +2\beta}$, $L=\mathbb C x$, $L'=\mathbb C z$.\\

Since $s$ preserves the Lie brackets $[x,X_{\beta}]$ and $[x,X_{3\alpha + 2\beta}]$,
$$
a_0\cdot \beta (x-z) = b_2\cdot (3\alpha +2\beta) (x-z)=0,
$$
$$
a_2\cdot \left( (3\alpha +2\beta)(z)-\beta (x)\right)=0,\quad b_0\cdot \left( (3\alpha +2\beta)(x)-\beta (z)\right)=0.
$$

So, if $a_0b_2\neq 0$, then $x=z$, i.e. $L=L'$. If $a_0=0$, then compose $s$ with a Weyl group element $w\in Aut(G_2)$ acting on ${\mathfrak{h}}^{*}$ as a reflection through $\alpha +\beta$. I.e. $w$ fixes $\alpha +\beta$ and interchanges $\beta$ with $3\alpha +2\beta$. Upon this composition, we may assume that $a_0\neq 0$ and $a_2=0$. Hence $b_2\neq 0$. By the previous analysis, $L=w(L')$. Hence $L=L'$ by the condition imposed on $L$ in Table~\ref{table:1}:
$$
(\alpha +\beta ) \cdot (3\alpha +\beta)(L)\geq 0.
$$

Suppose $L\oplus \mathbb C X_{\alpha} \oplus \mathbb C X_{3\alpha +\beta} \oplus \mathbb C X_{3\alpha +2\beta}$ is conjugate to $L'\oplus \mathbb C X_{\alpha} \oplus \mathbb C X_{3\alpha +\beta} \oplus \mathbb C X_{3\alpha +2\beta}$ by $s\in Aut(G_2)$.\\

If $s(X_{3\alpha + \beta})=a_0X_{\alpha}+a_1X_{3\alpha +\beta}+a_2X_{3\alpha +2\beta}$, then $X_{3\alpha + \beta}$ is conjugate to $x=a_0X_{2\alpha +\beta}+a_1X_{3\alpha +2\beta}+a_2X_{\beta}$. Then $0=ad^3_x (X_{-(3\alpha +\beta)})=-6a_0^3\cdot X_{3\alpha +2\beta}$. Hence $s(X_{3\alpha + \beta})=a_1X_{3\alpha +\beta}+a_2X_{3\alpha +2\beta}$, $s(X_{3\alpha + 2\beta})=b_1X_{3\alpha +\beta}+b_2X_{3\alpha +2\beta}$.\\

Let $s(x)=z+u_0X_{\alpha}+u_1X_{3\alpha +\beta}+u_2X_{3\alpha +2\beta}$, $L=\mathbb C x$, $L'=\mathbb C z$. Since $s$ preserves the Lie brackets $[x,X_{3\alpha +\beta}]$ and $[x,X_{3\alpha +2\beta}]$,
$$
a_1 \cdot (3\alpha +\beta)(x-z)=b_2 \cdot (3\alpha +2\beta) (x-z)=0,
$$
$$
a_2\cdot ((3\alpha +2\beta)(z)-(3\alpha +\beta)(x))=b_1\cdot ((3\alpha +\beta)(z)-(3\alpha +2\beta)(x))=0.
$$

If $a_1b_2\neq 0$, then $x=z$, i.e. $L=L'$. Otherwise, $a_2b_1\neq 0$. Then $(6\alpha +3\beta)(x-z)=0$.\\

Let $s(X_{\alpha })=c_0X_{\alpha}+c_1X_{3\alpha +\beta}+c_2X_{3\alpha +2\beta}$. Since $s$ preserves the Lie bracket $[x, X_{\alpha}]$ and $c_0\neq 0$, $\alpha (x-z)=0$. Hence $x=z$, i.e. $L=L'$.\\

Now consider subalgebras of dimension $5$.\\

There are two regular subalgebras of dimension $5$ with the $2$-dimensional radical containing a non-zero semisimple element:
$$
A_1\oplus \mathbb C H_{2\alpha +\beta} \oplus \mathbb C X_{2\alpha +\beta},\quad \tilde{A_1}\oplus \mathbb C H_{3\alpha +2\beta} \oplus \mathbb C X_{3\alpha +2\beta} .
$$

They are not conjugate, because otherwise the nilpotent generators $X_{3\alpha +2\beta}$ and $X_{2\alpha +\beta}$ of their radicals would be conjugate too. It was shown above that this is false.\\

There is a regular subalgebra of dimension $5$ with the $2$-dimensional radical consisting of nilpotent elements:
$$
A_1\oplus \mathbb C X_{3\alpha +\beta} \oplus \mathbb C X_{3\alpha +2\beta} .
$$

There are four regular solvable subalgebras of dimension $5$ with $3$-dimensional subalgebras of nilpotent elements:
$$
\mathfrak{h}\oplus \mathbb C X_{\alpha} \oplus \mathbb C X_{2\alpha +\beta} \oplus \mathbb C X_{3\alpha +\beta} ,\quad \mathfrak{h}\oplus \mathbb C X_{\beta} \oplus \mathbb C X_{\alpha +\beta} \oplus \mathbb C X_{3\alpha +2\beta} ,
$$
$$
\mathfrak{h}\oplus \mathbb C X_{\beta} \oplus \mathbb C X_{3\alpha +\beta} \oplus \mathbb C X_{3\alpha +2\beta} ,\quad \mathfrak{h}\oplus \mathbb C X_{\alpha} \oplus \mathbb C X_{3\alpha +\beta} \oplus \mathbb C X_{3\alpha +2\beta}.
$$

There are two types of regular solvable subalgebras of dimension $5$ with $4$-dimensional subalgebras of nilpotent elements:
$$
L\oplus \mathbb C X_{\alpha} \oplus \mathbb C X_{2\alpha +\beta} \oplus \mathbb C X_{3\alpha +\beta} \oplus \mathbb C X_{3\alpha +2\beta} ,\quad L\oplus \mathbb C X_{\beta} \oplus \mathbb C X_{2\alpha +\beta} \oplus \mathbb C X_{3\alpha +\beta} \oplus \mathbb C X_{3\alpha +2\beta},
$$
parametrized by a vector subspace $L\subset \mathfrak{h}$, $dim(L)=1$.\\

There are two regular nilpotent subalgebras of dimension $5$:
$$
\mathbb C X_{\alpha} \oplus \mathbb C X_{\alpha +\beta} \oplus \mathbb C X_{2\alpha +\beta} \oplus \mathbb C X_{3\alpha +\beta} \oplus \mathbb C X_{3\alpha +2\beta} ,\quad \mathbb C X_{\beta} \oplus \mathbb C X_{\alpha +\beta} \oplus \mathbb C X_{2\alpha +\beta} \oplus \mathbb C X_{3\alpha +\beta} \oplus \mathbb C X_{3\alpha +2\beta}.
$$

These two subalgebras are not conjugate, because otherwise so would be their derived ideals, which are $\mathbb C X_{2\alpha +\beta} \oplus \mathbb C X_{3\alpha +\beta} \oplus \mathbb C X_{3\alpha +2\beta}$ and $\mathbb C X_{3\alpha +2\beta}$ respectively.\\

Suppose $L\oplus \mathfrak{g}$ and $L'\oplus \mathfrak{g}$ are conjugate by $s\in Aut(G_2)$, where $\mathfrak{g}= \mathfrak{u}_{\alpha} + \mathfrak{u}_{2\alpha +\beta} + \mathfrak{u}_{3\alpha +\beta} + \mathfrak{u}_{3\alpha +2\beta}$.\\

Then $\mathfrak{u}_{3\alpha +\beta}=[\mathfrak{g},\mathfrak{g}]$ is mapped to itself, i.e. $s(X_{3\alpha +\beta})=\lambda X_{3\alpha +\beta}$.\\

If $s(X_{3\alpha +2\beta})=a_0X_{\alpha}+a_1X_{2\alpha+\beta}+a_2X_{3\alpha+\beta}+a_3X_{3\alpha+2\beta}$, then $X_{3\alpha +2\beta}$ is conjugate to $x=a_0X_{2\alpha +\beta}+a_1X_{\alpha+\beta}+a_2X_{3\alpha+2\beta}+a_3X_{\beta}$. Hence 
$$
0=ad^3_x (X_{-(3\alpha +\beta)})=-6a_0^3\cdot X_{3\alpha +2\beta},\quad 0=ad^3_x (X_{-\beta})=(9a_0a_1a_3-6a_1^3)\cdot X_{3\alpha +2\beta}.
$$

So, $s(X_{3\alpha +2\beta})=a_2X_{3\alpha+\beta}+a_3X_{3\alpha+2\beta}$.\\

Let $s(x)=z+c_0X_{\alpha}+c_1X_{2\alpha+\beta}+c_2X_{3\alpha+\beta}+c_3X_{3\alpha+2\beta}$, $L=\mathbb C x$, $L'=\mathbb C z$. Since $s$ preserves the Lie brackets $[x,X_{3\alpha +\beta}]$ and $[x,X_{3\alpha +2\beta}]$,
$$
\lambda \cdot (3\alpha +\beta) (x-z)=a_3\cdot (3\alpha +2\beta) (x-z)=0.
$$

Since $\lambda a_3 \neq 0$, then $x=z$, i.e. $L=L'$.\\

Suppose $L\oplus \mathfrak{g}$ and $L'\oplus \mathfrak{g}$ are conjugate by $s\in Aut(G_2)$, where $\mathfrak{g}=\mathfrak{u}_{\beta} + \mathfrak{u}_{2\alpha +\beta} + \mathfrak{u}_{3\alpha +\beta} + \mathfrak{u}_{3\alpha +2\beta}$.\\

Then $\mathfrak{u}_{3\alpha +2\beta}=[\mathfrak{g},\mathfrak{g}]$ is mapped to itself, i.e. $s(X_{3\alpha +2\beta})=\lambda X_{3\alpha +2\beta}$.\\

If $s(X_{3\alpha +\beta})=a_0X_{\beta}+a_1X_{2\alpha+\beta}+a_2X_{3\alpha+\beta}+a_3X_{3\alpha+2\beta}$, then 
$$
0=ad^3_{s(X_{3\alpha +\beta})} (X_{-\beta})=3a_0^2a_2\cdot X_{3\alpha +2\beta},\quad 0=ad^3_{s(X_{3\alpha +\beta})} (X_{-(\alpha+\beta)})=18a_0a_1^2\cdot X_{3\alpha +2\beta},
$$
$$
0=ad^3_{s(X_{3\alpha +\beta})} (X_{-(3\alpha +\beta)})=(-6a_1^3-3a_0a_2^2)\cdot X_{3\alpha +2\beta}.
$$

Hence $s(X_{3\alpha +\beta})=a_0X_{\beta}+a_2X_{3\alpha+\beta}+a_3X_{3\alpha+2\beta}$, where $a_0a_2=0$.\\

Similarly, $s(X_{\beta})=b_0X_{\beta}+b_2X_{3\alpha+\beta}+b_3X_{3\alpha+2\beta}$, where $b_0b_2=0$. Hence either $a_0=b_2=0$ or $a_2=b_0=0$.\\

Let $s(x)=z+u_0X_{\beta}+u_1X_{2\alpha+\beta}+u_2X_{3\alpha+\beta}+u_3X_{3\alpha+2\beta}$. Since $s$ preserves the Lie bracket $[x, X_{3\alpha +2\beta}]$, $(3\alpha +2\beta)(x-z)=0$, i.e. $x=z+\mu \cdot H_{\alpha}$.\\

Since $s$ preserves the Lie brackets $[x, X_{3\alpha +\beta}]$ and $[x, X_{\beta}]$, 
$$
a_0\cdot ((3\alpha +\beta)(x)-\beta (z))=a_2\cdot (3\alpha +\beta)(x-z)=0,
$$
$$
b_2\cdot ((3\alpha +\beta)(z)-\beta (x))=b_0\cdot \beta (x-z)=0.
$$

If $a_0=b_2=0$, then $a_2b_0 \neq 0$. Hence $x=z$, i.e. $L=L'$.\\

If $a_2=b_0=0$, then $a_0b_2 \neq 0$. Hence $(3\alpha +2\beta)(x-z)=0$.\\

If $s(X_{2\alpha +\beta})=c_0X_{\beta}+c_1X_{2\alpha+\beta}+c_2X_{3\alpha+\beta}+c_3X_{3\alpha+2\beta}$, then $c_1\neq 0$. Since $s$ preserves the Lie bracket $[x, X_{2\alpha +\beta}]$, $(2\alpha +\beta)(x-z)=0$. Hence $x=z$ and so $L=L'$.\\

Now consider subalgebras of dimension $6$.\\

There is one regular semisimple subalgebra of dimension $6$:
$$
A_1+\tilde{A_1}.
$$

There is one regular subalgebra of dimension $6$ with the $3$-dimensional radical containing a non-zero semisimple element:
$$
A_1\oplus \mathbb C H_{2\alpha +\beta } \oplus \mathbb C X_{3\alpha +\beta}\oplus \mathbb C X_{3\alpha +2\beta}.
$$

There is one regular subalgebra of dimension $6$ with the $3$-dimensional radical consisting of nilpotent elements:
$$
A_1\oplus \mathbb C X_{2\alpha +\beta} \oplus \mathbb C X_{3\alpha +\beta}\oplus \mathbb C X_{3\alpha +2\beta}.
$$

There are two regular solvable subalgebras of dimension $6$ with $4$-dimensional subalgebras of nilpotent elements:
$$
\mathfrak{h}\oplus \mathbb C X_{\alpha} \oplus \mathbb C X_{2\alpha +\beta} \oplus \mathbb C X_{3\alpha +\beta} \oplus \mathbb C X_{3\alpha +2\beta} ,\quad \mathfrak{h}\oplus \mathbb C X_{\beta} \oplus \mathbb C X_{2\alpha +\beta} \oplus \mathbb C X_{3\alpha +\beta} \oplus \mathbb C X_{3\alpha +2\beta}.
$$

There are two types of regular solvable subalgebras of dimension $6$ with $5$-dimensional subalgebras of nilpotent elements:
$$
L\oplus \mathbb C X_{\alpha} \oplus \mathbb C X_{\alpha +\beta} \oplus \mathbb C X_{2\alpha +\beta} \oplus \mathbb C X_{3\alpha +\beta} \oplus \mathbb C X_{3\alpha +2\beta} ,\quad L\oplus \mathbb C X_{\beta} \oplus \mathbb C X_{\alpha +\beta} \oplus \mathbb C X_{2\alpha +\beta} \oplus \mathbb C X_{3\alpha +\beta} \oplus \mathbb C X_{3\alpha +2\beta},
$$
parametrized by a vector subspace $L\subset \mathfrak{h}$, $dim(L)=1$.\\

There is a regular nilpotent subalgebra of dimension $6$:
$$
\mathbb C X_{\alpha} \oplus \mathbb C X_{\beta} \oplus \mathbb C X_{\alpha +\beta} \oplus \mathbb C X_{2\alpha +\beta} \oplus \mathbb C X_{3\alpha +\beta} \oplus \mathbb C X_{3\alpha +2\beta}.
$$

Let $\mathfrak{g}=\mathbb C X_{\beta} \oplus \mathbb C X_{\alpha +\beta} \oplus \mathbb C X_{2\alpha +\beta} \oplus \mathbb C X_{3\alpha +\beta} \oplus \mathbb C X_{3\alpha +2\beta}$. Suppose $L\oplus \mathfrak{g}$ is conjugate to $L'\oplus \mathfrak{g}$ by $s\in Aut(G_2)$.\\

Since $[\mathfrak{g},\mathfrak{g}]=\mathbb C X_{3\alpha +2\beta}$ maps to itself, $s(X_{3\alpha +2\beta})=\lambda X_{3\alpha +2\beta}$.\\

Let 
$$
s(x)=z+\sum\limits_{\gamma \in {\Phi}^{+}\setminus \{ \alpha \}} c_{\gamma}X_{\gamma}, \quad s(y)=w+\sum\limits_{\rho \in {\Phi}} u_{\rho}X_{\rho},
$$
where $L=\mathbb C x$, $L'=\mathbb C z$, $y\in \mathfrak{h}$ is arbitrary, $w\in \mathfrak{h}$. Since $s$ preserves the Lie bracket $[y, X_{3\alpha +2\beta}]$, $u_{\rho}=0$ unless $\rho \in {\Phi}^{+}\cup \{ -\alpha \}$. Then
$$
0=s([x,y])=[s(x),s(y)]=u_{-\alpha}\cdot (-\alpha)(z)\cdot X_{-\alpha}+u_{\alpha}\cdot \alpha (z)\cdot X_{\alpha}+...
$$

Hence either $\alpha (z)=0$ or $u_{\alpha}=u_{-\alpha}=0$. If $\alpha (L)=\alpha (L')=0$, then $L=L'$. Otherwise, $u_{\alpha}=u_{-\alpha}=0$, i.e. $s(\mathfrak{h}\oplus \mathfrak{g})\subset \mathfrak{h}\oplus \mathfrak{g}$. By the conjugacy of Cartan subalgebras, we may assume that $s(\mathfrak{h})= \mathfrak{h}$. Hence $L$ and $L'$ lie in the same orbit of the Weyl group. Hence $L=L'$ by the condition imposed on $L$ in Table~\ref{table:1}:
$$
\alpha \cdot (3\alpha +2\beta) (L)\geq 0.
$$

Let $\mathfrak{g}=\mathbb C X_{\alpha} \oplus \mathbb C X_{\alpha +\beta} \oplus \mathbb C X_{2\alpha +\beta} \oplus \mathbb C X_{3\alpha +\beta} \oplus \mathbb C X_{3\alpha +2\beta}$. Suppose $L\oplus \mathfrak{g}$ is conjugate to $L'\oplus \mathfrak{g}$ by $s\in Aut(G_2)$. Then $[\mathfrak{g},\mathfrak{g}]=\mathbb C X_{2\alpha +\beta} \oplus \mathbb C X_{3\alpha +\beta} \oplus \mathbb C X_{3\alpha +2\beta}$ and $[\mathfrak{g},[\mathfrak{g},\mathfrak{g}]]=\mathbb C X_{3\alpha +\beta} \oplus \mathbb C X_{3\alpha +2\beta}$ map to themselves.\\

Hence $s(X_{3\alpha +\beta})=a_3X_{3\alpha +\beta}+a_4X_{3\alpha +2\beta}$, $s(X_{3\alpha +2\beta})=b_3X_{3\alpha +\beta}+b_4X_{3\alpha +2\beta}$ and $s(X_{2\alpha +\beta})=c_2X_{2\alpha +\beta}+c_3X_{3\alpha +\beta}+c_4X_{3\alpha +2\beta}$.\\

Let $s(x)=z+\sum\limits_{\gamma\in {\Phi}^{+}\setminus\{\beta\}} u_{\gamma} X_{\gamma}$, $L=\mathbb C x$, $L'=\mathbb C z$. Since $s$ preserves the Lie bracket $[x,X_{2\alpha +\beta}]$ and $c_2\neq 0$, $(2\alpha +\beta)(x-z)=0$.\\ 

Since $s$ preserves the Lie bracket $[x,X_{3\alpha +\beta}]$, $a_3\cdot (3\alpha+\beta) (x-z)=0$. If $a_3\neq 0$, then $(3\alpha+\beta) (x-z)=0$, i.e. $x=z$. Hence $L=L'$. If $a_3=0$, compose $s$ with a Weyl group element $w\in Aut(G_2)$, which acts on ${\mathfrak{h}}^{*}$ as a reflection through $2\alpha + \beta$. Hence we may assume that $a_3\neq 0$, and so $L=L'$ or $L=w(L')$. In the latter case, $L=L'$ by the condition imposed on $L$ in Table~\ref{table:1}:
$$
\beta \cdot (2\alpha +\beta)(L)\geq 0.
$$

Now consider subalgebras of dimension $7$.\\

There is one regular subalgebra of dimension $7$ with the $4$-dimensional radical:
$$
A_1\oplus \mathbb C H_{2\alpha +\beta} \oplus \mathbb C X_{2\alpha +\beta} \oplus \mathbb C X_{3\alpha +\beta}\oplus \mathbb C X_{3\alpha +2\beta}.
$$

There are two regular solvable subalgebras of dimension $7$ with $5$-dimensional subalgebras of nilpotent elements:
$$
\mathfrak{h}\oplus \mathbb C X_{\alpha} \oplus \mathbb C X_{\alpha +\beta} \oplus \mathbb C X_{2\alpha +\beta} \oplus \mathbb C X_{3\alpha +\beta} \oplus \mathbb C X_{3\alpha +2\beta} ,\quad \mathfrak{h}\oplus \mathbb C X_{\beta} \oplus \mathbb C X_{\alpha +\beta} \oplus \mathbb C X_{2\alpha +\beta} \oplus \mathbb C X_{3\alpha +\beta} \oplus \mathbb C X_{3\alpha +2\beta}.
$$

There is one type of regular solvable subalgebras of dimension $7$ with $6$-dimensional subalgebras of nilpotent elements:
$$
L\oplus \mathbb C X_{\alpha} \oplus \mathbb C X_{\beta} \oplus \mathbb C X_{\alpha +\beta} \oplus \mathbb C X_{2\alpha +\beta} \oplus \mathbb C X_{3\alpha +\beta} \oplus \mathbb C X_{3\alpha +2\beta},
$$
parametrized by a vector subspace $L\subset \mathfrak{h}$, $dim(L)=1$.\\

Let $\mathfrak{g}=\mathbb C X_{\alpha} \oplus \mathbb C X_{\beta} \oplus \mathbb C X_{\alpha +\beta} \oplus \mathbb C X_{2\alpha +\beta} \oplus \mathbb C X_{3\alpha +\beta} \oplus \mathbb C X_{3\alpha +2\beta}$. Suppose $L\oplus \mathfrak{g}$ is conjugate to $L'\oplus \mathfrak{g}$ by $s\in Aut(G_2)$. Then $[\mathfrak{g},[\mathfrak{g},[\mathfrak{g},\mathfrak{g}]]]=\mathbb C X_{3\alpha +\beta} \oplus \mathbb C X_{3\alpha +2\beta}$ and $[\mathfrak{g},[\mathfrak{g},[\mathfrak{g},[\mathfrak{g},\mathfrak{g}]]]]=\mathbb C X_{3\alpha +2\beta}$ map to themselves.\\

Hence $s(X_{3\alpha +2\beta})=\lambda X_{3\alpha +2\beta}$, $s(X_{3\alpha +\beta})=a_4X_{3\alpha +\beta}+a_5X_{3\alpha +2\beta}$, where $a_4\neq 0$.\\

Let $s(x)=z+\sum\limits_{\gamma\in {\Phi}^{+}} u_{\gamma} X_{\gamma}$, $L=\mathbb C x$, $L'=\mathbb C z$. Since $s$ preserves the Lie bracket $[x,X_{3\alpha +2\beta}]$, $(3\alpha +2\beta)(x-z)=0$.\\ 

Since $s$ preserves the Lie bracket $[x,X_{3\alpha +\beta}]$, $(3\alpha+\beta) (x-z)=0$. Hence $x=z$, i.e. $L=L'$.\\

Now consider subalgebras of dimension $8$.\\

There is one regular semisimple subalgebra of dimension $8$:
$$
A_2.
$$

There is one regular solvable subalgebra of dimension $8$:
$$
\mathfrak{b}.
$$

There are two regular subalgebras of dimension $8$, which are neither solvable nor semisimple:
$$
A_1\oplus \mathbb C X_{\alpha} \oplus \mathbb C X_{\alpha +\beta} \oplus \mathbb C X_{2\alpha +\beta} \oplus \mathbb C X_{3\alpha +\beta} \oplus \mathbb C X_{3\alpha +2\beta},
$$
$$
\tilde{A_1}\oplus \mathbb C X_{\beta} \oplus \mathbb C X_{\alpha +\beta} \oplus \mathbb C X_{2\alpha +\beta} \oplus \mathbb C X_{3\alpha +\beta} \oplus \mathbb C X_{3\alpha +2\beta}.
$$

They are not conjugate, because their radicals are not conjugate.\\

Finally, there are two regular subalgebras of dimension $9$:
$$
G_2[\alpha],\quad G_2[\beta].
$$

They are not conjugate, because their radicals are not conjugate.\\

\end{proof}

An element $x\in G_2$ is called \textit{regular} if $ad_x \colon G_2\to G_2$, $y\mapsto [x,y]$ has maximal possible number of distinct eigenvalues \cite{Dynkin}. The following criterion will be useful for us later.\\

\begin{theorem}\label{lemma:1}(\cite{Dynkin}, Theorem 6.1)
If $x\in G_2$ is a regular element, then any subalgebra $\gg\subset G_2$ containing $x$ is regular. 
\end{theorem}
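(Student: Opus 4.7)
The plan is to exploit the fact that a regular element must be semisimple and, on the unique Cartan subalgebra containing it, must take pairwise distinct values on distinct roots.

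First, I would unpack the definition of regularity. On the $14$-dimensional space $G_2$, the operator $ad_x$ has at most $\dim G_2 - rk(G_2) + 1 = 13$ distinct eigenvalues, because $ad_x$ shares its eigenvalues with its semisimple part $ad_{x_s}$ and $\dim \ker(ad_{x_s}) \geq rk(G_2) = 2$. Regularity of $x$ means this upper bound is attained. This simultaneously forces $\dim \ker(ad_{x_s}) = 2$, so that $x_s$ is a classical regular semisimple element lying in the unique Cartan subalgebra $\mathfrak{h}' := \ker(ad_{x_s})$ of $G_2$; it forces $x_n = 0$, since $x_n$ commutes with $x_s$ and therefore lies in $\mathfrak{h}'$, but $\mathfrak{h}'$ consists of semisimple elements; and it forces $\gamma(x) \neq \gamma'(x)$ for any two distinct roots $\gamma \neq \gamma'$ of $G_2$ with respect to $\mathfrak{h}'$, since otherwise the count of distinct nonzero eigenvalues of $ad_x$ would drop below $12$.

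Now let $\mathfrak{g} \subset G_2$ be any subalgebra containing $x$. Since $x \in \mathfrak{g}$, the subspace $\mathfrak{g}$ is $ad_x$-stable. The root space decomposition
$$
G_2 = \mathfrak{h}' \oplus \bigoplus_{\gamma \in \Phi'} \mathbb{C} X_\gamma'
$$
with respect to $\mathfrak{h}'$ is precisely the decomposition of $G_2$ into eigenspaces of $ad_x$: the zero eigenspace is $\mathfrak{h}'$, and each one-dimensional root space $\mathbb{C} X_\gamma'$ is its own eigenspace with eigenvalue $\gamma(x)$, by the pairwise distinctness established above. Consequently any $ad_x$-stable subspace, and in particular $\mathfrak{g}$, decomposes as the direct sum of its intersections with $\mathfrak{h}'$ and with each $\mathbb{C} X_\gamma'$. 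This immediately yields
$$
\mathfrak{g} = L \oplus \bigoplus_{\gamma \in \Sigma} \mathbb{C} X_\gamma'
$$
for some $L \subset \mathfrak{h}'$ and some $\Sigma \subset \Phi'$.

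Finally, the subalgebra condition on $\mathfrak{g}$ forces the data $(\Sigma, L)$ to satisfy the requirements from Section~3: $\Sigma$ is closed because $[X_\gamma', X_{\gamma'}'] = N_{\gamma, \gamma'} \cdot X_{\gamma + \gamma'}' \in \mathfrak{g}$ whenever $\gamma, \gamma' \in \Sigma$ and $\gamma + \gamma' \in \Phi'$, and $[X_\gamma', X_{-\gamma}'] = H_\gamma' \in L$ for every $\gamma \in \Sigma \cap (-\Sigma)$. Thus $\mathfrak{g} = \mathfrak{g}(\Sigma, L)$ is regular with respect to the Cartan $\mathfrak{h}'$. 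The main obstacle is the initial characterization of what ``regular'' forces on $x$ (semisimplicity plus pairwise distinct root values on its Cartan); once this is in hand, the eigenspace decomposition of $ad_x$ is rigid enough that it immediately reads off the regular form of any subalgebra containing $x$.
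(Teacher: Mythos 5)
Your proof is correct. Note, however, that the paper does not prove this statement at all: it is quoted as Theorem~6.1 of Dynkin's paper, so there is no internal argument to compare against. Your self-contained derivation is the standard one and all the steps check out: the count $\dim G_2 - \operatorname{rk}(G_2) + 1 = 13$ is the right upper bound because the spectrum of $ad_x$ coincides with that of $ad_{x_s}$ and the Cartan subalgebra through $x_s$ forces the zero eigenvalue to have multiplicity at least $2$; attaining $13$ forces $x_n \in \ker(ad_{x_s}) = \mathfrak{h}'$, hence $x_n = 0$, and forces the values $\gamma(x)$ to be pairwise distinct and nonzero; and then any $ad_x$-invariant subspace splits along the eigenspace decomposition $\mathfrak{h}' \oplus \bigoplus_\gamma \mathbb{C}X'_\gamma$, with the subalgebra axioms delivering exactly the closedness of $\Sigma$ and the condition $[X'_\gamma, X'_{-\gamma}] \in L$ for $\gamma \in \Sigma \cap (-\Sigma)$ required by the definition in Section~3. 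The one point worth making explicit is that you obtain regularity with respect to the Cartan subalgebra $\mathfrak{h}'$ determined by $x$ rather than the fixed $\mathfrak{h}$; since all Cartan subalgebras are conjugate this is exactly the form in which the theorem is used throughout the paper (where ``regular'' is always read up to conjugacy), but a sentence acknowledging the conjugation would make the statement match the paper's usage precisely.
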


\section{Semisimple subalgebras (\cite{Malcev}, \cite{Dynkin})}

Semisimple subalgebras of complex semisimple Lie algebras were classified by Malcev \cite{Malcev} and Dynkin \cite{Dynkin}. In this section we collect their results for $G_2$. The superscript in the notation for a simple subalgebra in Table~\ref{table:2} denotes its Dynkin index in $G_2$ (see \cite{Dynkin} for the definition). An embedding of $\mathfrak{sl}(2,\mathbb C)$ into $G_2$ is specified in terms of the generators $f, e_{+}, e_{-}\in G_2$ of $\mathfrak{sl}(2,\mathbb C)$ satisfying the relations
$$
[f,e_{+}]=2e_{+},\quad [f,e_{-}]=-2e_{-},\quad [e_{+},e_{-}]=f.
$$

\begin{center}
\begin{longtable}{|c|c|}
\caption{Semisimple non-regular subalgebras of $G_2$ up to conjugacy \cite{Malcev}, \cite{Dynkin}}\label{table:2}\\
\hline
Subalgebra & Embedding in $G_2$
\\ \hline\hline
	$A_1^4$ & $f=2H_{3\alpha +\beta}$, $e_{+}=\sqrt{2}\cdot (X_{-\beta}+X_{3\alpha +2\beta})$, $e_{-}=\sqrt{2}\cdot (X_{\beta}+X_{-(3\alpha +2\beta )})$
	\\ \hline
	$A_1^{28}$ & $f=14\cdot H_{9\alpha +5\beta}$, $e_{+}=\sqrt{6}\cdot X_{\alpha}+\sqrt{10}\cdot X_{\beta}$, $e_{-}=\sqrt{6}\cdot X_{-\alpha}+\sqrt{10}\cdot X_{-\beta}$
  \\ \hline
\end{longtable}
\end{center}

Together with Table~\ref{table:1}, this gives the complete list of semisimple subalgebras of $G_2$ up to conjugacy \cite{Malcev}, \cite{Dynkin}:
$$
A_2,\quad A_1+\tilde{A_1},\quad A_1=A_1^1,\quad \tilde{A_1}=A_1^3,\quad A_1^4,\quad A_1^{28}.
$$

We will need the following decompositions of $G_2$ as the adjoint representation of its semisimple subalgebras. Let $D_s$, $s\geq 0$, $s\in \frac{1}{2}{\mathbb Z}$, denote the irreducible representation of $\mathfrak{sl}(2,\mathbb C)$ of dimension $2s+1$, $[1,0]$ the defining representation of $\mathfrak{sl}(3,\mathbb C)$ on ${\mathbb C}^3$, $[0,1]$ its dual, ${\mathbb C}^k=D_0^{\oplus k}$ the trivial (zero) representation of $\mathfrak{sl}(2,\mathbb C)$ of dimension $k$.\\

\begin{proposition}\label{proposition:2}
Let $\mathfrak{g}\subset G_2$ be a semisimple subalgebra. The restriction of the adjoint representation of $G_2$ to $\mathfrak{g}$ decomposes as follows.
\begin{itemize}
\item If $\mathfrak{g}=A_2=G_2(\alpha)$, then
$$
G_2=G_2(\alpha)\oplus W_1 \oplus W_2,
$$ 
where 
$$
W_1=\mathbb C X_{\alpha}\oplus\mathbb C X_{\alpha +\beta}\oplus\mathbb C X_{-(2\alpha +\beta)} \cong [1,0],\quad W_2=\mathbb C X_{-\alpha}\oplus\mathbb C X_{-(\alpha +\beta )}\oplus\mathbb C X_{2\alpha +\beta} \cong [0,1]
$$
as representations of $A_2$. 
\item If $\mathfrak{g}=A_1+\tilde{A_1}=G_2(\beta)$, then
$$
G_2=G_2(\beta)\oplus W_{1\tilde{1}},
$$ 
where 
\begin{align*}
& W_{1\tilde{1}}=\mathbb C X_{\beta}\oplus \mathbb C X_{-\beta}\oplus\mathbb C X_{\alpha +\beta}\oplus\mathbb C X_{-(\alpha +\beta )}\oplus\mathbb C X_{2\alpha +\beta} \oplus\mathbb C X_{-(2\alpha +\beta)}\\
& \oplus\mathbb C X_{3\alpha +\beta}\oplus\mathbb C X_{-(3\alpha +\beta)} \cong D_{1/2}\otimes_{\mathbb C} D_{3/2}
\end{align*}
as representations of $A_1+\tilde{A_1}$.
\item If $\mathfrak{g}=A_1^{28}$, then
$$
G_2=A_1^{28}\oplus V_1^{28},
$$ 
where 
\begin{align*}
& V_1^{28}=\mathbb C (H_{\alpha}-3H_{\beta})\oplus \mathbb C X_{\alpha +\beta}\oplus\mathbb C X_{-(\alpha +\beta )}\oplus\mathbb C X_{2\alpha +\beta}\oplus\mathbb C X_{-(2\alpha +\beta)}\\
& \oplus\mathbb C X_{3\alpha +\beta}\oplus\mathbb C X_{-(3\alpha +\beta)}\mathbb C X_{3\alpha +2\beta}\oplus \mathbb C X_{-(3\alpha +2\beta)}\\
& \oplus \mathbb C (\sqrt{15}\cdot X_{\alpha}-9\cdot X_{\beta})\oplus \mathbb C (\sqrt{15}\cdot X_{-\alpha}-9\cdot X_{-\beta}) \cong D_{5}
\end{align*}
as representations of $A_1$.
\item If $\mathfrak{g}=A_1^{4}$, then
$$
G_2=A_1^{4}\oplus V_1^{4}\oplus V_2^{4}\oplus V_3^{4},
$$ 
where 
\begin{align*}
& V_1^{4}=\mathbb C X_{\alpha} \oplus \mathbb C X_{\alpha +\beta}\oplus\mathbb C X_{-(2\alpha +\beta )} \cong D_{1},\\
& V_2^{4}=\mathbb C X_{-\alpha} \oplus \mathbb C X_{-(\alpha +\beta )}\oplus\mathbb C X_{2\alpha +\beta} \cong D_{1},\\
& V_3^{4}= \mathbb C X_{3\alpha +\beta}\oplus\mathbb C X_{-(3\alpha +\beta)}\oplus\mathbb C H_{\alpha +\beta}\oplus \mathbb C (X_{\beta}- X_{-(3\alpha +2\beta)})\oplus \mathbb C (X_{-\beta}- X_{3\alpha +2\beta}) \cong D_{2}
\end{align*}
as representations of $A_1$.
\item If $\mathfrak{g}=A_1^{3}=\tilde{A_1}$, then  
$$
G_2=\tilde{A_1}\oplus \tilde{V_1}\oplus \tilde{V_2}\oplus \tilde{V_3},
$$ 
where 
\begin{align*}
& \tilde{V_1}=\mathbb C X_{\beta} \oplus \mathbb C X_{\alpha +\beta}\oplus\mathbb C X_{2\alpha +\beta}\oplus\mathbb C X_{3\alpha +\beta} \cong D_{3/2},\\
& \tilde{V_2}=\mathbb C X_{-\beta} \oplus \mathbb C X_{-(\alpha +\beta )}\oplus\mathbb C X_{-(2\alpha +\beta )}\oplus\mathbb C X_{-(3\alpha +\beta )} \cong D_{3/2},\\
& \tilde{V_3}= \mathbb C H_{3\alpha +2\beta}\oplus \mathbb C X_{3\alpha +2\beta}\oplus\mathbb C X_{-(3\alpha +2\beta)} \cong {\mathbb C}^3
\end{align*}
as representations of $\tilde{A_1}$.
\item If $\mathfrak{g}=A_1^{1}=A_1$, then
$$
G_2=A_1\oplus V_1\oplus V_2\oplus V_3\oplus V_4\oplus V_5,
$$ 
where 
\begin{align*}
& V_1=\mathbb C X_{\alpha} \oplus \mathbb C X_{\alpha +\beta} \cong D_{1/2},\\
& V_2=\mathbb C X_{-\alpha} \oplus \mathbb C X_{-(\alpha +\beta )} \cong D_{1/2},\\
& V_3=\mathbb C X_{3\alpha +\beta} \oplus \mathbb C X_{3\alpha +2\beta} \cong D_{1/2},\\
& V_4=\mathbb C X_{-(3\alpha +\beta )} \oplus \mathbb C X_{-(3\alpha +2\beta )} \cong D_{1/2},\\
& V_5=\mathbb C H_{2\alpha +\beta} \oplus \mathbb C X_{2\alpha +\beta} \oplus \mathbb C X_{-(2\alpha +\beta )} \cong {\mathbb C}^3
\end{align*}
as representations of $A_1$.
\end{itemize}
\end{proposition}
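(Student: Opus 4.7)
The plan is to verify each of the six decompositions separately, using a common three-step template: first exhibit $\mathfrak{g}\subset G_2$ explicitly, then compute the weights of the listed complement under a Cartan subalgebra of $\mathfrak{g}$ (or under the element $f$ in the $\mathfrak{sl}(2,\mathbb{C})$-triple cases), and finally identify the irreducible summands by producing highest-weight vectors and matching dimensions and characters.

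For the four regular cases ($A_2$, $A_1+\tilde{A_1}$, $\tilde{A_1}$, $A_1$), the complement of $\mathfrak{g}$ in $G_2$ is spanned by the root vectors $X_\gamma$ with $\gamma$ outside the root system of $\mathfrak{g}$, together (for $\tilde{A_1}$ and $A_1$) with a one-dimensional piece of $\mathfrak{h}$. Since the Cartan of $\mathfrak{g}$ lies in $\mathfrak{h}$, each $X_\gamma$ is automatically a weight vector with weight read off from $[H,X_\gamma]=\gamma(H)X_\gamma$. Grouping weights by the $\mathfrak{g}$-Weyl-orbit and checking closure under the bracket relations of Section~2 identifies the irreducible pieces: the six missing roots for $A_2$ split into two Weyl-conjugate triples giving $[1,0]\oplus[0,1]$; the eight missing roots for $A_1+\tilde{A_1}$ form the $2\times 4$ weight grid of $D_{1/2}\otimes D_{3/2}$; and for $\tilde{A_1}$ and $A_1$ the trivial summand consists of the Cartan element and root vectors whose roots are orthogonal to $\alpha$ (resp.\ $\beta$), while the remaining roots assemble into the claimed $\alpha$-strings (resp.\ $\beta$-strings) of length $4$ and $2$.

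For the two non-regular embeddings $A_1^{4}$ and $A_1^{28}$, I would use the explicit generators to compute $ad_f$-eigenvalues directly: $[f,X_\gamma]=2\gamma(H_{3\alpha+\beta})X_\gamma$ in the first case and $[f,X_\gamma]=14\gamma(H_{9\alpha+5\beta})X_\gamma$ in the second. Listing these eigenvalues and summing multiplicities produces the $\mathfrak{sl}(2,\mathbb{C})$-character of the complement, which together with the dimension already forces the isomorphism class of the $\mathfrak{sl}(2,\mathbb{C})$-module: $D_1\oplus D_1\oplus D_2$ for $A_1^{4}$, and a single $D_5$ for $A_1^{28}$. To pin down the explicit bases one exhibits the highest-weight vectors (those annihilated by $ad_{e_{+}}$) and generates each irreducible $D_s$ by iterating $ad_{e_{-}}$, using the structure constants $N_{\mu,\nu}$ from Section~2 to match the coefficients term by term.

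The main obstacle will be the bookkeeping for $A_1^{28}$: the listed basis of $V_1^{28}$ mixes Cartan and root-space elements via coefficients such as $\sqrt{15}$, $-9$, and $-3$, which must emerge from the recursion $ad_{e_{-}}^{k}$ applied to the unique highest-weight vector $X_{3\alpha+2\beta}$, with the $\sqrt{6}$ and $\sqrt{10}$ scalings of $e_{\pm}$ combining with the Chevalley constants and the binomial normalizations of the $D_s$-basis to reproduce the claimed vectors at each step. For $A_1^{4}$ and the four regular cases, the analogous verifications reduce to routine applications of the multiplication table and the root-string description.
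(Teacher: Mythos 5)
Your plan is correct and would go through: the weight/character computations you describe do yield exactly the listed decompositions (e.g.\ the $ad_f$-eigenvalues $0,\pm2,\dots,\pm10$ on the complement of $A_1^{28}$ force $D_5$, and $ad_{e_-}(X_{3\alpha+\beta})=\sqrt{2}(X_{3\alpha+2\beta}-X_{-\beta})$ for $A_1^4$ reproduces the stated basis of $V_3^4$). The paper's own proof consists of the single sentence ``This can be checked by a matrix computation,'' so your proposal is simply a fully worked-out organization of that same direct verification.
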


\begin{proof}
This can be checked by a matrix computation. 
\end{proof}

\section{Non-solvable non-semisimple subalgebras}

\begin{theorem}\label{Theorem2}
If $\mathfrak{g}\subset G_2$ is a non-semisimple non-solvable subalgebra, then $\mathfrak{g}\subset G_2$ is regular.
\end{theorem}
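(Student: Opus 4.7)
By the Levi--Malcev theorem, write $\mathfrak{g} = L \ltimes R$ where $L$ is a semisimple Levi subalgebra and $R = \operatorname{rad}(\mathfrak{g})$; the hypotheses give $L \neq 0$ and $R \neq 0$. The Levi $L$ is a semisimple subalgebra of $G_2$, so by the Malcev--Dynkin classification (Tables~\ref{table:1} and \ref{table:2}) it is conjugate in $G_2$ to one of $A_2$, $A_1+\tilde{A_1}$, $A_1 = A_1^1$, $\tilde{A_1} = A_1^3$, $A_1^4$, or $A_1^{28}$. Since $R$ is an ideal of $\mathfrak{g}$ disjoint from $L$, it is an $L$-submodule of $G_2$ under the adjoint action; the plan is to use the $L$-module decompositions of $G_2$ in Proposition~\ref{proposition:2} to enumerate the possibilities for $R$ in each case.

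For $L \in \{A_2, A_1+\tilde{A_1}, A_1^{28}\}$: the complement of $L$ in $G_2$ is either an irreducible $L$-module (for $A_1^{28}$ the $D_5$, for $A_1+\tilde{A_1}$ the $D_{1/2}\otimes D_{3/2}$) or splits as non-isomorphic irreducibles $W_1 \oplus W_2$ neither of which is closed under the bracket of $G_2$ (the $A_2$ case, as one sees from e.g.\ $[X_\alpha, X_{\alpha+\beta}] = -2 X_{2\alpha+\beta} \in W_2 \setminus W_1$). In every sub-case, the only $L$-submodules $M$ for which $L \oplus M$ is a subalgebra are $M = 0$ and $M = G_2/L$, giving $\mathfrak{g} = L$ or $\mathfrak{g} = G_2$, both semisimple---contradiction. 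For $L = A_1^4$: direct computation from the explicit generators in Table~\ref{table:2} and Proposition~\ref{proposition:2} yields $L \oplus V_3^4 = A_2 = G_2(\alpha)$; checking closure on the remaining $L$-submodules of $V_1^4 \oplus V_2^4 \oplus V_3^4$ (including the $\mathbb{P}^1$-family of diagonal $D_1$'s in $V_1^4 \oplus V_2^4$) rules them out, because $[V_1^4, V_2^4]$ has a nonzero $V_3^4$-component arising from the fact that $H_\alpha$ has a nonzero $H_{\alpha+\beta}$-projection in the decomposition $\mathfrak{h} = \mathbb{C}H_{3\alpha+\beta} \oplus \mathbb{C}H_{\alpha+\beta}$ coming from $L \oplus V_3^4 = A_2$. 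So the only subalgebras are $L$, $A_2$, and $G_2$, all semisimple---again a contradiction.

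Hence $L \in \{A_1^1, A_1^3\}$, both regular. Treat $L = A_1^1 = A_1$ (the case $L = A_1^3$ is entirely analogous, with the roles of short and long roots interchanged). Here $C_{G_2}(L) = V_5 \cong \tilde{A_1}$, and $R$ splits as $R = R_0 \oplus R_+$ with $R_0 = R \cap V_5$ (trivial isotypic part) and $R_+ \subset V_1 \oplus V_2 \oplus V_3 \oplus V_4$ (the $D_{1/2}$-isotypic part). If $R_0$ contains a semisimple element of $V_5$, then after conjugation by the connected $\tilde{A_1} \subset N_{G_2}(L)$ (which fixes $L$ pointwise) one may assume $H_{2\alpha+\beta} \in R$, so $\mathfrak{h} = \mathbb{C}H_\beta + \mathbb{C}H_{2\alpha+\beta} \subset \mathfrak{g}$ and $\mathfrak{g}$, being $\operatorname{ad}_{\mathfrak{h}}$-stable, is a direct sum of $\mathfrak{h}$-root spaces and a subspace of $\mathfrak{h}$---hence regular. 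Otherwise $R_0$ is zero or (up to $V_5$-conjugation) the nilpotent line $\mathbb{C}X_{2\alpha+\beta}$. Viewing $V_1 \oplus V_2 \oplus V_3 \oplus V_4 \cong D_{1/2} \otimes D_{3/2}$ as $L \times \tilde{A_1}$-module, with the $\tilde{A_1}$-weight basis of the $D_{3/2}$-factor given by $V_3, V_1, V_2, V_4$ at weights $3, 1, -1, -3$, the $L$-submodule $R_+$ corresponds to a subspace of $D_{3/2}$. Using the closure conditions $[R_0, R_+] \subset R$ and $[R_+, R_+] \subset \mathfrak{g}$, together with the solvability of $R$, one shows that $R_+$ must correspond to a $\tilde{A_1}$-Cartan-invariant subspace of $D_{3/2}$: for instance if $R_+$ is a single $D_{1/2}$-copy and $R_0 = \mathbb{C}X_{2\alpha+\beta}$, then $[X_{2\alpha+\beta}, R_+] \subset \mathfrak{g}$ forces $R_+$ to be the $\tilde{A_1}$-highest weight line, $R_+ = V_3$, and an analogous analysis handles the higher-dimensional cases. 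After the $V_5$-conjugation that places the Cartan of $V_5$ at $\mathbb{C}H_{2\alpha+\beta}$, $R_+$ becomes a direct sum of some of the $V_i$'s---each a pair of $\mathfrak{h}$-root spaces---so together with $R_0 \subset \mathbb{C}X_{2\alpha+\beta}$ and $L$ (regular), $\mathfrak{g}$ is a direct sum of $\mathfrak{h}$-root spaces and a subspace of $\mathfrak{h}$, i.e.\ a regular subalgebra.

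The main obstacle is the final sub-case: rigorously verifying that solvability of $R$ and closure of $\mathfrak{g}$ force $R_+$ to be a sum of $\tilde{A_1}$-weight lines in $D_{3/2}$. This requires a careful case analysis based on the dimension of $R_+$ and the behavior of the raising/lowering operators $\operatorname{ad}_{X_{\pm(2\alpha+\beta)}}$, tracking how each bracket $[R_+, R_+]$ distributes among the $L$-isotypic components $L, V_5, V_1, \dots, V_4$ of $G_2$. The earlier steps reduce to direct checks with Proposition~\ref{proposition:2} and the structure constants of $G_2$.
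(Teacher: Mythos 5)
Your route is genuinely different from the paper's. The paper does not begin from the classification of semisimple subalgebras: Lemma~\ref{lemma:2} invokes Dynkin's theorem that every non-semisimple subalgebra of $G_2$ is conjugate into one of the maximal subalgebras $G_2(\alpha)$, $G_2(\beta)$, $G_2[\alpha]$, $G_2[\beta]$, whose Levi parts are all $3$-dimensional, so the Levi of $\mathfrak{g}$ is immediately $A_1$ or $\tilde{A_1}$; and, crucially, the subsequent module analysis in Lemmas~\ref{lemma:3} and~\ref{lemma:4} then takes place \emph{inside} the ambient maximal subalgebra, where the $D_{1/2}$-isotypic part available to $\operatorname{rad}(\mathfrak{g})$ is only $V_1\oplus V_3$, or $V_3\oplus V_4$, or nothing at all. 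You instead rule out the Levis $A_2$, $A_1+\tilde{A_1}$, $A_1^{28}$, $A_1^4$ by hand from Proposition~\ref{proposition:2} (those eliminations are correct, and the irreducibility arguments are cleaner than a case check) and then work in all of $G_2$, so your $R_+$ ranges over submodules of the full $4$-dimensional multiplicity space $D_{3/2}$. What your approach buys is independence from Dynkin's maximal-subalgebra theorem; what it costs is exactly the extra work in the last step.

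Two points need attention before this is a complete proof. First, for $L=A_1^4$ the adjoint representation $D_1$ recurs in $V_1^4\oplus V_2^4$, so the radical need not lie in $V_1^4\oplus V_2^4\oplus V_3^4$: a copy of $D_1$ inside $R$ may be a graph over $L$ itself. This is repairable --- replace $R$ by its $L$-equivariant projection to the complement of $L$, which still satisfies $\mathfrak{g}=L\oplus\operatorname{proj}(R)$ --- but as written the enumeration of submodules is incomplete. Second, and more seriously, the crux --- that every admissible $U\subset D_{3/2}$ is conjugate under the centralizer $V_5\cong\tilde{A_1}$ to a sum of weight lines --- is carried out only for $\dim U=1$ with $R_0=\mathbb{C}X_{2\alpha+\beta}$. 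The remaining cases are genuinely nontrivial: for $R_0=0$, $\dim U=1$ the constraint is the vanishing of the $V_5$-component of $[R_+,R_+]$, i.e.\ of the second transvectant $\mathrm{Sym}^2D_{3/2}\to D_1$ on $u\odot u$, which forces $u$ to be a perfect cube and hence a highest-weight vector for \emph{some} Borel of $V_5$ (this does work, but it is a computation, not an observation); and the cases $\dim U=2,3$ with the various $R_0$ are only asserted. Until those are written out, what you have is a correct and workable plan whose hardest step is sketched; the paper's prior reduction to $G_2[\alpha]$, $G_2(\alpha)$, $G_2(\beta)$ is precisely the device that makes this step short there.
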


\begin{proof}
This follows from Lemma~\ref{lemma:2}, Lemma~\ref{lemma:3} and Lemma~\ref{lemma:4}.
\end{proof}

\begin{lemma}\label{lemma:2}
If $\mathfrak{g}\subset G_2$ is a non-semisimple non-solvable subalgebra, then a Levi subalgebra of $\mathfrak{g}$ is conjugate in $G_2$ to either $A_1^{1}=A_1$ or $A_1^{3}=\tilde{A_1}$.
\end{lemma}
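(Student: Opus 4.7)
The plan is to apply the Malcev--Dynkin classification (Tables~\ref{table:1} and~\ref{table:2}) to reduce $L:=\operatorname{Levi}(\mathfrak{g})$ to one of six conjugacy classes in $G_2$: $A_2$, $A_1+\tilde{A_1}$, $A_1=A_1^1$, $\tilde{A_1}=A_1^3$, $A_1^4$, $A_1^{28}$; then to rule out the four ``forbidden'' choices $A_2$, $A_1+\tilde{A_1}$, $A_1^{28}$, $A_1^4$ by showing that in each case, any subalgebra of $G_2$ strictly containing $L$ is already semisimple with Levi strictly larger than $L$. That forces $\mathfrak{g}=L$, making $\mathfrak{g}$ itself semisimple and contradicting the hypothesis. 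After a conjugation we may assume $L\subset\mathfrak{g}$; since $[L,\mathfrak{g}]\subset\mathfrak{g}$ and $L$ is semisimple, Weyl complete reducibility makes $\mathfrak{g}$ an $L$-submodule of $G_2$ containing $L$, and the argument reduces to enumerating such $L$-submodules via the decompositions in Proposition~\ref{proposition:2}, then checking bracket-closure and semisimplicity case by case.

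For $L\in\{A_2,\,A_1+\tilde{A_1},\,A_1^{28}\}$ the enumeration is immediate. By Proposition~\ref{proposition:2}, $G_2/L$ is irreducible (for $A_1+\tilde{A_1}$ and $A_1^{28}$) or a sum of two non-isomorphic irreducibles $W_1,W_2$ (for $A_2$), so the only $L$-submodules containing $L$ are $L$, $G_2$ and, in the $A_2$ case, $L\oplus W_1$, $L\oplus W_2$; the latter two are ruled out by a single bracket computation such as $[X_\alpha,X_{\alpha+\beta}]=-2X_{2\alpha+\beta}\in W_2$. Hence the only $L$-invariant subalgebras of $G_2$ containing $L$ are $L$ and $G_2$, both semisimple.

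The case $L=A_1^4$ is the main obstacle, because now $G_2=L\oplus V_1^4\oplus V_2^4\oplus V_3^4$ with $V_1^4\cong V_2^4\cong D_1$ coinciding with the adjoint of $L$, so the $D_1$-isotypic has multiplicity three and multiplicity-one Schur arguments no longer suffice. The plan here has three steps. First, Schur applied to the $D_2$-isotypic $V_3^4$ gives $\mathfrak{g}\cap V_3^4\in\{0,V_3^4\}$; if $V_3^4\subset\mathfrak{g}$ then one identifies $L\oplus V_3^4$ with the regular $A_2=G_2(\alpha)$ by collecting its Cartan and the root vectors $X_{\pm\beta},X_{\pm(3\alpha+\beta)},X_{\pm(3\alpha+2\beta)}$, and Malcev's theorem then forces $A_2\subset\operatorname{Levi}(\mathfrak{g})$, contradicting $\dim L=3$. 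Second, within $\mathfrak{g}\subset L\oplus V_1^4\oplus V_2^4$ the full $9$-dimensional option fails bracket-closure because $[X_\alpha,X_{-\alpha}]=H_\alpha\notin\mathbb{C}H_{3\alpha+\beta}=L\cap\mathfrak{h}$. Third, for each $6$-dimensional candidate $L\oplus U$ with $U\cong D_1$, one computes $[u_2,u_0]$, $[u_2,u_{-2}]$, $[u_0,u_{-2}]$ explicitly, discovers that closure forces a cubic condition on the parameter describing $U$, notes that $[U,U]$ has a nonzero $L$-component for every admissible $U$, and then identifies $L\oplus U$ with $\mathfrak{sl}(2,\mathbb{C})\oplus\mathfrak{sl}(2,\mathbb{C})$ by exhibiting its two $3$-dimensional ideals as ``diagonal'' $L$-submodules whose parameters solve an explicit quadratic. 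Such $L\oplus U$ is then semisimple, conjugate in $G_2$ to the regular $A_1+\tilde{A_1}$, and has Levi strictly larger than $A_1^4$. Combining these four exclusions leaves only $L\in\{A_1,\tilde{A_1}\}$, as claimed.
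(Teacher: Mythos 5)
Your argument is correct, but it takes a genuinely different route from the paper's. The paper first invokes Dynkin's Theorems 7.3 and 5.5 to place any non-semisimple subalgebra inside one of the four maximal subalgebras $G_2(\alpha)=A_2$, $G_2(\beta)=A_1+\tilde{A_1}$, $G_2[\alpha]$, $G_2[\beta]$; for the two parabolics the bound $3\le\dim\mathfrak{s}\le\dim\operatorname{Levi}(G_2[\alpha])=3$ plus conjugacy of Levi factors finishes at once, and for the two reductive ones it falls back on Proposition~\ref{proposition:2} and the cited classifications for $A_2$ and $A_1+A_1$. You instead start from the Malcev--Dynkin list of semisimple subalgebras of $G_2$ and eliminate $A_2$, $A_1+\tilde{A_1}$, $A_1^{28}$ and $A_1^{4}$ as possible Levi factors by showing none of them admits a non-semisimple over-algebra in $G_2$. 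The first three are indeed immediate from Proposition~\ref{proposition:2} (irreducible or multiplicity-free complements), and you correctly isolate $A_1^4$ as the only delicate case, since there the $D_1$-isotypic component of $G_2$ has multiplicity three. That case is also the only place where the lemma could conceivably fail: if some diagonal copy $U\cong D_1$ inside $V_1^4\oplus V_2^4$ had $[U,U]=0$, then $A_1^4\oplus U$ would be a non-semisimple non-solvable subalgebra with Levi $A_1^4$. Your sketch survives this test: taking $u_2=aX_{\alpha}+bX_{2\alpha+\beta}$ and $u_{-2}=aX_{-(2\alpha+\beta)}+bX_{-\alpha}$, one finds $[u_2,u_{-2}]=3ab\,H_{3\alpha+\beta}+2b^2X_{\alpha+\beta}+2a^2X_{-(\alpha+\beta)}$, so closure of $A_1^4\oplus U$ forces exactly the cubic $a^3+b^3=0$ with $ab\neq 0$, the $L$-component $3ab\,H_{3\alpha+\beta}$ is then automatically nonzero, and a short check shows no $D_1$-submodule of $A_1^4\oplus U$ is abelian, whence $A_1^4\oplus U$ is semisimple (a conjugate of $G_2(\beta)$). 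What your route buys is independence from Dynkin's maximal-subalgebra theorem and, as a by-product, a complete description of the over-algebras of each forbidden semisimple subalgebra; the price is the heavier $A_1^4$ computation, which the paper's reduction sidesteps because $A_1^4$ only ever has to be excluded inside $A_2$ and $A_1+\tilde{A_1}$, where the relevant multiplicities are at most one. To turn your plan into a finished proof, the explicit bracket computations in the $A_1^4$ case (in particular the verification that no admissible $U$ is abelian) must be written out, but they do check out.
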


\begin{proof}

By \cite{Dynkin}, Theorem~$7.3$ and Theorem~$5.5$ (see also \cite{Morozov} and \cite{Karpelevich}), any non-semisimple subalgebra $\mathfrak{g}\subset G_2$ is conjugate to a subalgebra of $G_2(\alpha)=A_2$, $G_2(\beta)=A_1+\tilde{A_1}$, $G_2[\alpha]$ or $G_2[\beta]$.\\

Let $\mathfrak{g}\subset G_2[\alpha]$ or $\mathfrak{g}\subset G_2[\beta]$ be non-semisimple and non-solvable, $\mathfrak{s}\subset \mathfrak{g}$ its Levi subalgebra. Then
$$
3\leq dim(\mathfrak{s})\leq dim(Levi(G_2[\alpha]))=dim(Levi(G_2[\beta]))=3.
$$

Hence by the conjugacy of Levi subalgebras, we may assume that either
$$
\mathfrak{s}=A_1=Levi(G_2[\alpha])\subset \mathfrak{g}\subset G_2[\alpha]
$$
or
$$
\mathfrak{s}=\tilde{A_1}=Levi(G_2[\beta])\subset \mathfrak{g}\subset G_2[\beta].
$$

If $\mathfrak{g}\subset G_2(\alpha)$ or $\mathfrak{g}\subset G_2(\beta)$, the claim follows from Proposition~\ref{proposition:2} (or from \cite{RepkaA2}, \cite{RepkaD2}).

\end{proof}

\begin{lemma}\label{lemma:3}
Let $\mathfrak{g}\subset G_2$ be a non-semisimple non-solvable subalgebra. If a Levi subalgebra of $\mathfrak{g}$ is conjugate in $G_2$ to $\tilde{A_1}=A_1^{3}$, then $\mathfrak{g}\subset G_2$ is regular.
\end{lemma}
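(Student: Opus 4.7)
The plan is to exploit complete reducibility of $\tilde{A_1}$-representations to reduce the problem to a finite enumeration, and then to dispose of the only genuinely subtle family by an explicit inner automorphism.

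By Lemma~\ref{lemma:2} we may assume $\tilde{A_1} \subset \mathfrak{g} \subset G_2[\beta]$. Since $\tilde{A_1}$ is semisimple and stabilizes $\mathfrak{g}$ under the adjoint action (because $\tilde{A_1} \subset \mathfrak{g}$), the subalgebra $\mathfrak{g}$ splits as a direct sum of irreducible $\tilde{A_1}$-submodules of $G_2[\beta]$. Proposition~\ref{proposition:2} provides the $\tilde{A_1}$-isotypic decomposition
$$
G_2[\beta] \;=\; \tilde{A_1} \;\oplus\; \tilde{V_1} \;\oplus\; \mathbb{C}H_{3\alpha+2\beta} \;\oplus\; \mathbb{C}X_{3\alpha+2\beta},
$$
with $\tilde{A_1} \cong D_1$, $\tilde{V_1} \cong D_{3/2}$, and the last two summands each carrying the trivial representation; this uses $(\alpha, 3\alpha+2\beta) = 0$, so that $H_\alpha$ kills both $H_{3\alpha+2\beta}$ and $X_{3\alpha+2\beta}$, together with the fact that $\pm\alpha + (3\alpha+2\beta)$ is not a root. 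Consequently
$$
\mathfrak{g} \;=\; \tilde{A_1} \;\oplus\; M_1 \;\oplus\; M_0,
$$
with $M_1 \in \{0, \tilde{V_1}\}$ and $M_0$ an arbitrary subspace of the $2$-dimensional trivial isotypic component $\mathbb{C}H_{3\alpha+2\beta} \oplus \mathbb{C}X_{3\alpha+2\beta}$.

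Next I would enumerate the possibilities. If $M_1 = \tilde{V_1}$, the bracket $[X_\beta, X_{3\alpha+\beta}] = X_{3\alpha+2\beta}$ forces $\mathbb{C}X_{3\alpha+2\beta} \subset M_0$, leaving only $M_0 = \mathbb{C}X_{3\alpha+2\beta}$ (yielding the $8$-dimensional $\tilde{A_1} \oplus \bigoplus_{\gamma \in \Phi^{+}\setminus\{\alpha\}} \mathbb{C}X_\gamma$) or $M_0 = \mathbb{C}H_{3\alpha+2\beta} \oplus \mathbb{C}X_{3\alpha+2\beta}$ (yielding $G_2[\beta]$ itself). If $M_1 = 0$, then $\tilde{A_1} \oplus M_0$ is a subalgebra for any $M_0$ because the two trivial summands commute with $\tilde{A_1}$; the four obvious cases $M_0 \in \{0,\; \mathbb{C}H_{3\alpha+2\beta},\; \mathbb{C}X_{3\alpha+2\beta},\; \mathbb{C}H_{3\alpha+2\beta}\oplus \mathbb{C}X_{3\alpha+2\beta}\}$ appear literally in Table~\ref{table:1}.

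The remaining case, and the main obstacle, is a one-parameter family of diagonal lines $M_0 = \mathbb{C}(aH_{3\alpha+2\beta} + bX_{3\alpha+2\beta})$ with $ab \neq 0$, which is not visibly matched by any entry of Table~\ref{table:1}. To handle it I would apply the inner automorphism $s = \exp(t \cdot ad_{X_{3\alpha+2\beta}})$: since $[X_{3\alpha+2\beta}, \tilde{A_1}] = 0$, the automorphism $s$ fixes $\tilde{A_1}$ pointwise; a short computation gives $s(X_{3\alpha+2\beta}) = X_{3\alpha+2\beta}$ and $s(H_{3\alpha+2\beta}) = H_{3\alpha+2\beta} - 2t\, X_{3\alpha+2\beta}$, with no higher-order terms because $ad_{X_{3\alpha+2\beta}}^{2}(H_{3\alpha+2\beta}) = 0$. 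Choosing $t = b/(2a)$ normalizes $M_0$ to $\mathbb{C}H_{3\alpha+2\beta}$, so $\mathfrak{g}$ is conjugate to the regular subalgebra $\tilde{A_1} \oplus \mathbb{C}H_{3\alpha+2\beta}$ of Table~\ref{table:1}. This exhausts all cases and proves regularity.
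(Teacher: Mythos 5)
Your argument inside $G_2[\beta]$ is correct and is essentially the paper's: decompose $\mathfrak{g}$ into $\tilde{A_1}$-isotypic pieces via Proposition~\ref{proposition:2}, observe that the $D_{3/2}$-component is all-or-nothing, and normalize a diagonal line $\mathbb C(aH_{3\alpha+2\beta}+bX_{3\alpha+2\beta})$ by $\exp(t\cdot ad_{X_{3\alpha+2\beta}})$ (your observation that $[X_{\beta},X_{3\alpha+\beta}]=X_{3\alpha+2\beta}$ forces $X_{3\alpha+2\beta}\in\mathfrak{g}$ whenever $M_1=\tilde{V_1}$ is in fact a little cleaner than the paper's dimension count there).

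However, there is a genuine gap at the very first step: Lemma~\ref{lemma:2} does \emph{not} let you assume $\mathfrak{g}\subset G_2[\beta]$. What its proof gives is that $\mathfrak{g}$ is conjugate into one of the maximal subalgebras $G_2(\alpha)$, $G_2(\beta)$, $G_2[\alpha]$, $G_2[\beta]$; when the Levi subalgebra is $\tilde{A_1}$ the two possibilities that survive are $\mathfrak{g}\subset G_2[\beta]$ \emph{and} $\mathfrak{g}\subset G_2(\beta)=A_1+\tilde{A_1}$, and you have silently discarded the second. In that case the centralizer of $\tilde{A_1}$ inside $G_2(\beta)$ is the full long-root $\mathfrak{sl}_2$ spanned by $H_{3\alpha+2\beta}$, $X_{3\alpha+2\beta}$ and $X_{-(3\alpha+2\beta)}$, so $\mathfrak{g}=\tilde{A_1}\oplus V$ with $V$ a $1$- or $2$-dimensional subspace of $\tilde{V_3}\cong{\mathbb C}^3$ that may involve $X_{-(3\alpha+2\beta)}$; such subalgebras are not covered by your enumeration. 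The case is not hard to close --- $V$ must be a subalgebra of this $\mathfrak{sl}_2$, and every proper subalgebra of $\mathfrak{sl}_2$ can be moved into the Borel $\mathbb C H_{3\alpha+2\beta}\oplus\mathbb C X_{3\alpha+2\beta}$ by automorphisms $\exp(d\cdot ad_{X_{\pm(3\alpha+2\beta)}})$, which centralize $\tilde{A_1}$, after which your normalization applies --- but this is precisely the second half of the paper's proof, and your write-up needs it to be complete.
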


\begin{proof}

Suppose $\mathfrak{g}\subset G_2$ is not regular. By the remark in the proof of Lemma~\ref{lemma:2}, we may assume that either $\mathfrak{g}\subset G_2[\beta ]$ or $\mathfrak{g}\subset G_2(\beta )=A_1+\tilde{A_1}$. Note that $\tilde{A_1}\subset G_2$ is not conjugate to a subalgebra of $A_2=G_2(\alpha )$ (see \cite{Malcev} or \cite{RepkaA2}).\\

Let $\tilde{A_1} \subset \mathfrak{g}\subset G_2[\beta ]$. By Proposition~\ref{proposition:2},
$$
G_2[\beta ]=\tilde{A_1}\oplus \tilde{V_1}\oplus \left( \mathbb C H_{3\alpha +2\beta} \oplus \mathbb C X_{3\alpha +2\beta} \right),
$$
where $\tilde{V_1}\cong D_{3/2}$ and $\left( \mathbb C H_{3\alpha +2\beta} \oplus \mathbb C X_{3\alpha +2\beta} \right)\cong {\mathbb C}^2$ as representations of $\tilde{A_1}$.\\

If $rad(\mathfrak{g})$ contains a subrepresentation of $\tilde{A_1}$ isomorphic to $D_{3/2}$, then $\tilde{A_1}\oplus \tilde{V_1}\subset \mathfrak{g}$. Since $\mathfrak{g}\subset G_2$ is non-regular, $\mathfrak{g}=\tilde{A_1}\oplus \tilde{V_1}\oplus \mathbb C (a\cdot H_{3\alpha +2\beta}+b\cdot X_{3\alpha +2\beta})$. Applying $\exp(c\cdot ad_{X_{3\alpha +2\beta }})\in Aut(G_2)$, we see that $\mathfrak{g}\subset G_2$ is regular, a contradiction.\\

Hence $rad(\mathfrak{g})\cong \mathbb C$ or $rad(\mathfrak{g})\cong {\mathbb C}^2$ as representations of $\tilde{A_1}$. Then $\mathfrak{g}=\tilde{A_1}\oplus V$, where $V\subset \mathbb C H_{3\alpha +2\beta} \oplus \mathbb C X_{3\alpha +2\beta}$ is a vector subspace of dimension $1$. Then $\mathfrak{g}\subset G_2$ is regular by the same argument as above, a contradiction.\\

Let $\tilde{A_1} \subset \mathfrak{g}\subset G_2(\beta )=A_1+\tilde{A_1}$. By Proposition~\ref{proposition:2},
$$
G_2(\beta )=\tilde{A_1}\oplus \tilde{V_3},
$$
where $\tilde{V_3}=\mathbb C H_{3\alpha +2\beta }\oplus \mathbb C X_{3\alpha +2\beta }\oplus \mathbb C X_{-(3\alpha +2\beta )}  \cong {\mathbb C}^3$ as representations of $\tilde{A_1}$. Hence $\mathfrak{g}=\tilde{A_1}\oplus V$, where $V\subset \tilde{V_3}$ is a vector subspace of dimension $1$ or $2$.\\

Let $x=a\cdot H_{3\alpha +2\beta} +b\cdot X_{3\alpha +2\beta}+c\cdot X_{-(3\alpha +2\beta )}\in V$, $x\neq 0$. Conjugating by $\exp(d\cdot ad_{X_{3\alpha +2\beta }})$, we can ensure that either $b=0$ or $a=c=0$. If $b=0$, then $x=a\cdot H_{3\alpha +2\beta} +c\cdot X_{-(3\alpha +2\beta )}\in V$, $x\neq 0$. Conjugating by $\exp(d\cdot ad_{X_{-(3\alpha +2\beta )}})$, we can ensure that $a\cdot c=0$. If $c=0$, then $\mathfrak{h}\subset \mathfrak{g}$, and so $\mathfrak{g}\subset G_2$ would be regular. Hence $a=0$, i.e. $X_{-(3\alpha +2\beta )}\in V$.\\

So, we may assume that either $X_{3\alpha +2\beta}\in V$ or $X_{-(3\alpha +2\beta )}\in V$. Then $dim(V)=2$ and we can find $z=a'\cdot H_{3\alpha +2\beta} +b'\cdot X_{\mp (3\alpha +2\beta )}\in V$, $z\neq 0$. Since $[X_{\pm (3\alpha +2\beta )},z]\in V$, $H_{3\alpha +2\beta}\in V$. Hence $\mathfrak{g}\subset G_2$ is regular, a contradiction.\\

The second half of the proof also follows immediately from \cite{RepkaA2}.

\end{proof}

\begin{lemma}\label{lemma:4}
Let $\mathfrak{g}\subset G_2$ be a non-semisimple non-solvable subalgebra. If a Levi subalgebra of $\mathfrak{g}$ is conjugate in $G_2$ to $A_1=A_1^{1}$, then $\mathfrak{g}\subset G_2$ is regular.
\end{lemma}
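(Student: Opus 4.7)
The plan is to follow the strategy of Lemma~\ref{lemma:3}. Combining Dynkin's theorem with Mal'cev conjugacy of Levi subalgebras (as invoked at the start of Lemma~\ref{lemma:2}), I may assume that the Levi of $\mathfrak{g}$ equals the standard long-root subalgebra $A_1=\mathbb{C} H_{\beta}\oplus \mathbb{C} X_{\beta}\oplus \mathbb{C} X_{-\beta}$, and that $\mathfrak{g}$ is contained in one of $G_2(\alpha)=A_2$, $G_2(\beta)=A_1+\tilde{A_1}$, $G_2[\alpha]$, or $G_2[\beta]$. The case $\mathfrak{g}\subset G_2[\beta]$ is vacuous: the Levi of $G_2[\beta]$ is $\tilde{A_1}$, which is not conjugate in $G_2$ to $A_1$ and contains no three-dimensional simple subalgebra other than itself. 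The case $\mathfrak{g}\subset G_2(\beta)$ is easy: after conjugating the long-root $A_1$-factor of $G_2(\beta)=A_1+\tilde{A_1}$ to the standard $A_1$ (via a Weyl reflection through $\alpha+\beta$), $\mathfrak{g}=A_1\oplus V$ with $V$ a proper Lie subalgebra of $\tilde{A_1}$; inner automorphisms of $\tilde{A_1}$ reduce $V$ to one of $0$, $\mathbb{C} H_{\alpha}$, $\mathbb{C} X_{\alpha}$, $\mathbb{C} H_{\alpha}\oplus \mathbb{C} X_{\alpha}$, and each gives a regular subalgebra listed in Table~\ref{table:1}, a contradiction.

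For $\mathfrak{g}\subset G_2(\alpha)=A_2$, I would restrict Proposition~\ref{proposition:2} to get the $A_1$-decomposition
$$
A_2 = A_1 \oplus \mathbb{C} H_{2\alpha+\beta} \oplus V_3 \oplus V_4,
$$
with $V_3=\mathbb{C} X_{3\alpha+\beta}\oplus \mathbb{C} X_{3\alpha+2\beta}\cong D_{1/2}$ and $V_4=\mathbb{C} X_{-(3\alpha+\beta)}\oplus \mathbb{C} X_{-(3\alpha+2\beta)}\cong D_{1/2}$, then decompose the radical $R$ of $\mathfrak{g}$ as $R=R_0\oplus R_{1/2}$ under $A_1$, with $R_0\subset \mathbb{C} H_{2\alpha+\beta}$ and $R_{1/2}\subset V_3\oplus V_4$ a sum of copies of $D_{1/2}$. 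The key claim is that $R_{1/2}$ cannot contain any ``diagonal'' $D_{1/2}$ not lying in $V_3$ or $V_4$: the bracket of its weight-$\pm 1$ vectors would produce a nonzero multiple of $H_{3\alpha+\beta}+H_{3\alpha+2\beta}\in \mathbb{C} H_{2\alpha+\beta}$, placing this element in $R_0$; since $H_{2\alpha+\beta}$ acts on the weight-$+1$ vectors of $V_3$ and $V_4$ with opposite signs, a further bracket would separate the diagonal into pure $V_3$- and $V_4$-components, forcing $R\supset V_3\oplus V_4$ and therefore $\mathfrak{g}=A_2$, contradicting non-semisimplicity. The same Cartan bracket rules out $R_{1/2}\supset V_3\oplus V_4$ directly. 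Hence $R_{1/2}\in\{0,V_3,V_4\}$, and every resulting $\mathfrak{g}=A_1\oplus R$ matches an entry of Table~\ref{table:1}, a contradiction.

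For $\mathfrak{g}\subset G_2[\alpha]$, conjugating the Levi of $\mathfrak{g}$ to the Levi of $G_2[\alpha]$ I would use the decomposition
$$
G_2[\alpha] = A_1 \oplus \mathbb{C} H_{2\alpha+\beta} \oplus \mathbb{C} X_{2\alpha+\beta} \oplus V_1 \oplus V_3,
$$
with $V_1=\mathbb{C} X_{\alpha}\oplus \mathbb{C} X_{\alpha+\beta}\cong D_{1/2}$ and $V_3=\mathbb{C} X_{3\alpha+\beta}\oplus \mathbb{C} X_{3\alpha+2\beta}\cong D_{1/2}$. If $R_{1/2}$ meets $V_1$ nontrivially (as $V_1$ itself or as a diagonal $D_{1/2}$ mixing $V_1$ and $V_3$), the brackets $[X_{\alpha},X_{\alpha+\beta}]=-2X_{2\alpha+\beta}$ and $[X_{2\alpha+\beta},X_{\alpha}]=-3X_{3\alpha+\beta}$ force $X_{2\alpha+\beta}\in R_0$ and $X_{3\alpha+\beta}\in R$, and the $A_1$-action then forces $V_3\subset R_{1/2}$ and $V_1\subset R_{1/2}$. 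So $R_{1/2}\in\{0,V_3,V_1\oplus V_3\}$, with $X_{2\alpha+\beta}\in R_0$ in the last case. The weight-$0$ part $R_0\subset \mathbb{C} H_{2\alpha+\beta}\oplus \mathbb{C} X_{2\alpha+\beta}$ can then be normalized using $\exp(c\cdot ad_{X_{\pm(2\alpha+\beta)}})$, which centralize $A_1$. Every resulting $\mathfrak{g}$ appears in Table~\ref{table:1}, contradicting non-regularity.

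The hard part will be this last case: the centralizing automorphisms $\exp(c\cdot ad_{X_{\pm(2\alpha+\beta)}})$ shear $V_1$ toward $V_3$ and simultaneously shift elements between $\mathbb{C} H_{2\alpha+\beta}$ and $\mathbb{C} X_{2\alpha+\beta}$, so the normalizations of $R_{1/2}$ and $R_0$ must be coordinated, and each resulting normal form must be matched explicitly to an entry of Table~\ref{table:1}.
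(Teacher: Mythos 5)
Your proposal is correct and follows essentially the same route as the paper: reduce via Dynkin--Mal'cev to $\mathfrak{g}$ sitting inside one of the maximal non-semisimple subalgebras, then use the $A_1$-module decompositions of Proposition~\ref{proposition:2} to pin down the radical isotypic component by component. The only real deviations are cosmetic --- for instance, in the $G_2[\alpha]$ case you rule out a diagonal $D_{1/2}$ via the bracket $[X_{\alpha},X_{\alpha+\beta}]=-2X_{2\alpha+\beta}$ where the paper straightens it with $\exp(c\cdot ad_{X_{2\alpha+\beta}})$, and the ``hard part'' you anticipate dissolves because $X_{2\alpha+\beta}\in\operatorname{rad}(\mathfrak{g})$ is then automatic, so only $\exp(c\cdot ad_{X_{2\alpha+\beta}})$ (which fixes $A_1$ and $V_3$) is ever needed to normalize the weight-zero part.
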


\begin{proof}

Suppose $\mathfrak{g}\subset G_2$ is not regular. By the remark in the proof of Lemma~\ref{lemma:2}, we may assume that either $\mathfrak{g}\subset G_2[\alpha ]$ or $\mathfrak{g}\subset G_2(\beta )=A_1+\tilde{A_1}$ or $\mathfrak{g}\subset G_2(\alpha )=A_2$.\\

In the latter two cases, the claim follows from \cite{RepkaD2} and \cite{RepkaA2}. However, we include these cases for completeness.\\

Let $A_1 \subset \mathfrak{g}\subset G_2[\alpha ]$. By Proposition~\ref{proposition:2},
$$
G_2[\alpha ]=A_1 \oplus V_1\oplus V_3\oplus \left( \mathbb C H_{2\alpha +\beta} \oplus \mathbb C X_{2\alpha +\beta} \right),
$$
where $V_1\cong V_3\cong D_{1/2}$ and $\left( \mathbb C H_{2\alpha +\beta} \oplus \mathbb C X_{2\alpha +\beta} \right)\cong {\mathbb C}^2$ as representations of $A_1$.\\

Then $rad(\mathfrak{g})\cong D_{1/2}^{\oplus p}\oplus {\mathbb C}^q$, where $p,q\in \{\ 0,1,2 \}$.\\

If $p=2$, then $V_1\oplus V_3\subset rad(\mathfrak{g})$, i.e. $\mathfrak{g}=A_1\oplus V_1\oplus V_3\oplus \mathbb C (a\cdot H_{2\alpha +\beta }+b\cdot X_{2\alpha +\beta })$. Conjugating by $\exp(c\cdot ad_{X_{2\alpha +\beta }})$, we can ensure that $a\cdot b=0$. Hence $\mathfrak{g}\subset G_2$ is regular, a contradiction.\\

If $p=0$, then $\mathfrak{g}=A_1\oplus V$, where $V\subset \mathbb C H_{2\alpha +\beta }\oplus \mathbb C X_{2\alpha +\beta }$. Since $\mathfrak{g}\subset G_2$ is not regular, $dim(V)=1$, i.e. $V=\mathbb C \left( a\cdot H_{2\alpha +\beta }+ b\cdot X_{2\alpha +\beta } \right)$, where $a\cdot b\neq 0$. Applying $\exp(c\cdot ad_{X_{2\alpha +\beta }})$ as above, we conclude that $\mathfrak{g}\subset G_2$ is regular, a contradiction.\\

Hence $p=1$, i.e. $rad(\mathfrak{g})\cong D_{1/2}\oplus {\mathbb C}^q$, $q\in \{ 0,1,2 \}$. Hence we may assume that either $\mathfrak{g}=A_1\oplus V$ or $\mathfrak{g}=A_1\oplus V\oplus \mathbb C X_{2\alpha +\beta }$, where $V\subset V_1\oplus V_3$ is a subrepresentation of $A_1$ isomorphic to $D_{1/2}$.\\

Let $x=a\cdot X_{\alpha}+b\cdot X_{3\alpha +\beta}+a'\cdot X_{\alpha+\beta}+b'\cdot X_{3\alpha +2\beta}\in V$, $x\neq 0$. Taking the bracket $[x,X_{-\beta}]$, we can find a non-zero element $x'=a''\cdot X_{\alpha } + b''\cdot X_{3\alpha +\beta}\in V$. Applying $\exp(c\cdot ad_{X_{2\alpha +\beta}})\in Aut(G_2)$, we can ensure that either $X_{\alpha} \in V$ or $X_{3\alpha +\beta} \in V$. Acting by $A_1\subset \mathfrak{g}$, we conclude that either $V=V_1$ or $V=V_3$ in this case. Hence $\mathfrak{g}\subset G_2$ is regular, a contradiction.\\

Let $A_1 \subset \mathfrak{g}\subset G_2(\alpha )=A_2$. By Proposition~\ref{proposition:2},
$$
G_2(\alpha )=A_1 \oplus V_3\oplus V_4\oplus \mathbb C H_{2\alpha +\beta},
$$
where $V_3\cong V_4\cong D_{1/2}$ and $\mathbb C H_{2\alpha +\beta}\cong {\mathbb C}$ as representations of $A_1$.\\

Note that $H_{2\alpha +\beta}\notin \mathfrak{g}$. Otherwise, $\mathfrak{h}\subset \mathfrak{g}$, and so $\mathfrak{g}\subset G_2$ would be regular.\\

Hence $rad(\mathfrak{g})\cong D_{1/2}^{\oplus p}$, where $p\in \{\ 0,1,2 \}$. Since $\mathfrak{g}\subset G_2$ is not regular, $p=1$, i.e. $\mathfrak{g}=A_1\oplus V$, where $V\subset V_3\oplus V_4$ is a subrepresentation of $A_1$ isomorphic to $D_{1/2}$.\\

Let $x=a\cdot X_{3\alpha +\beta}+b\cdot X_{-(3\alpha +2\beta )}+a'\cdot X_{-(3\alpha+\beta )}+b'\cdot X_{3\alpha +2\beta}\in V$, $x\neq 0$. Acting by $ad_{X_{-\beta}}\in End(V)$, we may assume that $a'=b'=0$, i.e. $x=a\cdot X_{3\alpha +\beta} + b\cdot X_{-(3\alpha +2\beta )}\in V$, $x\neq 0$.\\

Then $[X_{\beta},x]=a\cdot X_{3\alpha +2\beta}-b\cdot X_{-(3\alpha +\beta)}\in\mathfrak{g}$ and hence $[[X_{\beta}, x],x]=ab\cdot H_{3\alpha +2\beta}+ab\cdot H_{3\alpha +\beta}$ should be proportional to $H_{\beta }$. Hence $a\cdot b=0$. Then $V=V_3$ or $V=V_4$, i.e. $\mathfrak{g}\subset G_2$ is regular, a contradiction.\\

Let $A_1 \subset \mathfrak{g}\subset G_2(\beta )=A_1+\tilde{A_1}$. By Proposition~\ref{proposition:2},
$$
G_2(\beta )=A_1 \oplus V_5,
$$
where $V_5=\mathbb C H_{2\alpha +\beta }\oplus \mathbb C X_{2\alpha +\beta }\oplus \mathbb C X_{-(2\alpha +\beta )}\cong {\mathbb C}^3$ as representations of $A_1$.\\

Hence $\mathfrak{g}=A_1\oplus V$, where $V\subset V_5$ is a vector subspace of dimension $1$ or $2$.\\

Let $x=a\cdot H_{2\alpha +\beta }+b\cdot X_{2\alpha +\beta }+c\cdot  X_{-(2\alpha +\beta )}\in V$, $x\neq 0$. Conjugating by $\exp(d\cdot ad_{X_{2\alpha +\beta}})\in Aut(G_2)$, we can ensure that either $b=0$ or $a=c=0$. If $b=0$, then $x=a\cdot H_{2\alpha +\beta }+c\cdot  X_{-(2\alpha +\beta )}\in V$, $x\neq 0$.\\

Conjugation by $\exp(d\cdot ad_{X_{-(2\alpha +\beta )}})\in Aut(G_2)$ would imply that $H_{2\alpha +\beta} \in \mathfrak{g}$, i.e. $\mathfrak{g}\subset G_2$ is regular, unless $a=0$. Hence $X_{-(2\alpha+\beta)}\in V$.\\

So, we may assume that either $X_{2\alpha +\beta}\in V$ or $X_{-(2\alpha +\beta )}\in V$. Since $\mathfrak{g}\subset G_2$ is not regular, $dim(V)=2$ and we can find $z=a'\cdot H_{2\alpha +\beta} +b'\cdot X_{\mp (2\alpha +\beta )}\in V$, $z\neq 0$. Since $[X_{\pm (2\alpha +\beta )},z]\in V$, $H_{2\alpha +\beta}\in V$. Hence $\mathfrak{g}\subset G_2$ is regular, a contradiction.

\end{proof}

\section{Solvable subalgebras}

Let $\mathfrak{b}=\mathfrak{h}\oplus \mathfrak{n}\subset G_2$ be the fixed Borel subalgebra, $\mathfrak{n}=\bigoplus\limits_{\gamma \in {\Phi}^{+}}\mathbb C X_{\gamma}\subset G_2$.\\

We start with a few Lemmata.\\

\begin{lemma}\label{lemma:5}
Let $\mathfrak{g}\subset G_2$ be a solvable non-regular subalgebra of $G_2$. Then it is conjugate to a subalgebra $\mathfrak{g}\subset \mathfrak{b}$ such that
\begin{itemize}
\item either $\mathfrak{g}\subset \mathfrak{n}$
\item or $\mathfrak{g}/{\mathfrak{g} \cap \mathfrak{n}} \subset \mathfrak{b}/\mathfrak{n}$ is generated by an element of the form $f=x+\lambda \cdot X_{\gamma}\in \mathfrak{g}$, where $x\in \mathfrak{h}$, $\gamma \in {\Phi}^{+}$, $\gamma (x)=0$, $\lambda \in {\mathbb C}^{*}$,
\item or $\mathfrak{g}/{\mathfrak{g} \cap \mathfrak{n}} \subset \mathfrak{b}/\mathfrak{n}$ is generated by an element $f\in \mathfrak{h}\cap \mathfrak{g}$.
\end{itemize}
\end{lemma}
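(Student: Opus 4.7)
My plan is to reduce to the Borel $\mathfrak{b}$ and then analyze the image of $\mathfrak{g}$ under the projection $\pi\colon\mathfrak{b}\twoheadrightarrow\mathfrak{b}/\mathfrak{n}\cong\mathfrak{h}$. Since $\mathfrak{g}$ is solvable, Lie's theorem together with the inner-automorphism conjugacy of Borel subalgebras of $G_2$ lets me assume $\mathfrak{g}\subset\mathfrak{b}$. Set $V=\pi(\mathfrak{g})\subset\mathfrak{h}$. If $V=0$, then $\mathfrak{g}\subset\mathfrak{n}$ and the first bullet holds; so I may henceforth assume $V\neq 0$ and pick some $f\in\mathfrak{g}$ with $\pi(f)=x\neq 0$, writing $f=x+n$ with $n\in\mathfrak{n}$.

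The core technical step is the following \emph{centralizer normal form}: for any $x\in\mathfrak{h}$ and $n\in\mathfrak{n}$, there exists $u\in\mathfrak{n}$ with $\exp(\operatorname{ad}_u)(x+n)=x+n'$, where $n'\in C_{\mathfrak{n}}(x)=\bigoplus_{\mu\in\Phi^+,\,\mu(x)=0}\mathbb{C}X_\mu$. To prove this I filter $\mathfrak{n}$ by root height, $\mathfrak{n}=\bigoplus_{k=1}^{5}\mathfrak{n}_k$, and proceed by induction on $k$: the operator $\operatorname{ad}_x$ preserves each $\mathfrak{n}_k$, its kernel there is $C_{\mathfrak{n}}(x)\cap\mathfrak{n}_k$, and its image is a complement thereof, so a suitable choice of $u\in\mathfrak{n}_k$ cancels the non-centralizer part of the height-$k$ component of $n$; the higher-order terms of $\exp(\operatorname{ad}_u)$ perturb only heights $\geq k+1$, so the induction closes after five rounds. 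The resulting $\exp(\operatorname{ad}_u)$ is an inner automorphism of $G_2$ preserving $\mathfrak{b}$, so the conjugated $\mathfrak{g}$ still lies in $\mathfrak{b}$ and contains the element $\exp(\operatorname{ad}_u)(f)=x+n'$, which projects to $x$ and hence still generates $\mathfrak{g}/(\mathfrak{g}\cap\mathfrak{n})$.

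I now split on the structure of $x$. If $x$ is a regular element of $\mathfrak{h}$ — meaning $\mu(x)\neq 0$ for every $\mu\in\Phi$ — then $C_{\mathfrak{n}}(x)=0$, so the normal form gives $f=x\in\mathfrak{g}$; Theorem~\ref{lemma:1} then forces $\mathfrak{g}$ to be regular, contradicting the hypothesis. The same observation rules out $\dim V=2$: there $V=\mathfrak{h}$ contains regular elements (the complement of finitely many root hyperplanes), and choosing $f$ to project onto such an $x$ would again make $\mathfrak{g}$ regular.

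Hence $\dim V=1$ and $x$ is a nonzero non-regular element of $\mathfrak{h}$. Because any two linearly independent roots of $G_2$ span $\mathfrak{h}^*$, at most one pair $\pm\gamma$ of roots can vanish on $x$, and non-regularity pins down exactly one such $\gamma\in\Phi^+$, giving $C_{\mathfrak{n}}(x)=\mathbb{C}X_\gamma$. The normal form then takes the shape $f=x+\lambda X_\gamma$ with $\gamma(x)=0$: $\lambda\in\mathbb{C}^*$ yields the second bullet, while $\lambda=0$ places $f\in\mathfrak{h}\cap\mathfrak{g}$, yielding the third. The main obstacle is the centralizer normal form lemma itself; however, the very short height filtration available in $G_2$ (heights $1$ through $5$) keeps the inductive cancellation entirely concrete.
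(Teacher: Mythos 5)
Your reduction to $\mathfrak{b}$ and your centralizer normal form $\exp(\operatorname{ad}_u)(x+n)=x+n'$ with $n'\in C_{\mathfrak{n}}(x)$ are both correct, and the latter is a clean, more computational substitute for the paper's route (Jordan decomposition of $f$ followed by conjugacy of Cartan subalgebras of $\mathfrak{b}$). The genuine problem is your use of Theorem~\ref{lemma:1}. The paper's (Dynkin's) notion of a regular element is that $\operatorname{ad}_x$ has the maximal possible number of distinct eigenvalues, which for $x\in\mathfrak{h}$ means that the twelve values $\mu(x)$, $\mu\in\Phi$, are nonzero \emph{and pairwise distinct}. You replace this by the strictly weaker condition ``$\mu(x)\neq 0$ for every $\mu\in\Phi$'' and then invoke Theorem~\ref{lemma:1} to derive a contradiction. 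That step fails: take $x=H_{9\alpha+5\beta}$. No root vanishes on it, yet $\alpha(x)=\beta(x)$, so $\operatorname{ad}_x$ has only $11<13$ distinct eigenvalues and $x$ is not regular in Dynkin's sense; correspondingly, $\mathbb{C}H_{9\alpha+5\beta}\oplus\mathbb{C}(X_\alpha+X_\beta)$ (Table~\ref{table:3}, No.~14) is a solvable \emph{non-regular} subalgebra containing such an $x$, and your argument applied to it would ``prove'' that it is regular. The same conflation appears in your elimination of $\dim V=2$, though there it is harmless, since Dynkin-regular elements of $\mathfrak{h}$ still form the complement of finitely many hyperplanes (the root hyperplanes together with the loci $\mu(x)=\nu(x)$), so you may simply choose $x$ there to be Dynkin-regular.

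The case you wrongly exclude does not actually threaten the lemma: if no positive root vanishes on $x$, then $C_{\mathfrak{n}}(x)=0$, your normal form gives $f=x\in\mathfrak{h}\cap\mathfrak{g}$, and you land in the \emph{third} bullet rather than in a contradiction. So the repair is to delete the false contradiction, route the subcase ``$\mu(x)\neq0$ for all $\mu$ but $x$ not Dynkin-regular'' to the third bullet, and reserve Theorem~\ref{lemma:1} for elements that are genuinely regular in Dynkin's sense (which is what disposes of $\dim V=2$, and only that). As written, however, the proof asserts a false intermediate statement and its case analysis omits subalgebras that do occur.
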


\begin{proof}

If $\mathfrak{g}/{\mathfrak{g} \cap \mathfrak{n}} = \mathfrak{b}/\mathfrak{n}$, then $\mathfrak{g}$ contains an element $x+n$, where $x\in \mathfrak{h}$, $n\subset \mathfrak{n}$ and $\gamma (x)$, $\gamma \in \Phi$, are all distinct and non-zero. By Theorem~\ref{lemma:1}, $\mathfrak{g}\subset G_2$ is regular, a contradiction.\\

Suppose that $\mathfrak{g}/{\mathfrak{g} \cap \mathfrak{n}} \subset \mathfrak{b}/\mathfrak{n}$ is non-zero and generated by $f=x+n\in \mathfrak{g}$, where $x\in \mathfrak{h}$, $n\in \mathfrak{n}$. Let $f=(x+n_s)+n_n$ be the Jordan decomposition: $x+n_s\in G_2$ semisimple, $n_n\in G_2$ nilpotent and $[x+n_s,n_n]=0$. Note that $n_n, n_s\in \mathfrak{n}$, because $ad_{n_n}$ is a polynomial in $ad_{f}$, and hence $n_n$ normalizes $\mathfrak{b}$.\\

Note that $dim(ker(ad_{x+n_s}))$ is $2$ or $4$. If it is $2$, then the characteristic polynomial of $ad_{x+n_s}$ on $\mathfrak{b}$ has the lowest possible multiplicity at $0$. Hence $x+n_s$ lies in a Cartan subalgebra of $\mathfrak{b}$.\\

If $dim(ker(ad_{x+n_s}))=4$, then $dim(ker(ad_{x+n_s} {\mid}_{\mathfrak{b}}))\geq 3$. Hence there is a \textit{regular} element (see the definition before Theorem~\ref{lemma:1}) $f'\in \mathfrak{b}$ such that $[x+n_s , f']=0$. Then the abelian subalgebra of $\mathfrak{b}$ spanned by $x+n_s$ and $f'$ consists of semisimple elements. Hence it is a Cartan subalgebra of $G_2$.\\

By the conjugacy of Cartan subalgebras, we may assume that $x+n_s \in \mathfrak{h}$, i.e. $n_s=0$, $n=n_n$. Since $[x+n_s,n_n]=0$, $n=\lambda \cdot X_{\gamma}$, where $\lambda \in \mathbb C$, $\gamma \in {\Phi}^{+}$, $\gamma (x)=0$, if such $\gamma$ exists. If there is no $\gamma \in {\Phi}^{+}$ such that $\gamma (x)=0$, then $n=0$, i.e. $f=x\in \mathfrak{h}$.

\end{proof}

In Lemma~\ref{lemma:6} and Table~\ref{table:10} below we do not identify conjugate subalgebras. In the notation $X_{\gamma} + \cdots$, the dots denote a linear combination of monomials $X_{\gamma '}$ with ${\gamma}' \succ {\gamma}$ (also, if $\gamma = \alpha$, then ${\gamma}'\neq \beta$). Their coefficients in Table~\ref{table:10} are arbitrary such that the Lie algebra axioms are fulfilled.\\

\begin{lemma}\label{lemma:6}
The complete list of subalgebras of $\mathfrak{n}$ is given in Table~\ref{table:10}.
\end{lemma}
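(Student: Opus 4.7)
The plan is to classify subalgebras of the $6$-dimensional nilpotent Lie algebra $\mathfrak{n} = \bigoplus_{\gamma \in \Phi^{+}} \mathbb{C} X_{\gamma}$ by direct enumeration, using the Chevalley basis together with the total ordering $\alpha \prec \beta \prec \alpha+\beta \prec 2\alpha+\beta \prec 3\alpha+\beta \prec 3\alpha+2\beta$ on $\Phi^{+}$. The first step is to put any subspace $\mathfrak{g} \subset \mathfrak{n}$ of dimension $d$ into its unique echelon basis $f_{1},\dots,f_{d}$ with respect to this ordering, so that $f_{i} = X_{\gamma_{i}} + \sum_{\gamma' \succ \gamma_{i},\, \gamma' \notin S} c_{i,\gamma'} X_{\gamma'}$ with $S := \{\gamma_{1},\dots,\gamma_{d}\} \subset \Phi^{+}$. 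The notational convention in the statement of Table~\ref{table:10} excluding $X_{\beta}$ from an $X_{\alpha}$-leading tail is exactly the corresponding echelon normalization.

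The key observation is that, by the multiplication table of Section~$2$, the structure constant $N_{\mu,\nu}$ is nonzero whenever $\mu,\nu,\mu+\nu \in \Phi$. Hence for $\gamma_{i},\gamma_{j} \in S$ with $\gamma_{i}+\gamma_{j} \in \Phi^{+}$ one has $[f_{i},f_{j}] = N_{\gamma_{i},\gamma_{j}} X_{\gamma_{i}+\gamma_{j}} + (\text{strictly higher terms})$, and closure of $\mathfrak{g}$ forces $\gamma_{i}+\gamma_{j} \in S$. Thus $S$ is necessarily a closed subset of $\Phi^{+}$, and the only sum relations in $\Phi^{+}$ to enforce are the five pairs $\alpha+\beta$, $\alpha+(\alpha+\beta)$, $\alpha+(2\alpha+\beta)$, $\beta+(3\alpha+\beta)$, and $(\alpha+\beta)+(2\alpha+\beta)$. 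I will then enumerate the closed subsets $S \subset \Phi^{+}$ (a finite list, grouped by cardinality), and for each compute the residual bracket equations on the tail parameters $c_{i,\gamma'}$ coming from the requirement that $[f_{i},f_{j}] - N_{\gamma_{i},\gamma_{j}} f_{\gamma_{i}+\gamma_{j}}$ lie in the span of the remaining $f_{k}$.

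Exploiting the height filtration $\mathfrak{n} \supset \mathfrak{n}_{2} \supset \mathfrak{n}_{3} \supset \mathfrak{n}_{4} \supset \mathfrak{n}_{5}$ with $\mathfrak{n}_{k} := \bigoplus_{\mathrm{ht}(\gamma) \geq k} \mathbb{C} X_{\gamma}$, together with the fact that $\mathbb{C} X_{3\alpha+2\beta}$ is the center of $\mathfrak{n}$ and each $\mathfrak{n}_{k}$ is an ideal, these equations can be solved one height layer at a time from the top down. Each constraint is linear (or in a few places bilinear) in the tail coefficients, and produces either a free parameter -- to be recorded as an arbitrary coefficient in Table~\ref{table:10} -- or a vanishing condition. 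Matching the resulting family against the corresponding row of the table then establishes the classification for that $S$.

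The main obstacle is the sheer length of the case analysis rather than any conceptual difficulty. The most delicate cases occur when $S$ contains two distinct pairs whose sums coincide -- most notably when $S$ contains both $\{\alpha+\beta,2\alpha+\beta\}$ and $\{\beta,3\alpha+\beta\}$, since both pairs produce $X_{3\alpha+2\beta}$ under bracket and the two resulting tail equations must be simultaneously compatible. In each such case the system collapses to explicit linear relations matching precisely the parametrizations exhibited in Table~\ref{table:10}, completing the enumeration.
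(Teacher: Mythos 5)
Your proposal is correct and is essentially the paper's own argument in slightly more formal dress: the paper's proof is exactly a case analysis along the lower central series $[\mathfrak{n},\mathfrak{n}]\supset[\mathfrak{n},[\mathfrak{n},\mathfrak{n}]]\supset\cdots$, which coincides with your height filtration $\mathfrak{n}_k$, and its enumeration of the possible leading terms in each successive quotient is your ``closed set of leading roots'' observation in different notation (with the tails left as parameters subject to closure, just as in your residual equations). So this is the same approach, merely organized via an echelon basis rather than via successive quotients.
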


\begin{proof}

Let $\mathfrak{g}\subset \mathfrak{n}$ be a subalgebra.\\

If $\mathfrak{g}/\mathfrak{g}\cap [\mathfrak{n},\mathfrak{n}]= \mathfrak{n}/[\mathfrak{n},\mathfrak{n}]$, then $\mathfrak{g}$ contains $x=X_{\beta}+\cdots$ and $y=X_{\alpha}+\cdots$. Hence also $[x,y]=X_{\alpha+\beta}+\cdots$, $z=[[x,y],y]=2X_{2\alpha+\beta}+\cdots$, $[y,z]=6X_{3\alpha+\beta}+\cdots$ and $[x,[y,z]]=6X_{3\alpha+2\beta}$. In other words, $\mathfrak{g}=\mathfrak{n}$ in this case.\\

Alternatively, either $\mathfrak{g}\subset [\mathfrak{n},\mathfrak{n}]$ or $\mathfrak{g}/\mathfrak{g}\cap [\mathfrak{n},\mathfrak{n}]\subset \mathfrak{n}/[\mathfrak{n},\mathfrak{n}]$ is generated by $a_1\cdot X_{\alpha } + b_1\cdot X_{\beta}+\cdots \in \mathfrak{g}$ for some $a_1,b_1\in \mathbb C$.\\

Let $\mathfrak{g_1}\subset [\mathfrak{n},\mathfrak{n}]$ be a subalgebra. Then either $\mathfrak{g_1}$ contains an element $X_{\alpha+\beta}+\cdots$ or $\mathfrak{g_1}\subset [\mathfrak{n},[\mathfrak{n},\mathfrak{n}]]$.\\

Let $\mathfrak{g_2}\subset [\mathfrak{n},[\mathfrak{n},\mathfrak{n}]]$ be a subalgebra. Then either $\mathfrak{g_2}$ contains an element $X_{2\alpha+\beta}+\cdots$ or $\mathfrak{g_2}\subset [\mathfrak{n},[\mathfrak{n},[\mathfrak{n},\mathfrak{n}]]]$.\\

Let $\mathfrak{g_3}\subset [\mathfrak{n},[\mathfrak{n},[\mathfrak{n},\mathfrak{n}]]]$ be a subalgebra. Then either $\mathfrak{g_3}=0$ or $\mathfrak{g_3}$ is spanned by an element $a_4\cdot X_{3\alpha +\beta} + b_4\cdot X_{3\alpha +2\beta}$, $a_4,b_4\in \mathbb C$, or $\mathfrak{g_3}= [\mathfrak{n},[\mathfrak{n},[\mathfrak{n},\mathfrak{n}]]]$.\\

Combining all cases together, we obtain Table~\ref{table:10}. 

\end{proof}

\begin{center}
\begin{longtable}{|c|c|}
\caption{Subalgebras of $\mathfrak{n}$}\label{table:10}\\
\hline
No. & Subalgebra $\mathfrak{g}\subset \mathfrak{n}\subset G_2$
\\ \hline\hline
	$1$ & $0$
  \\ \hline
	$2$ & $[\mathfrak{n},[\mathfrak{n},[\mathfrak{n},\mathfrak{n}]]]$
	\\ \hline
	$3$ & $[\mathfrak{n},[\mathfrak{n},\mathfrak{n}]]$
  \\ \hline
	$4$ & $[\mathfrak{n},\mathfrak{n}]$
  \\ \hline	
	$5$ & $\mathfrak{n}$
  \\ \hline
	$6$ & $\mathbb C (a_4\cdot X_{3\alpha +\beta} + b_4\cdot X_{3\alpha +2\beta})$
  \\ \hline
	$7$ & $\mathbb C (X_{2\alpha +\beta} + \cdots)$
	\\ \hline
	$8$ & $\mathbb C (X_{\alpha +\beta} + \cdots)$
	\\ \hline
	$9$ & $\mathbb C (a_1\cdot X_{\alpha } + b_1\cdot X_{\beta}+\cdots )$
  \\ \hline
	$10$ & $\mathbb C (X_{2\alpha +\beta} + \cdots) \oplus \mathbb C (a_4\cdot X_{3\alpha +\beta} + b_4\cdot X_{3\alpha +2\beta})$
	\\ \hline
	$11$ & $\mathbb C (X_{\alpha +\beta} + \cdots)\oplus [\mathfrak{n},[\mathfrak{n},[\mathfrak{n},\mathfrak{n}]]]$
	\\ \hline		
	$12$ & $\mathbb C (X_{\alpha +\beta} + \cdots)\oplus \mathbb C (a_4\cdot X_{3\alpha +\beta} + b_4\cdot X_{3\alpha +2\beta})$
	\\ \hline
	$13$ & $\mathbb C (X_{\alpha } + \cdots)\oplus \mathbb C (a_4\cdot X_{3\alpha +\beta} + b_4\cdot X_{3\alpha +2\beta})$
  \\ \hline	
	$14$ & $\mathbb C (a_1\cdot X_{\alpha } + b_1\cdot X_{\beta}+\cdots )\oplus \mathbb C X_{3\alpha +2\beta}$
  \\ \hline
	$15$ & $\mathbb C (a_1\cdot X_{\alpha } + b_1\cdot X_{\beta}+\cdots )\oplus [\mathfrak{n},[\mathfrak{n},[\mathfrak{n},\mathfrak{n}]]]$
  \\ \hline
	$16$ & $\mathbb C (X_{\beta} + \cdots)\oplus \mathbb C (X_{2\alpha +\beta} + \cdots)$
	\\ \hline
	$17$ & $\mathbb C (a_1\cdot X_{\alpha } + b_1\cdot X_{\beta}+\cdots )\oplus [\mathfrak{n},[\mathfrak{n},\mathfrak{n}]]$
  \\ \hline	
	$18$ & $\mathbb C (X_{\beta} + \cdots)\oplus \mathbb C (X_{\alpha +\beta} + \cdots)$
	\\ \hline
	$19$ & $\mathbb C (a_1\cdot X_{\alpha } + b_1\cdot X_{\beta}+\cdots )\oplus [\mathfrak{n},\mathfrak{n}]$
  \\ \hline
	$20$ & $\mathbb C (X_{\alpha +\beta} + \cdots)\oplus \mathbb C (X_{2\alpha +\beta} + \cdots)\oplus \mathbb C X_{3\alpha +2\beta}$
	\\ \hline			
	$21$ & $\mathbb C (X_{\alpha} + \cdots)\oplus \mathbb C (X_{2\alpha +\beta} + \cdots)\oplus \mathbb C (X_{3\alpha +\beta} + b_4\cdot X_{3\alpha +2\beta})$
	\\ \hline
	$22$ & $\mathbb C (X_{\beta} + \cdots)\oplus \mathbb C (X_{2\alpha +\beta} + \cdots)\oplus \mathbb C X_{3\alpha +2\beta}$
  \\ \hline
	$23$ & $\mathbb C (X_{\beta} + \cdots)\oplus \mathbb C (X_{\alpha +\beta} + \cdots) \oplus [\mathfrak{n},[\mathfrak{n},[\mathfrak{n},\mathfrak{n}]]]$
	\\ \hline
	$24$ & $\mathbb C (X_{\beta} + \cdots)\oplus \mathbb C (X_{\alpha +\beta} + \cdots) \oplus \mathbb C X_{3\alpha +2\beta}$
	\\ \hline
	$25$ & $\mathbb C (X_{\beta} + \cdots)\oplus \mathbb C (X_{\alpha +\beta} + \cdots)\oplus \mathbb C (X_{2\alpha +\beta} + \cdots) \oplus \mathbb C X_{3\alpha +2\beta}$
	\\ \hline
\end{longtable}
\end{center}

\begin{corollary}\label{corollary:1}
Every non-regular subalgebra of $G_2$ consisting of nilpotent elements is conjugate to one and only one of the following subalgebras of $\mathfrak{n}$:
$$
\mathbb C (X_{\alpha}+X_{3\alpha +2\beta}), \quad \mathbb C (X_{\alpha +\beta}+X_{3\alpha +\beta})\oplus \mathbb C X_{3\alpha +2\beta}, \quad \mathbb C (X_{\alpha}+X_{\beta})\oplus \mathbb C X_{3\alpha +\beta}\oplus \mathbb C X_{3\alpha +2\beta},
$$
$$
\mathbb C (X_{\alpha}+X_{\beta}), \quad \mathbb C (X_{\beta}+X_{3\alpha +\beta})\oplus \mathbb C X_{2\alpha +\beta}, \quad \mathbb C (X_{\alpha}+X_{\beta})\oplus \mathbb C X_{3\alpha +2\beta},
$$
$$
\mathbb C (X_{\alpha +\beta}+X_{3\alpha +\beta})\oplus \mathbb C X_{2\alpha +\beta}\oplus \mathbb C X_{3\alpha +2\beta}, \quad \mathbb C (X_{\beta}+X_{3\alpha +\beta})\oplus \mathbb C X_{2\alpha +\beta}\oplus \mathbb C X_{3\alpha +2\beta},
$$
$$
\mathbb C (X_{\alpha}+X_{\beta})\oplus \mathbb C X_{2\alpha +\beta}\oplus \mathbb C X_{3\alpha +\beta}\oplus \mathbb C X_{3\alpha +2\beta}, \quad \mathbb C (X_{\beta}+X_{2\alpha + \beta})\oplus \mathbb C X_{\alpha +\beta}\oplus \mathbb C X_{3\alpha +\beta}\oplus \mathbb C X_{3\alpha +2\beta},
$$
$$
\mathbb C (X_{\alpha}+X_{\beta})\oplus \mathbb C X_{\alpha +\beta}\oplus \mathbb C X_{2\alpha +\beta}\oplus \mathbb C X_{3\alpha +\beta}\oplus \mathbb C X_{3\alpha +2\beta},
$$
$$
\mathbb C (X_{\beta}+X_{\alpha +\beta}+\lambda \cdot X_{3\alpha +\beta})\oplus \mathbb C X_{2\alpha +\beta}\oplus \mathbb C X_{3\alpha +2\beta},\;\; \lambda \in {\mathbb C}^{*}.
$$
\end{corollary}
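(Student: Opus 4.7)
The plan is to combine a standard reduction for nilpotent subalgebras with the classification of subalgebras of $\mathfrak{n}$ provided by Lemma~\ref{lemma:6}. First, by Engel's theorem applied to the adjoint action, any subalgebra $\mathfrak{g}\subset G_{2}$ consisting of $\operatorname{ad}$-nilpotent elements preserves a complete flag in $G_{2}$ and is therefore contained in the nilradical of some Borel subalgebra. Since all Borel subalgebras of $G_{2}$ are conjugate by inner automorphisms, we may assume from the outset that $\mathfrak{g}\subset \mathfrak{n}$, so the starting list is the $25$ entries of Table~\ref{table:10}.

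The second step is to walk through that table and discard the entries already enumerated in Table~\ref{table:1} --- those in which $\mathfrak{g}$ is a direct sum of root spaces $\bigoplus_{\gamma\in\Sigma}\mathbb{C}X_{\gamma}$. Each remaining entry has a genuine mixing of the form $X_{\gamma}+\cdots$ or $a_{1}X_{\alpha}+b_{1}X_{\beta}+\cdots$ in at least one generator. I would normalise the free scalars using the two kinds of conjugations that preserve $\mathfrak{b}$: the adjoint torus $T\subset G_{2}$, which independently rescales each $X_{\gamma}$ by a root character, and the unipotent conjugations $\exp(\operatorname{ad}(cX_{\delta}))$ for $\delta\in \Phi^{+}$, which rearrange the higher-root tails. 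A judicious choice of $t\in T$ brings each free pair $(a_{i},b_{i})\in \mathbb{C}^{2}$ to one of $(1,0)$, $(0,1)$, or $(1,1)$, and a suitable $\exp(\operatorname{ad}(cX_{\delta}))$ clears the trailing dots whenever the resulting element still lies in $\mathfrak{g}$. Executing this routine on each entry produces exactly the twelve canonical forms listed in the statement.

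The one entry which does not reduce to a single representative, and instead leaves behind a true one-parameter family, is the three-dimensional subalgebra $\mathbb{C}(X_{\beta}+X_{\alpha+\beta}+\lambda X_{3\alpha+\beta})\oplus \mathbb{C}X_{2\alpha+\beta}\oplus \mathbb{C}X_{3\alpha+2\beta}$. Any torus character $\chi$ stabilising this subalgebra must satisfy $\chi_{\beta}=\chi_{\alpha+\beta}$, forcing $\chi_{\alpha}=1$ and hence $\chi_{3\alpha+\beta}=\chi_{\beta}$, so $\lambda$ cannot be absorbed by $T$. All the unipotent conjugations $\exp(\operatorname{ad}(cX_{\delta}))$ with $\delta\in \Phi^{+}\setminus \{\alpha\}$ either preserve the subalgebra without affecting $\lambda$ or can be absorbed into a basis change of the other two generators, while $\exp(\operatorname{ad}(cX_{\alpha}))$ carries the subalgebra outside the canonical form rather than to another member of the family.

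The hardest step is verifying irredundancy: that the twelve canonical forms are pairwise non-conjugate in $G_{2}$, and that distinct values of $\lambda\in \mathbb{C}^{*}$ produce non-conjugate members of the exceptional family. For the twelve discrete cases this is manageable: they are separated by coarse invariants such as the dimension, the dimensions of the derived series, the abstract isomorphism type, and the $G_{2}$-conjugacy class of a generic nilpotent generator, determined by the rank of its adjoint action, in the same spirit as Proposition~\ref{proposition:1}. For the continuous family $\lambda$ is pinned down as a conjugation-invariant scalar attached to $\mathfrak{g}$ --- for instance as the ratio of the $X_{3\alpha+\beta}$-component to the $X_{\beta}$-component of $\operatorname{ad}(u_{1})^{2}(X_{-\beta})$ for the canonical generator $u_{1}=X_{\beta}+X_{\alpha+\beta}+\lambda X_{3\alpha+\beta}$, an invariant that survives the full normaliser of $\mathfrak{g}$ in $\operatorname{Aut}(G_{2})$ once the overall scaling of the generators has been fixed.
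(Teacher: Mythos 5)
Your overall architecture (reduce to $\mathfrak{g}\subset\mathfrak{n}$, run through Table~\ref{table:10}, normalize generators, then prove irredundancy) matches the paper's, but the toolkit you allow yourself is too small, and this creates a genuine gap. You propose to normalize and to test conjugacy only with automorphisms preserving $\mathfrak{b}$ (the torus and $\exp(\operatorname{ad}(cX_{\delta}))$ with $\delta\in\Phi^{+}$). Two subalgebras of $\mathfrak{n}$ can be conjugate in $G_{2}$ only by an element that does \emph{not} preserve $\mathfrak{b}$, and this actually happens here: the paper shows that $\mathbb C (X_{\beta}+X_{2\alpha +\beta}) \oplus \mathbb C X_{3\alpha +\beta} \oplus \mathbb C X_{3\alpha +2\beta}$, which survives your normalization as an apparently new canonical form, is conjugate to $\mathfrak{g}_{-1}$ via $\exp(c\cdot \operatorname{ad}_{X_{\alpha}})\circ\exp(\operatorname{ad}_{X_{-\alpha}})$. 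Likewise the reductions in Lemmas~\ref{lemma:S0}, \ref{lemma:S1} and \ref{lemma:S3} (needed to clear the ``dots'' in entries such as Nos.~16, 18, 23 of Table~\ref{table:10}) require $\exp(\operatorname{ad}_{X_{-\alpha}})$ and the solution of auxiliary polynomial equations; your ``judicious torus element plus positive unipotent'' routine would leave a redundant and partly wrong list. The same restriction undermines your irredundancy argument: an automorphism carrying one canonical form to another need not normalize $\mathfrak{b}$, so checking stability under $T$ and positive unipotents proves nothing.

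The treatment of the family $\mathfrak{g}_{\lambda}$ has a second concrete flaw. Your proposed invariant --- the ratio of components of $\operatorname{ad}(u_{1})^{2}(X_{-\beta})$ --- is computed from $X_{-\beta}$, which is not canonically attached to the abstract subalgebra $\mathfrak{g}_{\lambda}\subset G_{2}$; an automorphism $s$ with $s(\mathfrak{g}_{\lambda'})=\mathfrak{g}_{\lambda}$ moves $X_{-\beta}$ to an element you do not control, so the ``invariant'' is not a priori invariant. The paper's Lemma~\ref{lemma:S2} instead takes an arbitrary $s\in Aut(G_{2})$, tracks $s(X_{2\alpha+\beta})$ (allowing a nonzero component along the generator $X_{\beta}+X_{\alpha+\beta}+\lambda X_{3\alpha+\beta}$, constrained by a centralizer-dimension equation), composes with a chain of $\exp(\operatorname{ad}_{X_{\pm\alpha}})$ and a Cartan-conjugation to force $s(\mathfrak{h})=\mathfrak{h}$, and only then reads off $\lambda=\lambda'$ from the torus action; none of this is optional. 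Finally, your claim that the twelve discrete cases are separated by ``coarse invariants'' is too optimistic: the paper needs normalizer dimensions (Table~\ref{table:40}), the distribution of elements among the four nilpotent orbits, the cited classification of abelian nilpotent $3$-dimensional subalgebras, and ad hoc $\operatorname{ad}^{3}$ computations (e.g.\ to separate $\mathbb C (X_{\beta}+X_{2\alpha +\beta}) \oplus \mathbb C X_{3\alpha +\beta} \oplus \mathbb C X_{3\alpha +2\beta}$ from $\mathbb C (X_{\alpha +\beta}+X_{3\alpha +\beta}) \oplus \mathbb C X_{2\alpha + \beta} \oplus \mathbb C X_{3\alpha +2\beta}$, which agree on the obvious coarse invariants).
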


\begin{proof}
First, recall that the nilpotent orbits in a complex semisimple Lie algebra are parametrized by the conjugacy classes of $\mathfrak{sl}(2,\mathbb C)$ subalgebras (Theorems of Jacobson-Morozov and Kostant, see \cite{Orbits} for a survey). The latter were classified by Malcev \cite{Malcev} and Dynkin \cite{Dynkin}. In particular, $G_2$ has $4$ non-zero nilpotent orbits, Table~\ref{table:20}.\\

\begin{center}
\begin{longtable}{|c|c|c|}
\caption{Nilpotent orbits in $G_2$, \cite{Dynkin}}\label{table:20}\\
\hline
$\mathfrak{sl}(2,\mathbb C)$ subalgebra & Orbit representative & Dimension of the orbit
\\ \hline\hline
	$A_1$ & $X_{\beta}$ & 6
  \\ \hline
	$\tilde{A_1}$ & $X_{\alpha}$ & 8
	\\ \hline
	$A_1^4$ & $X_{\alpha} +X_{3\alpha +2\beta}$ & 10
  \\ \hline
	$A_1^{28}$ & $X_{\alpha} +X_{\beta}$ & 12
	\\ \hline
\end{longtable}
\end{center}

The nilpotent orbits in $G_2$ are distinguished by their dimensions or, equivalently, by the dimensions of the centralizers of their elements. They give the four subalgebras of dimension $1$ of $\mathfrak{n}$ up to conjugacy in $G_2$.\\

We wlll use Table~\ref{table:10} to find the remaining subalgebras. Applying Weyl reflections and automorphisms of the form $\exp(c\cdot X_{\gamma})$, $c\in \mathbb C$, $\gamma \in \Phi$, one can remove some terms from the generators of subalgebras listed in Table~\ref{table:10}. Then we show that the resulting subalgebras are not pairwise conjugate in $G_2$.\\

We will abuse notation by writing $x\in A_1$ or saying that $x$ is 'of type $A_1$', when the element $x\in G_2$ lies in the nilpotent orbit corresponding to the $\mathfrak{sl}(2,\mathbb C)$ subalgebra $A_1$ in Table~\ref{table:20}. Similarly for the other nilpotent orbits.\\

Table~\ref{table:10} gives three regular subalgebras of dimension $2$:
$$
\mathbb C X_{\alpha} \oplus \mathbb C X_{3\alpha +\beta}, \; \mbox{contained in the nilpotent orbit of type}\; \tilde{A_1}, \; \mbox{except for}\; X_{3\alpha +\beta}\in A_1,
$$
$$
\mathbb C X_{\alpha} \oplus \mathbb C X_{3\alpha +2\beta}, \; \mbox{contained in}\; A_1^4, \; \mbox{except for}\; X_{3\alpha +2\beta}\in A_1, X_{\alpha}\in \tilde{A_1},
$$
$$
\mathbb C X_{\beta} \oplus \mathbb C X_{3\alpha +2\beta}, \; \mbox{entirely contained in the nilpotent orbit of type }\; A_1.
$$

It also gives three non-regular subalgebras of dimension $2$:
$$
\mathbb C (X_{\alpha +\beta} + X_{3\alpha +\beta}) \oplus \mathbb C X_{3\alpha +2\beta}, \; \mbox{contained in}\; A_1^4, \; \mbox{except for}\; X_{3\alpha +2\beta}\in A_1,
$$
$$
\mathbb C (X_{\beta } + X_{3\alpha +\beta}) \oplus \mathbb C X_{2\alpha +\beta}, \; \mbox{contained in}\; A_1^4, \; \mbox{except for four elements of}\; \tilde{A_1},
$$
$$
\mathbb C (X_{\alpha } + X_{\beta}) \oplus \mathbb C X_{3\alpha +2\beta}, \; \mbox{contained in}\; A_1^{28}, \; \mbox{except for}\; X_{3\alpha +2\beta}\in A_1.
$$

See Lemma~\ref{lemma:S0}.\\

Table~\ref{table:10} gives four regular subalgebras of dimension $3$:
$$
\mathbb C X_{\beta} \oplus \mathbb C X_{3\alpha +\beta} \oplus \mathbb C X_{3\alpha +2\beta},
$$
$$
\mathbb C X_{\alpha} \oplus \mathbb C X_{2\alpha +\beta} \oplus \mathbb C X_{3\alpha +\beta},
$$
$$
\mathbb C X_{\beta} \oplus \mathbb C X_{\alpha +\beta} \oplus \mathbb C X_{3\alpha +2\beta},\;\;\mbox{abelian},
$$
$$
\mathbb C X_{\alpha} \oplus \mathbb C X_{3\alpha +\beta} \oplus \mathbb C X_{3\alpha +2\beta},\;\;\mbox{abelian}.
$$

It also gives four non-regular subalgebras of dimension $3$:
$$
\mathbb C (X_{\beta} + X_{2\alpha +\beta}) \oplus \mathbb C X_{3\alpha +\beta} \oplus \mathbb C X_{3\alpha +2\beta},
$$
$$
\mathbb C (X_{\alpha} + X_{\beta}) \oplus \mathbb C X_{3\alpha +\beta} \oplus \mathbb C X_{3\alpha +2\beta},
$$
$$
\mathbb C (X_{\alpha +\beta} + X_{3\alpha + \beta}) \oplus \mathbb C X_{2\alpha +\beta} \oplus \mathbb C X_{3\alpha +2\beta},
$$
$$
\mathbb C (X_{\beta} + X_{3\alpha + \beta}) \oplus \mathbb C X_{2\alpha +\beta} \oplus \mathbb C X_{3\alpha +2\beta},\;\;\mbox{abelian}.
$$
There is also a one-parameter family of non-regular subalgebras of dimension three:
$$
\mathbb C (X_{\beta} + X_{\alpha + \beta} + \lambda \cdot X_{3\alpha + \beta}) \oplus \mathbb C X_{2\alpha +\beta} \oplus \mathbb C X_{3\alpha +2\beta},\;\;\lambda \in {\mathbb C}^{*}.
$$

By Lemma~\ref{lemma:S1}, $\mathbb C (X_{\beta}+b_1\cdot X_{2\alpha +\beta}+b_2\cdot X_{3\alpha +\beta})\oplus \mathbb C (X_{\alpha +\beta }+ X_{3\alpha +\beta})\oplus \mathbb C X_{3\alpha +2\beta}$, $b_1,b_2\in \mathbb C$, is conjugate to one of the subalgebras listed above.\\

By \cite{MalcevAbelian}, $G_2$ has exactly $3$ conjugacy classes of abelian subalgebras of dimension $3$ consisting of nilpotent elements. They are shown above.\\

Let $\mathfrak{g}_{\lambda}=\mathbb C (X_{\beta}+X_{\alpha +\beta}+\lambda \cdot X_{3\alpha +\beta})\oplus \mathbb C X_{2\alpha +\beta}\oplus \mathbb C X_{3\alpha +2\beta}\subset G_2$, $\lambda \in {\mathbb C}^{*}$. By Lemma~\ref{lemma:S2}, if $\lambda , \lambda ' \in \mathbb C ^{*}$, $\lambda \neq \lambda '$, then $\mathfrak{g}_{\lambda}$ and $\mathfrak{g}_{\lambda '}$ are not conjugate in $G_2$.\\

Note that $\mathbb C (X_{\beta} + X_{2\alpha +\beta}) \oplus \mathbb C X_{3\alpha +\beta} \oplus \mathbb C X_{3\alpha +2\beta}$ is conjugate to $\mathfrak{g}_{-1}$. Indeed,
$$
\exp(c\cdot ad_{X_{\alpha}})\circ \exp(ad_{X_{-\alpha}}) (X_{\beta}+X_{\alpha+\beta}+X_{2\alpha+\beta}-X_{3\alpha+\beta})=-X_{3\alpha+\beta},
$$
where $c=-2/3$.\\

The dimensions of the normalizers for some of these subalgebras are computed in Table~\ref{table:40}.\\

Suppose $\mathbb C (X_{\beta}+X_{2\alpha +\beta}) \oplus \mathbb C X_{3\alpha +\beta} \oplus \mathbb C X_{3\alpha +2\beta}$ is conjugate to $\mathbb C (X_{\alpha +\beta}+X_{3\alpha +\beta}) \oplus \mathbb C X_{2\alpha + \beta} \oplus \mathbb C X_{3\alpha +2\beta}$ by $s \in Aut(G_2)$ and let $s(X_{\gamma})=a_0\cdot (X_{\alpha +\beta}+X_{3\alpha +\beta}) +a_1\cdot X_{2\alpha +\beta}+a_2\cdot X_{3\alpha +2\beta}$, $\gamma \in \{ 3\alpha +\beta  , 3\alpha +2\beta  \}$. Then
$$
0=ad^3_{s(X_{\gamma})}(X_{-\beta})=-6a_0^3\cdot X_{3\alpha +2\beta},\quad 0=ad^3_{s(X_{\gamma})}(X_{-(3\alpha +\beta )})=(-6a_1^3 -9a_0^2a_1)\cdot X_{3\alpha +2\beta}.
$$

Hence $s(\mathbb C X_{3\alpha +\beta} \oplus \mathbb C X_{3\alpha +2\beta})\subset \mathbb C X_{3\alpha +2\beta}$, a contradiction.\\

Suppose $\mathbb C (X_{\alpha +\beta}+ X_{3\alpha +\beta}) \oplus \mathbb C X_{2\alpha +\beta} \oplus \mathbb C X_{3\alpha +2\beta}$ is conjugate to ${\mathfrak{g}}_{\lambda}=\mathbb C (X_{\beta}+X_{\alpha +\beta}+\lambda \cdot X_{3\alpha +\beta}) \oplus \mathbb C X_{2\alpha + \beta} \oplus \mathbb C X_{3\alpha +2\beta}$, $\lambda \neq -1,0$, by $s \in Aut(G_2)$.\\

By the same argument as in Lemma~\ref{lemma:S2}, we may assume that 
$$
s(\mathfrak{h})=\mathfrak{h},\quad s(X_{2\alpha +\beta})=\eta \cdot X_{2\alpha +\beta},\quad s(X_{3\alpha +2\beta})=\xi \cdot X_{3\alpha +2\beta},\;\; \xi ,\eta \in {\mathbb C}^{*}.
$$

Then
$$
s\mid_{\mathfrak{h}}=Id \;\; \mbox{and}\;\; s(X_{k\alpha +l\beta})=u^kv^l\cdot X_{k\alpha +l\beta}, \;\; u,v\in \mathbb C.
$$

Hence $s(X_{\alpha +\beta}+X_{3\alpha +\beta})=uv\cdot (X_{\alpha +\beta}+u^2\cdot X_{3\alpha +\beta})$ can not lie in ${\mathfrak{g}}_{\lambda}\subset G_2$. This is a contradiction.\\

Table~\ref{table:10} gives two regular subalgebras of dimension $4$:
$$
\mathbb C X_{\alpha} \oplus \mathbb C X_{2\alpha +\beta}  \oplus \mathbb C X_{3\alpha +\beta} \oplus \mathbb C X_{3\alpha +2\beta}, 
$$
with the derived ideal $\mathbb C X_{3\alpha +\beta}$, and
$$
\mathbb C X_{\beta} \oplus \mathbb C X_{2\alpha +\beta}  \oplus \mathbb C X_{3\alpha +\beta} \oplus \mathbb C X_{3\alpha +2\beta},
$$
with the derived ideal $\mathbb C X_{3\alpha +2\beta}$.\\

It also gives two non-regular subalgebras of dimension $4$:
$$
\mathbb C (X_{\alpha} + X_{\beta}) \oplus \mathbb C X_{2\alpha +\beta} \oplus \mathbb C X_{3\alpha +\beta} \oplus \mathbb C X_{3\alpha +2\beta},
$$
with the derived ideal $\mathbb C X_{3\alpha +\beta}\oplus \mathbb C X_{3\alpha +2\beta}$, and 
$$
\mathbb C (X_{\beta} + X_{2\alpha + \beta}) \oplus \mathbb C X_{\alpha +\beta} \oplus \mathbb C X_{3\alpha +\beta} \oplus \mathbb C X_{3\alpha +2\beta},
$$
with the derived ideal $\mathbb C X_{3\alpha +2\beta}$.\\

See Table~\ref{table:40} and Lemma~\ref{lemma:S3}.\\

Table~\ref{table:10} gives two regular subalgebras of dimension $5$:
$$
\mathbb C X_{\alpha} \oplus \mathbb C X_{\alpha +\beta} \oplus \mathbb C X_{2\alpha +\beta}  \oplus \mathbb C X_{3\alpha +\beta} \oplus \mathbb C X_{3\alpha +2\beta}, 
$$
with the derived ideal $\mathbb C X_{2\alpha +\beta}  \oplus \mathbb C X_{3\alpha +\beta} \oplus \mathbb C X_{3\alpha +2\beta}$ and the center $\mathbb C X_{3\alpha +\beta} \oplus \mathbb C X_{3\alpha +2\beta}$, and
$$
\mathbb C X_{\beta} \oplus \mathbb C X_{\alpha +\beta} \oplus \mathbb C X_{2\alpha +\beta}  \oplus \mathbb C X_{3\alpha +\beta} \oplus \mathbb C X_{3\alpha +2\beta}, 
$$
with the derived ideal $\mathbb C X_{3\alpha +2\beta}$.\\

It also gives a non-regular subalgebra of dimension $5$:
$$
\mathbb C (X_{\alpha} + X_{\beta}) \oplus \mathbb C X_{\alpha +\beta}  \oplus \mathbb C X_{2\alpha +\beta} \oplus \mathbb C X_{3\alpha +\beta} \oplus \mathbb C X_{3\alpha +2\beta},
$$
with the derived ideal $\mathbb C X_{2\alpha +\beta}\oplus \mathbb C X_{3\alpha +\beta}\oplus \mathbb C X_{3\alpha +2\beta}$ and the center $\mathbb C X_{3\alpha +2\beta}$.

\end{proof}

The subalgebras found in Corollary~\ref{corollary:1} are recorded in Table~\ref{table:3}, Nos.~1-12.\\

\begin{lemma}\label{lemma:S0}
A subalgebra
$$
\mathfrak{g}_{\mu}=\mathbb C (X_{\beta}+X_{2\alpha +\beta}+\mu\cdot X_{3\alpha +\beta})\oplus \mathbb C (X_{\alpha +\beta }+3 \cdot X_{3\alpha +\beta})\subset G_2, \;\; \mu \in \mathbb C 
$$
is either regular or conjugate to $\mathbb C (X_{\beta} +X_{3\alpha +\beta})\oplus \mathbb C X_{\alpha +\beta}$. 
\end{lemma}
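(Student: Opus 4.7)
The plan is to conjugate $\mathfrak{g}_\mu$ into a short normal form by a sequence of inner automorphisms and then split on the value of $\mu$. First I verify that $\mathfrak{g}_\mu$ is abelian: the bracket $[f_1,f_2]$ has exactly two non-trivial contributions, $3[X_\beta, X_{3\alpha+\beta}] = 3X_{3\alpha+2\beta}$ and $[X_{2\alpha+\beta}, X_{\alpha+\beta}] = -3X_{3\alpha+2\beta}$, which cancel. So $\mathfrak{g}_\mu\subset\mathfrak{n}$ is a two-dimensional abelian subalgebra.

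Next I apply $\exp(\operatorname{ad}_{X_\alpha})$, using the multiplication table to expand the action on each of $X_\beta, X_{\alpha+\beta}, X_{2\alpha+\beta}, X_{3\alpha+\beta}$. A direct computation gives
\[
\exp(\operatorname{ad}_{X_\alpha})(f_1+f_2) = X_\beta + (4+\mu)X_{3\alpha+\beta}, \qquad \exp(\operatorname{ad}_{X_\alpha})(f_2) = X_{\alpha+\beta} - 2X_{2\alpha+\beta},
\]
so $\mathfrak{g}_\mu$ is conjugate to $\widetilde{\mathfrak{g}}_\mu = \mathbb{C}(X_\beta + (4+\mu)X_{3\alpha+\beta}) \oplus \mathbb{C}(X_{\alpha+\beta} - 2X_{2\alpha+\beta})$.

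I then split into two cases. If $\mu = -4$, the first generator is $X_\beta$. Since $[X_{-\alpha},X_\beta]=0$, conjugation by $\exp(-\tfrac14\operatorname{ad}_{X_{-\alpha}})$ fixes $X_\beta$ while killing the $X_{\alpha+\beta}$-component of the second generator modulo $\mathbb{C}X_\beta$; the result is $\mathbb{C}X_\beta \oplus \mathbb{C}X_{2\alpha+\beta}$, which is conjugate via the Weyl element $s_\beta s_\alpha$ to the regular subalgebra $\mathbb{C}X_\alpha \oplus \mathbb{C}X_{3\alpha+2\beta}$ of Table~\ref{table:1}. If $\mu \neq -4$, a Cartan rescaling $\exp(\operatorname{ad}_H)$ with $6\alpha(H)+2\beta(H)$ chosen appropriately normalises the coefficient of $X_{3\alpha+\beta}$ in the first generator to $1$, bringing $\widetilde{\mathfrak{g}}_\mu$ to the form $\mathbb{C}(X_\beta + X_{3\alpha+\beta}) \oplus \mathbb{C}(X_{\alpha+\beta} + \lambda X_{2\alpha+\beta})$ with $\lambda = 2(4+\mu)^{-1/3}\neq 0$. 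Applying $\exp(-\lambda\operatorname{ad}_{X_\alpha})$ and replacing the first generator by a suitable combination yields $\mathbb{C}(X_\beta + (1-\lambda^3)X_{3\alpha+\beta}) \oplus \mathbb{C}(X_{\alpha+\beta} + \lambda X_{2\alpha+\beta})$. When $\lambda^3 = 1$ the first generator collapses to $X_\beta$ and the argument of the previous case yields a regular subalgebra; when $\lambda^3 \neq 1$, an $s_\alpha$-type Weyl normalisation (adjusted by a Cartan element so as to fix $X_\beta+X_{3\alpha+\beta}$), combined with a further $\exp(c\operatorname{ad}_{X_{-\alpha}})$, identifies the subalgebra with the non-regular target $\mathbb{C}(X_\beta+X_{3\alpha+\beta})\oplus\mathbb{C}X_{\alpha+\beta}$.

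The main obstacle is the final identification when $\lambda^3 \neq 1$: one needs an explicit automorphism that preserves the line $\mathbb{C}(X_\beta + X_{3\alpha+\beta})$ while eliminating the $X_{2\alpha+\beta}$-component of the second generator. Constructing it requires careful bookkeeping of several exponentials and the Weyl lift of $s_\alpha$ (which permutes the two short positive roots $\alpha+\beta$ and $2\alpha+\beta$ and exchanges the two long roots $\beta$ and $3\alpha+\beta$). The cleanest route is to couple this explicit computation with an orbit analysis: the target $\mathbb{C}(X_\beta + X_{3\alpha+\beta}) \oplus \mathbb{C}X_{\alpha+\beta}$ is the unique conjugacy class of two-dimensional abelian nilpotent subalgebras of $G_2$ whose generic element lies in the nilpotent orbit $A_1^4$ while also containing an element of the smaller short-root orbit $\tilde{A}_1$, so once the orbit data for $\widetilde{\mathfrak{g}}_\mu$ is checked to match, conjugacy with the target is forced.
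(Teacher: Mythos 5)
Your opening reduction is correct and matches the paper's first step: $\mathfrak{g}_\mu$ is indeed abelian, and $\exp(\mathrm{ad}_{X_\alpha})$ does carry it to $\mathbb C\bigl(X_\beta+(4+\mu)X_{3\alpha+\beta}\bigr)\oplus\mathbb C\bigl(X_{\alpha+\beta}-2X_{2\alpha+\beta}\bigr)$, which after a Cartan rescaling is the paper's $\tilde{\mathfrak g}_\lambda$. The case $\mu=-4$ (i.e.\ $\lambda=0$) is also handled correctly. The problem is the main case $\lambda\neq 0$, which is where all the content of the lemma lives, and there your argument has a genuine gap. First, the claimed intermediate form is not what the computation gives: applying $\exp(t\,\mathrm{ad}_{X_\alpha})$ to $X_{\alpha+\beta}+\lambda X_{2\alpha+\beta}$ yields $X_{\alpha+\beta}+(\lambda-2t)X_{2\alpha+\beta}+3t(\lambda-t)X_{3\alpha+\beta}$, so the only choices of $t$ that keep the second generator free of $X_{3\alpha+\beta}$ are $t=0$ and $t=\lambda$; with $t=\lambda$ one gets $\mathbb C\bigl(X_\beta+(1+\lambda^3)X_{3\alpha+\beta}\bigr)\oplus\mathbb C\bigl(X_{\alpha+\beta}-\lambda X_{2\alpha+\beta}\bigr)$, which after renormalizing is the same shape you started from with a new $\lambda$. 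No sequence of $\exp(c\,\mathrm{ad}_{X_\alpha})$ and Cartan rescalings will kill the $X_{2\alpha+\beta}$ component; one must leave the nilradical. This is exactly what the paper does: it chooses $a_1$ as a root of the quartic $3a_1^4+4(\lambda-1)a_1^3-6\lambda a_1^2-\lambda^2=0$ and applies $\exp(d\,\mathrm{ad}_{X_{-\alpha}})\circ\exp(c\,\mathrm{ad}_{X_\alpha})$ with $c,d$ built from $a_1$ to force one generator onto the line $\mathbb C X_{\alpha+\beta}$. You name this as ``the main obstacle'' but do not carry it out, so the proof is incomplete precisely at its essential step.

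The proposed fallback via orbit invariants does not repair this. The uniqueness claim --- that $\mathbb C(X_\beta+X_{3\alpha+\beta})\oplus\mathbb C X_{\alpha+\beta}$ is the unique conjugacy class of two-dimensional abelian nilpotent subalgebras whose generic element lies in $A_1^4$ and which contains an element of $\tilde{A_1}$ --- is false as stated: the regular subalgebra $\mathbb C X_\alpha\oplus\mathbb C X_{3\alpha+2\beta}$ has generic element in $A_1^4$ and contains $X_\alpha\in\tilde{A_1}$, yet is not conjugate to the target. More fundamentally, even a corrected invariant-matching argument would presuppose the classification of two-dimensional nilpotent subalgebras up to conjugacy, which is the content of Corollary~\ref{corollary:1} --- and that corollary cites Lemma~\ref{lemma:S0}. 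So this route is circular. To complete the proof you need to do the explicit $\exp(\mathrm{ad}_{X_{-\alpha}})$ computation (or an equivalent), as the paper does.
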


\begin{proof}

By applying $\exp(c\cdot ad_{X_{\alpha}})$, $c\in \mathbb C$, we see that $\mathfrak{g}_{\mu}$ is conjugate to
$$
\tilde{\mathfrak{g}}_{\lambda}=\mathbb C (X_{\beta}+\lambda\cdot X_{3\alpha +\beta})\oplus \mathbb C (X_{\alpha +\beta }+X_{2\alpha +\beta}), \;\; \lambda \in \mathbb C.
$$

If $\lambda =0$, then $\tilde{\mathfrak{g}}_{\lambda}$ is regular. Hence we may assume that $\lambda \in {\mathbb C }^{*}$.\\

Let $a_1$ be any solution of 
$$
3\cdot a_1^4+4(\lambda -1)\cdot a_1^3-6\lambda \cdot a_1^2-\lambda ^2=0
$$
if $\lambda \neq -1$, and $a_1=-1/3$ if $\lambda =-1$. Note that $a_1^2-a_1\neq 0$. Let 
$$
c=\frac{\lambda +a_1^2}{2(a_1^2-a_1)},\quad d=\frac{1}{3\cdot (a_1-c)}.
$$

Then 
$$
\exp(d\cdot ad_{X_{-\alpha}})\circ \exp(c\cdot ad_{X_{\alpha}}) (\tilde{\mathfrak{g}}_{\lambda})= \mathbb C X_{\alpha +\beta}\oplus \mathbb C (b_1\cdot X_{3\alpha +\beta}+b_2 \cdot X_{\beta}) ,
$$
which is either regular or conjugate to $\mathbb C (X_{\beta} +X_{3\alpha +\beta})\oplus \mathbb C X_{\alpha +\beta}$.

\end{proof}

\begin{lemma}\label{lemma:S1}
A subalgebra
$$
\mathfrak{g}_{\mu ,\nu}=\mathbb C (X_{\beta}+\mu\cdot X_{2\alpha +\beta}+\nu\cdot X_{3\alpha +\beta})\oplus \mathbb C (X_{\alpha +\beta }+ X_{3\alpha +\beta})\oplus \mathbb C X_{3\alpha +2\beta}\subset G_2, \;\; \mu ,\nu\in \mathbb C 
$$
is either regular or conjugate to one of the following subalgebras:
$$
\mathbb C (X_{\beta} +X_{2\alpha +\beta}+\lambda \cdot X_{3\alpha +\beta})\oplus \mathbb C X_{\alpha +\beta}\oplus \mathbb C X_{3\alpha +2\beta},\;\; \lambda \in \mathbb C,
$$
$$
\mathbb C (X_{\beta} +X_{3\alpha +\beta})\oplus \mathbb C X_{2\alpha +\beta}\oplus \mathbb C X_{3\alpha +2\beta},\quad \mathbb C (X_{\alpha + \beta} +X_{3\alpha +\beta})\oplus \mathbb C X_{2\alpha +\beta}\oplus \mathbb C X_{3\alpha +2\beta},
$$
$$
\mathbb C (X_{\beta} +X_{2\alpha +\beta})\oplus \mathbb C X_{3\alpha +\beta}\oplus \mathbb C X_{3\alpha +2\beta}.
$$

\end{lemma}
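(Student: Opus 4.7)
The plan is to imitate the proof of Lemma~\ref{lemma:S0}: apply inner automorphisms that preserve $\mathbb{C}X_{3\alpha+2\beta}$ and act in a tractable polynomial fashion on the other positive root vectors, combine with linear recombination inside the $3$-dimensional subspace, and recognise the resulting subalgebra as either one of the four listed forms or as regular. The main tool is the one-parameter family $s_c := \exp(c\cdot ad_{X_\alpha})$ for $c\in\mathbb{C}$, supplemented by the diagonal rescalings $\exp(ad_z)$ for $z\in\mathfrak{h}$ (which multiply $X_{k\alpha+l\beta}$ by $e^{(k\alpha+l\beta)(z)}$) and, in the degenerate case, by the Weyl reflection $s_\alpha$, which fixes $X_{3\alpha+2\beta}$ and swaps $X_\beta\leftrightarrow X_{3\alpha+\beta}$ and $X_{\alpha+\beta}\leftrightarrow X_{2\alpha+\beta}$ up to signs.

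First I would record the explicit action of $s_c$ on the positive-root part of $\mathfrak{n}$: using the Chevalley relations, $s_c(X_\beta)=X_\beta-cX_{\alpha+\beta}+c^2X_{2\alpha+\beta}+c^3X_{3\alpha+\beta}$, $s_c(X_{\alpha+\beta})=X_{\alpha+\beta}-2cX_{2\alpha+\beta}-3c^2X_{3\alpha+\beta}$, $s_c(X_{2\alpha+\beta})=X_{2\alpha+\beta}+3cX_{3\alpha+\beta}$, while $X_{3\alpha+\beta}$ and $X_{3\alpha+2\beta}$ are fixed. Applying $s_c$ to the defining generators of $\mathfrak{g}_{\mu,\nu}$ and adding $c$ times the image of $X_{\alpha+\beta}+X_{3\alpha+\beta}$ to the image of $X_\beta+\mu X_{2\alpha+\beta}+\nu X_{3\alpha+\beta}$ (to kill the $X_{\alpha+\beta}$-component of the first generator) produces an equivalent basis of $s_c(\mathfrak{g}_{\mu,\nu})$ of the shape
\begin{align*}
e_1' &= X_\beta + (\mu-c^2)X_{2\alpha+\beta} + \bigl(\nu+c(1+3\mu)-2c^3\bigr)X_{3\alpha+\beta},\\
e_2' &= X_{\alpha+\beta} - 2c\,X_{2\alpha+\beta} + (1-3c^2)X_{3\alpha+\beta},\\
e_3' &= X_{3\alpha+2\beta}.
\end{align*}

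The remaining work is a case analysis in $(\mu,\nu)$. Over $\mathbb{C}$ one can always solve $c^2=\mu$; with that choice the $X_{2\alpha+\beta}$-coefficient of $e_1'$ vanishes and its $X_{3\alpha+\beta}$-coefficient collapses to $\nu+c+c^3$. According to whether this coefficient, or $1-3c^2$, or the bracket coefficient $1-3\mu$ (which governs whether $\mathfrak{g}_{\mu,\nu}$ is abelian, since $[e_1,e_2]=(1-3\mu)X_{3\alpha+2\beta}$) vanishes, one recognises the subalgebra, after a further diagonal rescaling to normalise the remaining nonzero coefficients, as one of (a)--(d) or as regular. The degenerate sub-case $\mu=\nu=0$ is handled separately by $s_\alpha$: applied to $\mathfrak{g}_{0,0}=\mathbb{C}X_\beta\oplus\mathbb{C}(X_{\alpha+\beta}+X_{3\alpha+\beta})\oplus\mathbb{C}X_{3\alpha+2\beta}$ it produces (up to signs) $\mathbb{C}X_{3\alpha+\beta}\oplus\mathbb{C}(X_\beta+X_{2\alpha+\beta})\oplus\mathbb{C}X_{3\alpha+2\beta}$, which is form (d).

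The expected main obstacle is the bookkeeping: the $(\mu,\nu)$-plane splits into several strata determined by the zero loci of $\mu-c^2$, $\nu+c+c^3$, $1-3c^2$ and $1-3\mu$, and on each stratum the subalgebra must be matched unambiguously to one of the four targets or to a regular subalgebra. A few strata may also need an auxiliary application of $\exp(d\cdot ad_{X_{-\alpha}})$, whose action on positive root vectors was already computed in the proof of Corollary~\ref{corollary:1}, to remove a residual cross term before the final diagonal rescaling.
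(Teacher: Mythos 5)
Your setup is the right one (work inside the $\tilde A_1$-module $\mathbb C X_{\beta}\oplus\mathbb C X_{\alpha+\beta}\oplus\mathbb C X_{2\alpha+\beta}\oplus\mathbb C X_{3\alpha+\beta}\oplus\mathbb C X_{3\alpha+2\beta}$ using $\exp(c\cdot ad_{X_\alpha})$, the torus and $s_\alpha$), and your formulas for $s_c$ and for $e_1',e_2'$ are correct, as is the treatment of $\mu=\nu=0$ by $s_\alpha$. But the key choice $c^2=\mu$ does not accomplish what the argument needs. For $\mu\neq 0$ your second generator $e_2'=X_{\alpha+\beta}-2c\,X_{2\alpha+\beta}+(1-3\mu)X_{3\alpha+\beta}$ keeps a nonzero $X_{2\alpha+\beta}$ component, so the pencil $\mathbb C e_1'\oplus\mathbb C e_2'$ contains no pure root vector; since each of the four targets (and each regular subalgebra) does contain a pure root vector, your ``normal form'' is never literally one of them, and neither a diagonal rescaling nor $s_\alpha$ (which only permutes and rescales root vectors) can create one. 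A parameter count makes the failure concrete: the torus acts on the residual data $(\kappa,\,-2c,\,1-3\mu)$ only through $t=e^{\alpha(h)}$, by $(\kappa t^3,\,-2ct,\,(1-3\mu)t^2)$, and this does not even preserve the constraint $1-3\mu=1-3c^2$ unless $t=\pm1$; so your reduction leaves a genuinely two-parameter family of representatives, while the lemma asserts only a one-parameter family of conjugacy classes. The auxiliary $\exp(d\cdot ad_{X_{-\alpha}})$ you reserve for ``a few strata'' is therefore needed generically, and applying it repopulates every coefficient of both generators, so the procedure as described neither terminates visibly nor matches the targets.

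The missing idea is the one the paper encodes in its quartic~(\ref{equation:5}). Identifying the four-dimensional piece with binary cubics $D_{3/2}$, every target contains a cubic with a repeated root, so one must first locate a \emph{degenerate} member $e_1+a_1e_2$ of the pencil and choose $c$ so that $\exp(c\cdot ad_{X_\alpha})(e_1+a_1e_2)$ lands in $\mathbb C X_\beta\oplus\mathbb C X_{\alpha+\beta}$; this imposes two conditions on $(a_1,c)$, and eliminating $c$ yields a quartic in $a_1$ (essentially the discriminant of the pencil), not the quadratic $c^2=\mu$. Once such an element is normalized to $X_{\alpha+\beta}$ (one further $\exp(d\cdot ad_{X_{-\alpha}})$) or to $X_\beta$, the remaining generator reduces modulo it to $b_0X_\beta+b_1X_{2\alpha+\beta}+b_2X_{3\alpha+\beta}$ and the case analysis you envisage does close up. Without that step your argument stalls exactly at the generic stratum.
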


\begin{proof}

Let $a_1$ be any solution of
\begin{equation}\label{equation:5}
a_1^4+\nu \cdot a_1^3+\left( \frac{3\mu ^2 -6\mu -1}{4} \right) \cdot a_1^2-\nu\cdot \left( \frac{3\mu+1}{2} \right) \cdot a_1 -\left( \mu ^3 +\frac{\nu ^2}{4} \right)=0,
\end{equation}
different from $\pm \sqrt{\mu}$, and take
$$
c=\frac{\nu+\mu a_1+a_1}{2\cdot (a_1^2-\mu )}.
$$

Then
$$
\exp(c\cdot ad_{X_{\alpha}})(X_{\beta}+\mu\cdot X_{2\alpha +\beta}+\nu\cdot X_{3\alpha +\beta} +a_1\cdot (X_{\alpha +\beta}+X_{3\alpha +\beta}))=X_{\beta}+(a_1-c)\cdot X_{\alpha +\beta}.
$$

Hence $\mathfrak{g}_{\mu ,\nu}$ is either regular or conjugate to 
$$
\mathbb C (b_0\cdot X_{\beta} +b_1\cdot X_{2\alpha +\beta}+b_2\cdot X_{3\alpha +\beta}) \oplus \mathbb C X_{\alpha +\beta}\oplus \mathbb C X_{3\alpha +2\beta}.
$$

If all solutions of (\ref{equation:5}) are $\pm \sqrt{\mu}$, then either $\mu =\nu =0$ or $\mu =-1$, $\nu =0$.\\

In the latter case, we proceed as before with $c=a_1=\sqrt{\mu}$ and obtain
$$
\exp(c\cdot ad_{X_{\alpha}})(X_{\beta}+\mu\cdot X_{2\alpha +\beta} +a_1\cdot (X_{\alpha +\beta}+X_{3\alpha +\beta}))=X_{\beta},
$$
i.e. $\mathfrak{g}_{\mu ,\nu}$ is conjugate to $\mathbb C X_{\beta} \oplus \mathbb C X_{3\alpha +\beta}\oplus \mathbb C X_{3\alpha +2\beta}$.

\end{proof}

\begin{lemma}\label{lemma:S2}
The subalgebras 
$$
\mathfrak{g}_{\lambda}=\mathbb C (X_{\beta}+X_{\alpha +\beta}+\lambda \cdot X_{3\alpha +\beta})\oplus \mathbb C X_{2\alpha +\beta}\oplus \mathbb C X_{3\alpha +2\beta}\subset G_2, \;\; \lambda \in {\mathbb C }^{*}
$$
are not pairwise conjugate. 
\end{lemma}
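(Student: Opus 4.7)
The plan is to suppose $s\in\mathrm{Aut}(G_2)$ conjugates $\mathfrak{g}_\lambda$ to $\mathfrak{g}_{\lambda'}$ for $\lambda,\lambda'\in\mathbb{C}^{*}$ and to deduce $\lambda=\lambda'$. Write $f_\lambda:=X_\beta+X_{\alpha+\beta}+\lambda X_{3\alpha+\beta}$, so $\mathfrak{g}_\lambda=\mathbb{C}f_\lambda\oplus\mathbb{C}X_{2\alpha+\beta}\oplus\mathbb{C}X_{3\alpha+2\beta}$. Since the only nonzero bracket inside $\mathfrak{g}_\lambda$ is $[f_\lambda,X_{2\alpha+\beta}]=3X_{3\alpha+2\beta}$, one has $Z(\mathfrak{g}_\lambda)=[\mathfrak{g}_\lambda,\mathfrak{g}_\lambda]=\mathbb{C}X_{3\alpha+2\beta}$, so $s(X_{3\alpha+2\beta})=\xi X_{3\alpha+2\beta}$ for some $\xi\in\mathbb{C}^{*}$.

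Write $s(X_{2\alpha+\beta})=a_0 f_{\lambda'}+a_1 X_{2\alpha+\beta}+a_2 X_{3\alpha+2\beta}$; I would next show $a_0=0$. Since $X_{2\alpha+\beta}$ is a short root vector, its orbit in $G_2$ is $\tilde A_1$, which has centralizer dimension $6$ (Table~\ref{table:20}). If $a_0\neq 0$, rescale to $y':=f_{\lambda'}+(a_1/a_0)X_{2\alpha+\beta}+(a_2/a_0)X_{3\alpha+2\beta}$; a direct calculation of $\ker\mathrm{ad}_{y'}$ in Chevalley coordinates (solving the linear system obtained by setting each Chevalley coefficient of $[z,y']$ to zero, in the style of the proof of Proposition~\ref{proposition:1}) shows $\dim Z_{G_2}(y')=4$, so $y'$ lies in the $10$-dimensional orbit $A_1^{4}$ rather than in $\tilde A_1$ --- contradiction. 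Hence $a_0=0$ and $s(X_{2\alpha+\beta})=a_1 X_{2\alpha+\beta}+a_2 X_{3\alpha+2\beta}$ with $a_1\neq 0$. Composing $s$ with $\exp\!\bigl(-\tfrac{a_2}{3a_1}\,\mathrm{ad}_{X_{\alpha+\beta}}\bigr)$ --- an automorphism that normalizes $\mathfrak{g}_{\lambda'}$, because $\mathrm{ad}_{X_{\alpha+\beta}}$ annihilates $f_{\lambda'}$ and $X_{3\alpha+2\beta}$ and sends $X_{2\alpha+\beta}$ into $\mathbb{C}X_{3\alpha+2\beta}$ --- we may assume $s(X_{2\alpha+\beta})=\eta X_{2\alpha+\beta}$.

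Now $s$ lies in the closed subgroup $H\subset G_2$ stabilizing the pair of lines $(\mathbb{C}X_{2\alpha+\beta},\mathbb{C}X_{3\alpha+2\beta})$. A direct root-space check (listing those $X_\gamma$ commuting with both $X_{2\alpha+\beta}$ and $X_{3\alpha+2\beta}$) gives
$$
\mathrm{Lie}(H)=\mathfrak{h}\oplus\mathbb{C}X_\beta\oplus\mathbb{C}X_{2\alpha+\beta}\oplus\mathbb{C}X_{3\alpha+\beta}\oplus\mathbb{C}X_{3\alpha+2\beta},
$$
so $H=T\ltimes U$, where $T$ is the maximal torus and $U$ is the unipotent subgroup generated by $\exp(cX_\gamma)$ for $\gamma\in\{\beta,2\alpha+\beta,3\alpha+\beta,3\alpha+2\beta\}$. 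Each such generator also normalizes $\mathfrak{g}_{\lambda'}$, so after absorbing the $U$-component of $s$ we may assume $s=t\in T$; equivalently $s(\mathfrak{h})=\mathfrak{h}$, $s|_{\mathfrak{h}}=\mathrm{Id}$, and $s(X_{k\alpha+l\beta})=u^k v^l X_{k\alpha+l\beta}$ for some $u,v\in\mathbb{C}^{*}$. Finally,
$$
s(f_\lambda)=vX_\beta+uvX_{\alpha+\beta}+\lambda u^3 vX_{3\alpha+\beta}
$$
must be a scalar multiple of $f_{\lambda'}$ (the right-hand side has no $X_{2\alpha+\beta}$ or $X_{3\alpha+2\beta}$ component), and matching the ratios $v:uv:\lambda u^3 v=1:1:\lambda'$ forces $u=1$ and hence $\lambda=\lambda'$.

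The main obstacle is the centralizer computation forcing $a_0=0$: because $f_{\lambda'}$ spans three distinct root directions, the kernel equations for $\mathrm{ad}_{y'}$ entangle most basis elements, and one must carefully track the resulting linear system --- in particular, the finitely many special values of $(a_1/a_0,a_2/a_0,\lambda')$ at which the generic-rank determinant drops need to be excluded by hand. Once this step is in place, the reduction to a torus element and the final coefficient matching are routine.
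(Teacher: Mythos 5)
The final two stages of your argument (reduction to a torus element, then matching the ratios $v:uv:\lambda u^3v$) are sound and coincide with the paper's endgame. The fatal problem is the step where you force $a_0=0$. Your claim is that any $y'=f_{\lambda'}+(a_1/a_0)X_{2\alpha+\beta}+(a_2/a_0)X_{3\alpha+2\beta}$ has $\dim Z_{G_2}(y')=4$, i.e.\ lies in the $10$-dimensional orbit $A_1^4$. This is false. Since every term of $y'$ (after killing the $X_{3\alpha+2\beta}$ component, which you may do) has $\beta$-coefficient $1$, the operator $\mathrm{ad}_{y'}$ raises the $\beta$-grading by exactly $1$, so $\mathrm{ad}_{y'}^4$ can only be nonzero on $X_{-(3\alpha+2\beta)}$; a direct computation gives
$$
\mathrm{ad}_{y'}^4\bigl(X_{-(3\alpha+2\beta)}\bigr)=6\bigl(4a_1^3-3a_1^2+6\lambda' a_1+\lambda'^2-4\lambda'\bigr)\cdot X_{3\alpha+2\beta}
$$
(with $a_0$ normalized to $1$), and this cubic in $a_1$ — which is exactly the displayed equation in the paper's proof — has roots for \emph{every} $\lambda'$. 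At such a root one has $\mathrm{ad}_{y'}^4=0$ and $\mathrm{ad}_{y'}^3\neq 0$, and since the nilpotency indices $3,4,5,11$ separate the four nonzero nilpotent orbits, $y'$ lies in $\tilde A_1$ with a $6$-dimensional centralizer. A concrete instance: for $\lambda'=4$ and $a_1=a_2=0$, the element $X_\beta+X_{\alpha+\beta}+4X_{3\alpha+\beta}\in\mathfrak{g}_4$ itself is conjugate to a short root vector. So the centralizer criterion cannot rule out $a_0\neq 0$, and without that your whole reduction to the stabilizer $H$ of the pair of lines collapses.

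The paper takes the opposite view of this dichotomy: it allows $a_0\neq 0$, uses the centralizer-dimension condition only to deduce $a_1\neq 1$, and then explicitly straightens $s(X_{2\alpha+\beta})$ back into $\mathbb{C}X_{2\alpha+\beta}$ by composing with $\exp(c\cdot\mathrm{ad}_{X_\alpha})$, $\exp(d\cdot\mathrm{ad}_{X_{-\alpha}})$ and a reflection $\theta$, checking that this can be done compatibly with preserving $\mathfrak{g}_\lambda$. To repair your proof you would either have to carry out such a straightening yourself, or find a genuinely different invariant that rules out $a_0\neq 0$; the centralizer dimension does not. (A secondary, lesser issue: you assert $H=T\ltimes U$ for the stabilizer of the pair of lines without addressing whether that stabilizer is connected; the paper instead invokes conjugacy of Cartan subalgebras inside the solvable algebra $\mathfrak{h}\oplus\mathbb{C}X_\beta\oplus\mathbb{C}X_{2\alpha+\beta}\oplus\mathbb{C}X_{3\alpha+\beta}\oplus\mathbb{C}X_{3\alpha+2\beta}$ to reach $s(\mathfrak{h})=\mathfrak{h}$.)
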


\begin{proof}

Suppose $\lambda , {\lambda }'\in {\mathbb C}^{*}$, $\lambda \neq {\lambda }'$, $\lambda \neq -1$ and there exists $s\in Aut(G_2)$ such that $s(\mathfrak{g}_{\lambda '})=\mathfrak{g}_{\lambda}$. Since $s$ preserves the derived ideals, $s(X_{3\alpha +2\beta })=\xi \cdot X_{3\alpha +2\beta }$, $\xi \in {\mathbb C}^{*}$.\\

Let $s(X_{2\alpha +\beta})=a_0\cdot (X_{\beta}+X_{\alpha +\beta}+\lambda \cdot X_{3\alpha +\beta}) + a_1\cdot X_{2\alpha +\beta} +a_2\cdot X_{3\alpha +2\beta}$. By composing $s$ with $\exp(c\cdot ad_{X_{\gamma}})$ for some $c\in \mathbb C$ and $\gamma \in \{ \alpha +\beta , 2\alpha +\beta \}$, we may assume that $a_2=0$.\\

Suppose $a_0\neq 0$. Then we may assume that $a_0=1$. The condition that the centralizer of $s(X_{2\alpha +\beta})$ has dimension $6$ can be written as the following equation:
\begin{equation*}\label{equation:1}
4\cdot (a_1-1)^3+9\cdot (a_1-1)^2+6\cdot (\lambda +1)(a_1-1)+(\lambda +1)^2=0.
\end{equation*}

This implies that $a_1\neq 1$. Take 
$$
c=\frac{\lambda +a_1}{2\cdot (1-a_1)},\quad d=\frac{1}{3\cdot (1-c)}, \quad {\theta}=\exp(ad_{X_{\alpha}})\circ \exp(-ad_{X_{-\alpha}})\circ \exp(ad_{X_{\alpha}})\in Aut(G_2)
$$
and consider $s_1=\theta \circ \exp(d\cdot ad_{X_{-\alpha}}) \circ \exp(c\cdot ad_{X_{\alpha}})\circ s$. After a rescaling, we obtain $s_2\in Aut(G_2)$ such that
$$
s_2(\mathfrak{g}_{\lambda '})=\mathfrak{g}_{\lambda},\quad s_2(X_{3\alpha +2\beta })=\xi \cdot X_{3\alpha +2\beta },\quad s_2(X_{2\alpha +\beta })=\eta \cdot X_{2\alpha +\beta },\quad \xi ,\eta \in {\mathbb C}^{*}.
$$

Suppose $s\in Aut (G_2)$ has these properties. Since $s$ preserves the Lie brackets $[x,X_{3\alpha +2\beta}]$, $[x,X_{2\alpha +\beta}]$, $x\in \mathfrak{h}$,
$$
s(x)=x+b_0\cdot X_{\beta} +b_1\cdot X_{2\alpha + \beta} +b_2\cdot X_{3\alpha + \beta} +b_3\cdot X_{3\alpha + 2\beta}.
$$

By the conjugacy of Cartan subalgebras in the Lie algebra 
$$
\mathfrak{h} \oplus \mathbb C X_{\beta} \oplus \mathbb C X_{2\alpha +\beta} \oplus \mathbb C X_{3\alpha +\beta}\oplus \mathbb C X_{3\alpha +2\beta},
$$
we can find $\tau \in Aut(G_2)$ such that $\tau (s(\mathfrak{h}))=\mathfrak{h}$, $\tau (\mathbb C X_{2\alpha +\beta})=\mathbb C X_{2\alpha +\beta}$, $\tau (\mathbb C X_{3\alpha +2\beta})=\mathbb C X_{3\alpha +2\beta}$, $\tau (\mathfrak{g}_{\lambda})=\mathfrak{g}_{\lambda}$.\\

By composing $s$ with $\tau$, we may assume that 
$$
s(\mathfrak{g}_{\lambda '})=\mathfrak{g}_{\lambda},\quad s(X_{3\alpha +2\beta })=\xi \cdot X_{3\alpha +2\beta },\quad s(X_{2\alpha +\beta })=\eta \cdot X_{2\alpha +\beta },\quad s(\mathfrak{h})=\mathfrak{h}.
$$

This implies that $s\mid _{\mathfrak{h}}=Id$. Hence $s$ is a rescaling: $s(X_{k\alpha +l\beta})=u^kv^l\cdot X_{k\alpha +l\beta}$, $u,v\in {\mathbb C}^{*}$.\\

Then $s(X_{\beta}+X_{\alpha +\beta}+{\lambda }' \cdot X_{3\alpha +\beta})=v\cdot (X_{\beta}+u\cdot X_{\alpha +\beta}+{\lambda }' \cdot u^3\cdot X_{3\alpha +\beta})$. This element can not lie in $\mathfrak{g}_{\lambda}$ if $\lambda \neq {\lambda}'$. This is a contradiction.

\end{proof}

\begin{lemma}\label{lemma:S3}
A subalgebra
$$
\mathfrak{g}_{\lambda}=\mathbb C (X_{\beta}+X_{3\alpha +\beta})\oplus \mathbb C (X_{\alpha +\beta }+\lambda \cdot X_{3\alpha +\beta})\oplus \mathbb C X_{2\alpha +\beta}\oplus \mathbb C X_{3\alpha +2\beta}\subset G_2, \;\; \lambda \in \mathbb C 
$$
is either regular or conjugate to $\mathbb C (X_{\beta} +X_{2\alpha +\beta})\oplus \mathbb C X_{\alpha +\beta} \oplus \mathbb C X_{3\alpha +\beta}\oplus \mathbb C X_{3\alpha +2\beta}$. 
\end{lemma}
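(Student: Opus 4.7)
The strategy parallels Lemmas~\ref{lemma:S0}--\ref{lemma:S2}: apply an automorphism of the form $\exp(c\cdot\operatorname{ad}_{X_{-\alpha}})$ with $c$ chosen to satisfy a polynomial equation, exploiting the fact that $X_{2\alpha+\beta}$ and $X_{3\alpha+2\beta}$ already belong to $\mathfrak{g}_\lambda$ so that all corrections by these root vectors are absorbed by linear combinations. The ambient $\alpha$-string $\mathbb{C}X_\beta\oplus\mathbb{C}X_{\alpha+\beta}\oplus\mathbb{C}X_{2\alpha+\beta}\oplus\mathbb{C}X_{3\alpha+\beta}\cong D_{3/2}$ from Proposition~\ref{proposition:2} tells us that the $\operatorname{ad}_{X_{-\alpha}}$-orbit of $X_{3\alpha+\beta}$ cycles through all four of these root vectors in turn.

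First, I would compute $\exp(c\cdot\operatorname{ad}_{X_{-\alpha}})$ on each generator of $\mathfrak{g}_\lambda$ using the brackets $[X_{-\alpha},X_\beta]=0$, $[X_{-\alpha},X_{\alpha+\beta}]=-3X_\beta$, $[X_{-\alpha},X_{2\alpha+\beta}]=-2X_{\alpha+\beta}$, $[X_{-\alpha},X_{3\alpha+\beta}]=X_{2\alpha+\beta}$, and $[X_{-\alpha},X_{3\alpha+2\beta}]=0$. Taking linear combinations of the images (using the transformed $X_{2\alpha+\beta}$ and $X_{3\alpha+2\beta}$ to eliminate the $X_{\alpha+\beta}$ and $X_{2\alpha+\beta}$ components from the generators that do not need them), $\mathfrak{g}_\lambda$ reduces to
\begin{align*}
\mathfrak{g}_\lambda = \;&\mathbb{C}\bigl(X_{3\alpha+\beta}+(1+c^3+c^2\lambda)X_\beta\bigr)\oplus\mathbb{C}\bigl(X_{\alpha+\beta}-(3c+\lambda)X_\beta\bigr)\\
&\oplus\mathbb{C}\bigl(X_{2\alpha+\beta}-(3c^2+2c\lambda)X_\beta\bigr)\oplus\mathbb{C}X_{3\alpha+2\beta}.
\end{align*}

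Second, I would choose $c\in\mathbb{C}$ to be a root of the cubic $1+c^3+c^2\lambda=0$, which always has a solution. This forces the first generator to become the pure root vector $X_{3\alpha+\beta}$, so the subalgebra contains $X_{3\alpha+\beta}$ and $X_{3\alpha+2\beta}$ as standalone generators. The remaining two mixed generators carry residual coefficients $D=-(3c+\lambda)$ and $E=-(3c^2+2c\lambda)$ in front of $X_\beta$. A subsequent torus rescaling with $u=e^{\alpha(h)}$, $s=e^{\beta(h)}$ sends $D\mapsto D/u$, $E\mapsto E/u^2$, leaving the standalone generators unchanged.

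Finally, I would case-analyse. If the cubic $1+c^3+c^2\lambda=0$ admits a root simultaneously annihilating $3c+\lambda$ or $3c^2+2c\lambda$ (i.e.\ for certain exceptional values of $\lambda$), the subalgebra becomes regular from Table~\ref{table:1}. For all other $\lambda$, a further application of $\exp(e\cdot\operatorname{ad}_{X_\alpha})$ combined with the already-standalone $X_{3\alpha+\beta}$ and a torus rescaling converts the two mixed generators into a standalone $X_{\alpha+\beta}$ and a mixed $X_\beta+X_{2\alpha+\beta}$, producing the target $\mathbb{C}(X_\beta+X_{2\alpha+\beta})\oplus\mathbb{C}X_{\alpha+\beta}\oplus\mathbb{C}X_{3\alpha+\beta}\oplus\mathbb{C}X_{3\alpha+2\beta}$. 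The main obstacle is this last step: carefully tracking the coefficients through the composition of three automorphisms ($\exp(c\cdot\operatorname{ad}_{X_{-\alpha}})$, $\exp(e\cdot\operatorname{ad}_{X_\alpha})$, torus) and verifying that the resulting system of polynomial equations in $c,e,u$ always admits a solution consistent with reaching the target, and pinning down exactly which $\lambda\in\mathbb{C}$ fall into the regular case instead.
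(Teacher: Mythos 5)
Your plan follows the paper's proof essentially verbatim: the paper likewise conjugates by $\exp(c\cdot ad_{X_{-\alpha}})$ with $c=1/a_1$ where $a_1^3+a_1\lambda +1=0$ --- which is exactly your cubic $1+c^3+c^2\lambda=0$ --- to make $X_{3\alpha+\beta}$ and $X_{3\alpha+2\beta}$ standalone, and then disposes of the residual $X_{\beta}$-components by the same $\exp(e\cdot ad_{X_{\alpha}})$/torus argument, stated there just as tersely as in your final paragraph. Your intermediate coefficients $1+c^3+c^2\lambda$, $-(3c+\lambda)$, $-(3c^2+2c\lambda)$ check out against the structure constants, so the proposal is correct and takes the same route as the paper.
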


\begin{proof}

Let $\omega = X_{\beta }+X_{3\alpha +\beta}+a_1\cdot (X_{\alpha +\beta}+\lambda \cdot X_{3\alpha +\beta})+a_2\cdot X_{2\alpha +\beta}$, where
$$
a_1^3+a_1\cdot \lambda +1=0,\quad a_2=a_1^2.
$$

If $c=1/a_1$, then $\exp(c\cdot ad_{X_{-\alpha }})(\omega)=(1+a_1\lambda)\cdot X_{3\alpha +\beta}$. Hence $\mathfrak{g}_{\lambda}$ is conjugate to 
$$
\mathbb C (X_{\beta}+b_1\cdot X_{\alpha +\beta}+b_2\cdot X_{2\alpha +\beta})\oplus \mathbb C (b_3\cdot X_{\alpha +\beta }+b_4 \cdot X_{2\alpha +\beta})\oplus \mathbb C X_{3\alpha +\beta}\oplus \mathbb C X_{3\alpha +2\beta},
$$
which is either regular or conjugate to 
$$
\mathbb C (X_{\beta} +X_{2\alpha +\beta})\oplus \mathbb C X_{\alpha +\beta} \oplus \mathbb C X_{3\alpha +\beta}\oplus \mathbb C X_{3\alpha +2\beta}.
$$
\end{proof}


\begin{center}
\begin{longtable}{|c|c|c|}
\caption{Some normalizers}\label{table:40}\\
\hline
Subalgebra of $G_2$ & Normalizer in $G_2$ & \begin{tabular}{@{}c@{}} Dimension of \\ the normalizer \end{tabular} 
\\ \hline\hline
	$\mathfrak{u}_{\alpha}$ & \begin{tabular}{@{}c@{}} $\mathfrak{h}\oplus \mathfrak{u}_{-(3\alpha +2\beta)}\oplus \mathfrak{u}_{-\beta}$ \\ $\oplus \mathfrak{u}_{\alpha } \oplus \mathfrak{u}_{3\alpha +\beta}\oplus \mathfrak{u}_{3\alpha +2\beta}$ \end{tabular} & $7$
  \\ \hline
	$\mathfrak{u}_{\beta}$ & \begin{tabular}{@{}c@{}} $\mathfrak{h}\oplus \mathfrak{u}_{-(3\alpha +\beta)}\oplus \mathfrak{u}_{-(2\alpha +\beta)}\oplus \mathfrak{u}_{-\alpha}$ \\ $\oplus \mathfrak{u}_{\beta } \oplus \mathfrak{u}_{\alpha +\beta}\oplus \mathfrak{u}_{2\alpha +\beta}\oplus \mathfrak{u}_{3\alpha +2\beta}$ \end{tabular} & $9$
  \\ \hline
	$\mathbb C (X_{\alpha}+X_{\beta})$ & \begin{tabular}{@{}c@{}} $\mathbb C H_{9\alpha +5\beta}\oplus \mathbb C (X_{\alpha}+X_{\beta} )$ \\ $\oplus \mathbb C X_{3\alpha +2\beta}$ \end{tabular} & $3$
  \\ \hline
	$\mathbb C (X_{\alpha}+X_{3\alpha +2\beta})$ & \begin{tabular}{@{}c@{}} $\mathbb C H_{3\alpha +\beta}\oplus \mathbb C (3\cdot X_{-\beta}+X_{2\alpha +\beta} )$ \\ $\oplus \mathbb C X_{\alpha }\oplus \mathbb C X_{3\alpha +\beta}\oplus \mathbb C X_{3\alpha +2\beta}$ \end{tabular} & $5$
  \\ \hline\hline	
	$\mathfrak{u}_{\alpha}+\mathfrak{u}_{3\alpha +\beta}$ & \begin{tabular}{@{}c@{}} $\mathfrak{h}\oplus \mathfrak{u}_{-\beta}\oplus \mathfrak{u}_{\alpha}\oplus \mathfrak{u}_{2\alpha +\beta}$ \\ $\oplus \mathfrak{u}_{3\alpha +\beta}\oplus \mathfrak{u}_{3\alpha +2\beta}$ \end{tabular} & $7$
  \\ \hline
	$\mathfrak{u}_{\alpha}+\mathfrak{u}_{3\alpha +2\beta}$ & $\mathfrak{h}\oplus \mathfrak{u}_{\alpha}\oplus \mathfrak{u}_{3\alpha +\beta} \oplus \mathfrak{u}_{3\alpha +2\beta}$ & $5$
  \\ \hline
	$\mathfrak{u}_{\beta}+\mathfrak{u}_{3\alpha +2\beta}$ & \begin{tabular}{@{}c@{}} $\mathfrak{h}\oplus \mathfrak{u}_{-(3\alpha +\beta)}\oplus \mathfrak{u}_{-\alpha}$ \\ $\oplus \bigoplus\limits_{\gamma \in {\Phi}^{+}\setminus \{ \alpha \} } \mathfrak{u}_{\gamma}$ \end{tabular} & $9$
  \\ \hline
	$\mathbb C (X_{\alpha +\beta} + X_{3\alpha +\beta}) \oplus \mathbb C X_{3\alpha +2\beta}$ & $\mathbb C H_{3\alpha +2\beta}\oplus \bigoplus\limits_{\gamma \in {\Phi}^{+}\setminus \{ \alpha \} } \mathbb C X_{\gamma}$  & $6$
  \\ \hline
	$\mathbb C (X_{\beta} + X_{3\alpha +\beta}) \oplus \mathbb C X_{2\alpha +\beta}$ & \begin{tabular}{@{}c@{}} $\mathbb C H_{3\alpha +2\beta}\oplus \mathbb C (X_{\beta} + X_{3\alpha +\beta})$ \\ $\oplus \mathbb C X_{2\alpha +\beta}\oplus \mathbb C X_{3\alpha +2\beta}$ \end{tabular} & $4$
  \\ \hline	
	$\mathbb C (X_{\alpha} + X_{\beta}) \oplus \mathbb C X_{3\alpha +2\beta}$ & \begin{tabular}{@{}c@{}} $\mathbb C H_{9\alpha +5\beta}\oplus \mathbb C (X_{\alpha} + X_{\beta})$ \\ $\oplus \mathbb C X_{3\alpha +\beta}\oplus \mathbb C X_{3\alpha +2\beta}$ \end{tabular} & $4$
  \\ \hline\hline
	$\mathfrak{u}_{\beta} + \mathfrak{u}_{3\alpha +\beta}+\mathfrak{u}_{3\alpha +2\beta}$ & $\mathfrak{h}\oplus \bigoplus\limits_{\gamma \in {\Phi}^{+}\setminus \{ \alpha \} } \mathfrak{u}_{\gamma}$ & $7$
  \\ \hline
	$\mathfrak{u}_{\alpha} + \mathfrak{u}_{2\alpha +\beta} + \mathfrak{u}_{3\alpha +\beta}$ & \begin{tabular}{@{}c@{}} $\mathfrak{h}\oplus \mathfrak{u}_{-\beta}\oplus \mathfrak{u}_{\alpha}\oplus \mathfrak{u}_{2\alpha +\beta}$ \\ $\oplus \mathfrak{u}_{3\alpha +\beta}\oplus \mathfrak{u}_{3\alpha +2\beta}$ \end{tabular} & $7$
	\\ \hline
	$\mathfrak{u}_{\beta} + \mathfrak{u}_{\alpha +\beta}+\mathfrak{u}_{3\alpha +2\beta}$ & \begin{tabular}{@{}c@{}} $\mathfrak{h}\oplus \mathfrak{u}_{-(3\alpha +\beta)}\oplus \mathfrak{u}_{-\alpha}$ \\ $\oplus \bigoplus\limits_{\gamma \in {\Phi}^{+}\setminus \{ \alpha \} } \mathfrak{u}_{\gamma}$ \end{tabular} &  $9$
  \\ \hline
	$\mathfrak{u}_{\alpha} + \mathfrak{u}_{3\alpha +\beta} + \mathfrak{u}_{3\alpha +2\beta}$ & \begin{tabular}{@{}c@{}} $\mathfrak{h}\oplus \mathfrak{u}_{-\beta}\oplus \mathfrak{u}_{\alpha}\oplus \mathfrak{u}_{2\alpha +\beta}$ \\ $\oplus \mathfrak{u}_{3\alpha +\beta}\oplus \mathfrak{u}_{3\alpha +2\beta}$ \end{tabular} &  $7$
	\\ \hline
	\begin{tabular}{@{}c@{}} $\mathbb C (X_{\beta} + X_{3\alpha +\beta})$ \\ $\oplus \mathbb C X_{2\alpha +\beta} \oplus \mathbb C X_{3\alpha +2\beta}$ \end{tabular} & $\mathbb C H_{3\alpha +2\beta}\oplus \bigoplus\limits_{\gamma \in {\Phi}^{+}\setminus \{ \alpha \} } \mathbb C X_{\gamma}$ & $6$
	\\ \hline
	\begin{tabular}{@{}c@{}} $\mathbb C (X_{\alpha} + X_{\beta}) \oplus \mathbb C X_{3\alpha +\beta}$ \\ $\oplus \mathbb C X_{3\alpha +2\beta}$ \end{tabular} & \begin{tabular}{@{}c@{}} $\mathbb C H_{9\alpha +5\beta}\oplus \mathbb C (X_{\alpha}+X_{\beta})$ \\ $\oplus \mathbb C X_{2\alpha +\beta}\oplus \mathbb C X_{3\alpha +\beta} \oplus \mathbb C X_{3\alpha +2\beta}$ \end{tabular} & $5$
  \\ \hline
	\begin{tabular}{@{}c@{}} $\mathbb C (X_{\alpha +\beta} + X_{3\alpha + \beta})$ \\ $\oplus \mathbb C X_{2\alpha +\beta} \oplus \mathbb C X_{3\alpha +2\beta}$ \end{tabular} & $\mathbb C H_{3\alpha +2\beta}\oplus \bigoplus\limits_{\gamma \in {\Phi}^{+}\setminus \{ \alpha \} } \mathbb C X_{\gamma}$ & $6$
	\\ \hline
	\begin{tabular}{@{}c@{}} $\mathbb C (X_{\beta}+X_{\alpha +\beta} + \lambda \cdot X_{3\alpha + \beta})$ \\ $\oplus \mathbb C X_{2\alpha +\beta} \oplus \mathbb C X_{3\alpha +2\beta}$, $\lambda \in \mathbb C ^{*}$ \end{tabular} & $\mathbb C H_{3\alpha +2\beta}\oplus \bigoplus\limits_{\gamma \in {\Phi}^{+}\setminus \{ \alpha \} } \mathbb C X_{\gamma}$ & $6$
	\\ \hline\hline
	\begin{tabular}{@{}c@{}} $\mathfrak{u}_{\alpha} + \mathfrak{u}_{2\alpha + \beta} + \mathfrak{u}_{3\alpha + \beta}$ \\ $ + \mathfrak{u}_{3\alpha + 2\beta}$ \end{tabular} & $\mathfrak{h} \oplus \mathfrak{u}_{-\beta}\oplus \bigoplus\limits_{\gamma \in {\Phi}^{+}\setminus \{ \beta \} } \mathfrak{u}_{\gamma}$ & $8$
	\\ \hline	
	\begin{tabular}{@{}c@{}} $\mathfrak{u}_{\beta} + \mathfrak{u}_{2\alpha + \beta} + \mathfrak{u}_{3\alpha + \beta}$ \\ $ + \mathfrak{u}_{3\alpha + 2\beta}$ \end{tabular} & $\mathfrak{h} \oplus \bigoplus\limits_{\gamma \in {\Phi}^{+}\setminus \{ \alpha \} } \mathfrak{u}_{\gamma}$ & $7$
	\\ \hline	
	\begin{tabular}{@{}c@{}} $\mathbb C (X_{\beta} + X_{2\alpha + \beta})\oplus \mathbb C X_{\alpha +\beta} $ \\ $\oplus \mathbb C X_{3\alpha +\beta}\oplus \mathbb C X_{3\alpha +2\beta}$ \end{tabular} & $\mathbb C H_{3\alpha +2\beta}\oplus \bigoplus\limits_{\gamma \in {\Phi}^{+}\setminus \{ \alpha \} } \mathbb C X_{\gamma}$ & $6$
	\\ \hline
	\begin{tabular}{@{}c@{}} $\mathbb C (X_{\alpha} + X_{\beta}) \oplus \mathbb C X_{2\alpha +\beta}$ \\ $\oplus \mathbb C X_{3\alpha +\beta}\oplus \mathbb C X_{3\alpha +2\beta}$ \end{tabular} & \begin{tabular}{@{}c@{}} $\mathbb C H_{9\alpha +5\beta}\oplus \mathbb C (X_{\alpha}+X_{\beta})$ \\ $\oplus \mathbb C X_{\alpha +\beta}\oplus \mathbb C X_{2\alpha +\beta}$ \\ $\oplus \mathbb C X_{3\alpha +\beta}\oplus \mathbb C X_{3\alpha +2\beta}$ \end{tabular} & $6$
  \\ \hline\hline
	$\bigoplus\limits_{\gamma \in {\Phi}^{+}\setminus \{ \beta \} } \mathbb C X_{\gamma}$ & $\mathfrak{h}\oplus \mathbb C X_{-\beta}\oplus \bigoplus\limits_{\gamma \in {\Phi}^{+}} \mathbb C X_{\gamma}$ &  $9$
  \\ \hline
	$\bigoplus\limits_{\gamma \in {\Phi}^{+}\setminus \{ \alpha \} } \mathbb C X_{\gamma}$ & $\mathfrak{h}\oplus \mathbb C X_{-\alpha}\oplus \bigoplus\limits_{\gamma \in {\Phi}^{+}} \mathbb C X_{\gamma}$ &  $9$
	\\ \hline
	\begin{tabular}{@{}c@{}} $\mathbb C (X_{\alpha} + X_{\beta}) \oplus \mathbb C X_{\alpha +\beta}$ \\ $\oplus \mathbb C X_{2\alpha +\beta}\oplus \mathbb C X_{3\alpha +\beta}$ \\ $\oplus \mathbb C X_{3\alpha +2\beta}$ \end{tabular} & \begin{tabular}{@{}c@{}} $\mathbb C H_{9\alpha +5\beta}\oplus \mathbb C (X_{\alpha}+X_{\beta})$ \\ $\oplus \mathbb C X_{\alpha +\beta}\oplus \mathbb C X_{2\alpha +\beta}$ \\ $\oplus \mathbb C X_{3\alpha +\beta}\oplus \mathbb C X_{3\alpha +2\beta}$ \end{tabular} & $6$
  \\ \hline
\end{longtable}
\end{center}

\begin{proposition}\label{proposition:3}
Let $\mathfrak{g}\subset G_2$ be a solvable non-regular subalgebra. If $\mathfrak{g}$ contains a non-zero semisimple element of $G_2$, then $\mathfrak{g}$ is conjugate to one and only one of the subalgebras of $\mathfrak{b}$ listed in Table~\ref{table:3}, Nos.~13-24.
\end{proposition}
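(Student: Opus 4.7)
The plan is to combine Lemma~\ref{lemma:5}, Corollary~\ref{corollary:1} and the normalizer data in Table~\ref{table:40} in order to reduce the problem to attaching a one-dimensional toral factor to each of the twelve non-regular nilpotent subalgebras of $\mathfrak{n}$. By Lie's theorem I may conjugate so that $\mathfrak{g}\subset\mathfrak{b}$. Since $\mathfrak{g}$ is non-regular, Lemma~\ref{lemma:5} applies; since $\mathfrak{g}$ contains a non-zero semisimple element, $\mathfrak{g}\not\subset\mathfrak{n}$, so $\dim(\mathfrak{g}/(\mathfrak{g}\cap\mathfrak{n}))=1$. A non-zero semisimple $y\in\mathfrak{g}$ lies in some maximal torus of $\mathfrak{b}$, and all such tori are $\exp(\mathrm{ad}\,\mathfrak{n})$-conjugate to $\mathfrak{h}$; this additional conjugation stabilizes $\mathfrak{b}$, so after it I may assume $y\in\mathfrak{h}\cap\mathfrak{g}$. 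Since $\mathfrak{h}\cap\mathfrak{g}$ injects into the one-dimensional quotient $\mathfrak{g}/(\mathfrak{g}\cap\mathfrak{n})$, it is itself one-dimensional, and I can pick the Lemma~\ref{lemma:5} generator $f$ in $\mathfrak{h}\cap\mathfrak{g}$. Thus $\mathfrak{g}=\mathbb{C}f\oplus\mathfrak{n}_{\mathfrak{g}}$ with $\mathfrak{n}_{\mathfrak{g}}:=\mathfrak{g}\cap\mathfrak{n}$ a subalgebra of $\mathfrak{n}$ as enumerated in Table~\ref{table:10}.

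Next I claim that $\mathfrak{n}_{\mathfrak{g}}$ must be non-regular. Indeed, if $\mathfrak{n}_{\mathfrak{g}}=\bigoplus_{\gamma\in\Sigma}\mathbb{C}X_{\gamma}$ for some closed $\Sigma\subset\Phi^{+}$, then $\mathfrak{g}=\mathfrak{g}(\Sigma,\mathbb{C}f)$ is already in the regular form, contradicting the hypothesis. Hence $\mathfrak{n}_{\mathfrak{g}}$ is one of the twelve non-regular nilpotent subalgebras listed in Corollary~\ref{corollary:1}. For each of these, applying the condition $[f,\mathfrak{n}_{\mathfrak{g}}]\subset\mathfrak{n}_{\mathfrak{g}}$ to a non-monomial generator $c_{1}X_{\gamma_{1}}+c_{2}X_{\gamma_{2}}+\cdots$ (such a generator exists, since precisely its presence is what makes $\mathfrak{n}_{\mathfrak{g}}$ non-regular) forces $f$ to annihilate differences like $\gamma_{1}-\gamma_{2}$, cutting $\mathfrak{h}$ down to one explicit line $\mathbb{C}H_{\mu}\subset\mathfrak{h}$. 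This line is precisely the $\mathfrak{h}$-component of the normalizer displayed in the corresponding row of Table~\ref{table:40}. The resulting twelve subalgebras $\mathbb{C}H_{\mu}\oplus\mathfrak{n}_{\mathfrak{g}}$ are exactly the entries Nos.~13--24 of Table~\ref{table:3}.

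Each such $\mathfrak{g}$ is non-regular since its nilpotent part $\mathfrak{n}_{\mathfrak{g}}$ is not a sum of root spaces, and the twelve are pairwise non-conjugate because any conjugation sends the set of nilpotent elements to the set of nilpotent elements, reducing non-conjugacy of the $\mathfrak{g}$'s to the pairwise non-conjugacy of their $\mathfrak{n}_{\mathfrak{g}}$'s already established in Corollary~\ref{corollary:1}. The main obstacle will be the explicit normalizer computation in the middle step: for each of the twelve non-regular nilpotent subalgebras I must verify by direct bracket calculation in the Chevalley basis that the $\mathfrak{h}$-part of its normalizer is exactly the one-dimensional line tabulated in Table~\ref{table:40}. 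These verifications are routine but combinatorially detailed, and they are the only part of the argument not reducible to a previously established fact.
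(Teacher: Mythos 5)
Your overall architecture matches the paper's (reduce via Lemma~\ref{lemma:5} to $f\in\mathfrak h$, then combine with the classification of subalgebras of $\mathfrak n$), and your closing argument for the pairwise non-conjugacy of Nos.~13--24 via the sets of nilpotent elements is sound. The gap is in the central step: from ``$\mathfrak g$ is non-regular'' you conclude that $\mathfrak n_{\mathfrak g}=\mathfrak g\cap\mathfrak n$ is one of the twelve standard non-regular nilpotent subalgebras of Corollary~\ref{corollary:1}. What your argument actually shows is only that $\mathfrak n_{\mathfrak g}$ is not \emph{literally} a direct sum of root spaces in the current coordinates. That is much weaker: $\mathfrak n_{\mathfrak g}$ can fail to be a sum of root spaces and still be a \emph{regular} subalgebra of $G_2$. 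For instance $\mathfrak n_{\mathfrak g}=\mathbb C(X_{\alpha}+X_{3\alpha+\beta})$ is $\mathrm{ad}_f$-invariant for $f=H_{\beta}$ and is not a sum of root spaces, yet $\exp(\tfrac13\cdot\mathrm{ad}_{X_{2\alpha+\beta}})$ carries it to $\mathfrak u_{\alpha}$ while fixing $H_{\beta}$, so $\mathbb C H_{\beta}\oplus\mathbb C(X_{\alpha}+X_{3\alpha+\beta})$ is regular; the same happens for $\mathbb C H_{2\alpha+\beta}\oplus\mathbb C(a\cdot X_{3\alpha+\beta}+b\cdot X_{3\alpha+2\beta})$ and for many other coefficient choices in Table~\ref{table:10}. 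Your dichotomy silently discards all such configurations (each must be \emph{shown} to be regular, hence excluded by hypothesis), and in the remaining cases it assumes without justification that the conjugation bringing $\mathfrak n_{\mathfrak g}$ to one of the twelve standard forms can be chosen so that $f$ stays in $\mathfrak h$. Both points require the case-by-case reductions of the Table~\ref{table:10} entries (the analogues of Lemmata~\ref{lemma:S0}--\ref{lemma:S3} and the $\exp(c\cdot\mathrm{ad}_{X_{\gamma}})$ manipulations) to be redone while tracking $f$ --- this is exactly the content of the paper's phrase ``we proceed as in the proof of Corollary~\ref{corollary:1}'' and is the substantive work of the proposition, which your write-up replaces by an invalid inference.

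A way to close the gap along your lines: since $f$ is semisimple and normalizes $\mathfrak n_{\mathfrak g}$, after conjugating $\mathfrak n_{\mathfrak g}$ to a standard form (a sum of root spaces, or one of the twelve), $f$ becomes a semisimple element of the corresponding normalizer in Table~\ref{table:40}; one then argues that every semisimple element of that normalizer is conjugate, by inner automorphisms preserving $\mathfrak n_{\mathfrak g}$, into its toral part --- which is the displayed line in $\mathfrak h$ in the twelve non-regular cases, and is all of $\mathfrak h$ in the regular cases, whence $\mathfrak g$ itself is regular and excluded. Note finally that your remark ``each such $\mathfrak g$ is non-regular since its nilpotent part is not a sum of root spaces'' repeats the same flawed inference; the correct reason is that the set of nilpotent elements of a regular solvable subalgebra with $\Sigma\subset\Phi^{+}$ is itself a regular subalgebra, whereas here that set is one of the non-regular subalgebras of Corollary~\ref{corollary:1}.
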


\begin{proof}

By Lemma~\ref{lemma:5}, we may assume that $\mathfrak{g}\subset \mathfrak{b}$ contains a non-zero element $f\in \mathfrak{h}$. Then ${\mathfrak{g}}_0=\mathfrak{g} \cap \mathfrak{n}$ is one of the subalgebras listed in Table~\ref{table:10}. We proceed as in the proof of Corollary~\ref{corollary:1}.\\

Note that the presence of several terms in the generators of ${\mathfrak{g}}_0$ imposes a restriction on $f\in \mathfrak{h}\cap \mathfrak{g}$. For example, $\mathbb C f \oplus \mathbb C (X_{\alpha}+X_{\beta})$ is a subalgebra of $G_2$ only if $\alpha (f)=\beta (f)$, i.e. $f\in \mathbb C H_{9\alpha +5\beta}$. The other cases are similar.

\end{proof}

Now we prove the main result of this section.\\

\begin{theorem}\label{theorem:100}
Every non-regular solvable subalgebra of $G_2$ is conjugate to one of the subalgebras listed in Table~\ref{table:3}.\\

The subalgebras of $G_2$ listed in Table~\ref{table:3} are pairwise non-conjugate.
\end{theorem}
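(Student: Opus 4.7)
The plan is to assemble the classification from the preparatory results in three stages, then verify pairwise non-conjugacy using conjugation-invariants of subalgebras.

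Stage one (existence). Let $\mathfrak{g}\subset G_2$ be an arbitrary non-regular solvable subalgebra. By Lemma~\ref{lemma:5} I may conjugate so that $\mathfrak{g}\subset\mathfrak{b}$, and the image of $\mathfrak{g}$ in $\mathfrak{h}\cong\mathfrak{b}/\mathfrak{n}$ is either $0$, or spanned by an element of $\mathfrak{h}\cap\mathfrak{g}$, or spanned by a ``mixed'' element $f=x+\lambda X_{\gamma}$ with $x\in\mathfrak{h}\setminus\{0\}$, $\gamma\in\Phi^{+}$, $\gamma(x)=0$ and $\lambda\in\mathbb{C}^{*}$. In the first case $\mathfrak{g}\subset\mathfrak{n}$, and Corollary~\ref{corollary:1} matches $\mathfrak{g}$ with one of the entries Nos.~1--12 of Table~\ref{table:3}. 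In the second case $\mathfrak{g}$ contains a non-zero semisimple element, so Proposition~\ref{proposition:3} matches $\mathfrak{g}$ with one of Nos.~13--24.

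Stage two (the mixed case). It remains to classify $\mathfrak{g}$ when its only generator modulo $\mathfrak{g}\cap\mathfrak{n}$ is $f=x+\lambda X_{\gamma}$; note $x\notin\mathfrak{g}$, for otherwise $\lambda X_{\gamma}\in\mathfrak{g}$ and $\mathfrak{g}$ would contain the non-zero semisimple element $x$, reducing to the previous stage. I would iterate: for each subalgebra $\mathfrak{n}'=\mathfrak{g}\cap\mathfrak{n}$ appearing in Table~\ref{table:10} and each positive root $\gamma$, determine those $x$ on the line $\gamma(x)=0$ for which $\mathbb{C}f\oplus\mathfrak{n}'$ is bracket-closed, that is, $[\mathrm{ad}_{f},\mathfrak{n}']\subset\mathfrak{n}'$. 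I then discard pairs for which the resulting subalgebra turns out to be regular (after applying $\exp(c\cdot\mathrm{ad}_{X_{\delta}})$ one can test for the presence of a Cartan subalgebra and a closed root subset). Finally I reduce the remaining representatives using inner automorphisms stabilising $\mathfrak{n}'$: rescalings, $\exp(c\cdot\mathrm{ad}_{X_{\delta}})$ with $\delta\in\Phi$, and the Weyl elements fixing $\gamma$ (as was done in Lemmas~\ref{lemma:S0}--\ref{lemma:S3} and in Proposition~\ref{proposition:3}). The output of this enumeration supplies the remaining entries of Table~\ref{table:3}.

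Stage three (uniqueness). The three families Nos.~1--12, Nos.~13--24, and the ``mixed'' block are pairwise disjoint under conjugacy because both ``every element is nilpotent'' and ``some element is non-zero and semisimple'' are conjugation invariants. Within the nilpotent family and the semisimple-containing family, non-conjugacy is already provided by Corollary~\ref{corollary:1} and Proposition~\ref{proposition:3}. Within the mixed family, I would separate representatives by the standard invariants: the radical, the descending central series, the center, and the dimension of the normaliser (computed along the lines of Table~\ref{table:40}), together with the orbit-type of $x$ under the Weyl subgroup preserving the restrictions imposed on $L$ in Table~\ref{table:3}.

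The principal obstacle is stage two. Neither the Cartan-conjugacy argument used in Proposition~\ref{proposition:3} nor the purely nilpotent classification of Corollary~\ref{corollary:1} directly applies: one cannot split off the semisimple part of $f$ by an inner automorphism of $G_2$ (since $\gamma(x)=0$ and $\mathfrak{g}$ contains no element proportional to $X_{\gamma}$ alone), and one must keep track throughout of whether the pair $(f,\mathfrak{n}')$ has sneaked into the regular class after conjugation. The delicate point parallels the analysis in Lemmas~\ref{lemma:S1}--\ref{lemma:S3}: ensuring that two admissible pairs $(f,\mathfrak{n}')$ and $(f',\mathfrak{n}'')$ cannot be identified by an inner automorphism whose restriction to $\mathfrak{h}$ is not a Weyl element.
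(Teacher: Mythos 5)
Your overall architecture coincides with the paper's: split into the all-nilpotent case (Corollary~\ref{corollary:1}), the case containing a non-zero semisimple element (Proposition~\ref{proposition:3}), and the remaining ``mixed'' case where $\mathfrak{g}/(\mathfrak{g}\cap\mathfrak{n})$ is generated by $f=x+\lambda X_{\gamma}$ with $\gamma(x)=0$. The first and third stages of your plan are fine as far as they go. But the entire mathematical content of the theorem beyond the two quoted preparatory results lives in your stage two, and there you have only described a search strategy, not carried it out; you yourself flag it as the ``principal obstacle.'' That is a genuine gap.

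The decisive idea you are missing is that in the mixed case $\mathfrak{g}_0=\mathfrak{g}\cap\mathfrak{n}$ is forced to be \emph{regular}, i.e.\ a direct sum of root spaces $\mathbb{C}X_{\gamma_1}\oplus\cdots\oplus\mathbb{C}X_{\gamma_k}$. The reason is that $\operatorname{ad}_f$ preserves $\mathfrak{g}_0$, and its semisimple part $x$ acts on the root vectors occurring in a ``diagonal'' generator with distinct eigenvalues, so bracketing with $f$ and subtracting the appropriate multiple splits the generator into its root components (e.g.\ if $\mathfrak{g}_0=\mathbb{C}(X_{3\alpha+\beta}+X_{3\alpha+2\beta})$ then necessarily $\gamma=\beta$, and $[f,X_{3\alpha+\beta}+X_{3\alpha+2\beta}]-3\alpha(x)\cdot(X_{3\alpha+\beta}+X_{3\alpha+2\beta})=X_{3\alpha+2\beta}$ would have to lie in the one-dimensional $\mathfrak{g}_0$, a contradiction). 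This observation collapses your proposed enumeration over all $25$ parametrized families of Table~\ref{table:10} to a short finite list: for each $k=\dim\mathfrak{g}_0\in\{0,\dots,5\}$ one only needs the closed subsets $\{\mu_1\prec\cdots\prec\mu_k\}\subset\Phi^{+}$ compatible with the bracket conditions $[X_{\gamma},X_{\mu_i}]\in\mathbb{C}X_{\mu_{i+1}}\oplus\cdots$, taken up to the Weyl group, and each admissible tuple $\{\gamma;\mu_1,\dots,\mu_k\}$ yields exactly one subalgebra $\mathbb{C}(H_{\gamma^{\perp}}+X_{\gamma})\oplus\bigoplus_i\mathbb{C}X_{\mu_i}$ (Nos.~25--49). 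Without this reduction your continuous families do not visibly terminate in a finite table. Likewise, for non-conjugacy within the mixed block the paper does not rely on the soft invariants you list: it shows that any conjugating automorphism can be corrected by automorphisms of the form $\exp(c\cdot\operatorname{ad}_{X_{\delta}})$ so as to preserve $\mathfrak{h}$, whence the data $\{\gamma;\mu_1,\dots,\mu_k\}$ must lie in the same Weyl orbit; this is the precise mechanism that answers the ``delicate point'' you raise at the end, and it needs to be supplied rather than deferred.
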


\begin{proof}
Let $\mathfrak{g}\subset G_2$ be a non-regular solvable subalgebra. If all elements of $\mathfrak{g}$ are nilpotent, we apply Corollary~\ref{corollary:1}. If $\mathfrak{g}$ contains a non-zero semisimple element, we apply Proposition~\ref{proposition:3}. Hence we may assume that $\mathfrak{g}$ contains no non-zero semisimple elements of $G_2$ and not all of its elements are nilpotent.\\

By Lemma~\ref{lemma:5}, we may assume that $\mathfrak{g}\subset \mathfrak{b}$ contains $f=x+X_{\gamma}$, where $x\in \mathfrak{h}$, $x\neq 0$, $\gamma \in {\Phi}^{+}$, $\gamma (x)=0$. Let ${\mathfrak{g}}_0=\mathfrak{g} \cap \mathfrak{n}$.\\

Note that ${\mathfrak{g}}_0$ must be regular:
$$
{\mathfrak{g}}_0 = \mathbb C X_{{\gamma}_1} \oplus \cdots \oplus \mathbb C X_{{\gamma}_k},\quad 0\prec {\gamma}_1 \prec \cdots \prec {\gamma}_k ,\;\; k\geq 0.
$$

For example, if ${\mathfrak{g}}_0=\mathbb C (X_{3\alpha +\beta}+X_{3\alpha +2\beta})$, then $\gamma =\beta$. Hence $X_{3\alpha +2\beta}=[f,X_{3\alpha +\beta}+X_{3\alpha +2\beta}]-3\alpha (x)\cdot (X_{3\alpha +\beta}+X_{3\alpha +2\beta})\in {\mathfrak{g}}_0$, a contradiction. The other cases are similar.\\

If ${\mathfrak{g}}_0=0$, then $\mathfrak{g}$ is conjugate to 
$$
\mbox{either}\;\; \mathbb C (H_{\alpha}+X_{3\alpha +2\beta})\;\; \mbox{or}\;\; \mathbb C (H_{\beta}+X_{2\alpha +\beta}).
$$

See Table~\ref{table:3}, Nos.~$25$, $26$. These subalgebras are not conjugate, because ${\mathfrak{u}}_{3\alpha +2\beta}$ and ${\mathfrak{u}}_{2\alpha +\beta}$ are not conjugate.\\

If ${\mathfrak{g}}_0=\mathbb C X_{\mu}$, $\mu \in {\Phi}^{+}$, then $[X_{\gamma},X_{\mu}]=0$ and $\gamma \neq \mu$. There are five possibilities up to the Weyl group action:
$$
\gamma =\alpha :\;\; \mu \in \{ 3\alpha +\beta , 3\alpha +2\beta \},
$$
$$
\gamma =\beta :\;\; \mu \in \{ \alpha +\beta , 2\alpha +\beta , 3\alpha +2\beta \}.
$$

Every such pair gives a subalgebra ${\mathfrak{g}}_{\gamma ,\mu}=\mathbb C (H_{\gamma ^{\perp}}+X_{\gamma})\oplus \mathbb C X_{\mu}$, where $\gamma ^{\perp}\in {\Phi}^{+}$, $(\gamma , \gamma ^{\perp})=0$. See Table~\ref{table:3}, Nos.~$27$-$31$.\\

Suppose ${\mathfrak{g}}_{\gamma ,\mu}$ is conjugate to ${\mathfrak{g}}_{\gamma ',\mu '}$ by $s\in Aut(G_2)$. Then 
$$
s(X_{\mu})=\xi\cdot X_{\mu '}, \;\;\xi\in {\mathbb C}^{*}.
$$

After composing $s$ with $\exp(c\cdot ad_{X_{\mu '}})$ for some $c\in \mathbb C$, $s(H_{\gamma ^{\perp}}+X_{\gamma})=\eta \cdot (H_{{\gamma ' } ^{\perp}}+X_{\gamma '})$, $\eta \in {\mathbb C}^{*}$, i.e. 
$$
s(H_{\gamma ^{\perp}})=\eta\cdot H_{{\gamma ' } ^{\perp}} \;\; \mbox{and}\;\; s(X_{\gamma})=\eta \cdot X_{\gamma '}.
$$

Let $y\in \mathfrak{h}$ be arbitrary. Since $s$ preserves the Lie bracket $[y, H_{\gamma ^{\perp}}]$,
$$
s(y)=z+c_{-\gamma '}\cdot X_{-\gamma '}+c_{\gamma '}\cdot X_{\gamma '},\;\; \mbox{where}\; z\in \mathfrak{h}.
$$

Since $s$ preserves the Lie bracket $[y, X_{\gamma}]$, $c_{-\gamma '}=0$. Composing $s$ with $\exp(c\cdot ad_{X_{\gamma '}})$ for some $c\in \mathbb C$, we can ensure that $s(\mathfrak{h})=\mathfrak{h}$. Hence $\{ {\gamma}'; {\mu}' \}$ and $\{ {\gamma}; {\mu} \}$ lie in the same orbit of the Weyl group.\\

If ${\mathfrak{g}}_0=\mathbb C X_{\mu}\oplus \mathbb C X_{\nu}$, $\mu\prec \nu$, $\mu ,\nu \in {\Phi}^{+}$, then $\mu ,\nu \neq \gamma$ and
$$
[X_{\gamma},X_{\mu}]\in \mathbb C X_{\nu},\quad [X_{\gamma},X_{\nu}]=[X_{\mu},X_{\nu}]=0.
$$

There are seven possibilities up to the Weyl group action:
$$
\mu =\alpha ,\;\; \nu=3\alpha +\beta :\;\; \gamma \in \{ 2\alpha +\beta , 3\alpha +2\beta \},
$$
$$
\mu =\alpha ,\;\; \nu=3\alpha +2\beta :\;\; \gamma =3\alpha +\beta ,
$$
$$
\mu =\beta ,\;\; \nu=3\alpha +2\beta :\;\; \gamma \in \{ \alpha +\beta , 2\alpha +\beta , 3\alpha +\beta \},
$$
$$
\mu =\beta ,\;\; \nu=\alpha +\beta :\;\; \gamma =3\alpha +2\beta .
$$

Every such triple gives a subalgebra ${\mathfrak{g}}_{\gamma ,\mu ,\nu}=\mathbb C (H_{\gamma ^{\perp}}+X_{\gamma})\oplus \mathbb C X_{\mu}\oplus \mathbb C X_{\nu}$, where $\gamma ^{\perp}\in {\Phi}^{+}$, $(\gamma , \gamma ^{\perp})=0$. See Table~\ref{table:3}, Nos.~$32$-$38$.\\

Suppose ${\mathfrak{g}}_{\gamma ,\mu ,\nu}$ is conjugate to ${\mathfrak{g}}_{\gamma ',\mu ' , \nu '}$ by $s\in Aut(G_2)$. As above, we may assume that $s(\mathfrak{h})=\mathfrak{h}$. Hence $\{ {\gamma}'; {\mu}' ,\nu ' \}$ and $\{ {\gamma}; {\mu} , \nu \}$ lie in the same orbit of the Weyl group.\\

If ${\mathfrak{g}}_0=\mathbb C X_{\mu}\oplus \mathbb C X_{\nu} \oplus \mathbb C X_{\rho}$, $\mu\prec \nu \prec \rho$, $\mu ,\nu ,\rho \in {\Phi}^{+}$, then $\mu ,\nu ,\rho \neq \gamma$ and
$$
[X_{\gamma},X_{\mu}]\in \mathbb C X_{\nu}\oplus \mathbb C X_{\rho},\quad [X_{\gamma},X_{\nu}]\in \mathbb C X_{\rho},\quad [X_{\mu},X_{\nu}]\in \mathbb C X_{\rho},
$$
$$
[X_{\gamma},X_{\rho}]=[X_{\mu},X_{\rho}]=[X_{\nu},X_{\rho}]=0.
$$

There are six possibilities up to the Weyl group action:
$$
\mu =\alpha +\beta ,\;\; \nu=3\alpha +\beta ,\;\; \rho=3\alpha +2\beta :\;\; \gamma =\beta,
$$
$$
\mu =\beta ,\;\; \nu=3\alpha +\beta ,\;\; \rho=3\alpha +2\beta :\;\; \gamma =\alpha +\beta,
$$
$$
\mu =\alpha ,\;\; \nu=3\alpha +\beta ,\;\; \rho=3\alpha +2\beta :\;\; \gamma =2\alpha +\beta,
$$
$$
\mu =\alpha ,\;\; \nu=2\alpha +\beta ,\;\; \rho=3\alpha +\beta :\;\; \gamma =3\alpha +2\beta,
$$
$$
\mu =\beta ,\;\; \nu=\alpha +\beta ,\;\; \rho=3\alpha +2\beta :\;\; \gamma \in \{ 2\alpha +\beta , 3\alpha +\beta \} .
$$

Every such fourtuple gives a subalgebra ${\mathfrak{g}}_{\gamma ,\mu ,\nu ,\rho}=\mathbb C (H_{\gamma ^{\perp}}+X_{\gamma})\oplus \mathbb C X_{\mu}\oplus \mathbb C X_{\nu}\oplus \mathbb C X_{\rho}$, where $\gamma ^{\perp}\in {\Phi}^{+}$, $(\gamma , \gamma ^{\perp})=0$. See Table~\ref{table:3}, Nos.~$39$-$44$.\\

Suppose ${\mathfrak{g}}_{\gamma ,\mu ,\nu ,\rho}$ is conjugate to ${\mathfrak{g}}_{\gamma ',\mu ' , \nu ' ,\rho '}$ by $s\in Aut(G_2)$. As above, we may assume that $s(\mathfrak{h})=\mathfrak{h}$. Hence $\{ {\gamma}'; {\mu}' ,\nu ' ,  \rho ' \}$ and $\{ {\gamma}; {\mu} , \nu  ,\rho \}$ lie in the same orbit of the Weyl group.\\

If ${\mathfrak{g}}_0=\mathbb C X_{\mu}\oplus \mathbb C X_{\nu} \oplus \mathbb C X_{\rho} \oplus \mathbb C X_{\sigma}$, $\mu\prec \nu \prec \rho\prec \sigma$, $\mu ,\nu ,\rho ,\sigma \in {\Phi}^{+}$, then $\mu ,\nu ,\rho , \sigma \neq \gamma$ and
$$
[X_{\gamma},X_{\mu}]\in \mathbb C X_{\nu}\oplus \mathbb C X_{\rho}\oplus \mathbb C X_{\sigma},\quad [X_{\gamma},X_{\nu}]\in \mathbb C X_{\rho}\oplus \mathbb C X_{\sigma},\quad [X_{\gamma},X_{\rho}]\in \mathbb C X_{\sigma},
$$
$$
[X_{\mu},X_{\nu}]\in \mathbb C X_{\rho}\oplus \mathbb C X_{\sigma},\quad [X_{\mu},X_{\rho}]\in \mathbb C X_{\sigma},\quad [X_{\nu},X_{\rho}]\in \mathbb C X_{\sigma},
$$
$$
[X_{\gamma},X_{\sigma}]=[X_{\mu},X_{\sigma}]=[X_{\nu},X_{\sigma}]=[X_{\rho},X_{\sigma}]=0.
$$

There are three possibilities up to the Weyl group action:
$$
\mu =\beta ,\;\; \nu=2\alpha +\beta ,\;\; \rho=3\alpha +\beta ,\;\; \sigma=3\alpha +2\beta :\;\; \gamma =\alpha +\beta,
$$
$$
\mu =\alpha ,\;\; \nu=2\alpha +\beta ,\;\; \rho=3\alpha +\beta ,\;\; \sigma=3\alpha +2\beta :\;\; \gamma =\alpha +\beta,
$$
$$
\mu =\beta ,\;\; \nu=\alpha +\beta ,\;\; \rho=2\alpha +\beta ,\;\; \sigma=3\alpha +2\beta :\;\; \gamma =3\alpha +\beta .
$$

Every such fourtuple gives a subalgebra ${\mathfrak{g}}_{\gamma ,\mu ,\nu ,\rho ,\sigma}=\mathbb C (H_{\gamma ^{\perp}}+X_{\gamma})\oplus \mathbb C X_{\mu}\oplus \mathbb C X_{\nu}\oplus \mathbb C X_{\rho}\oplus \mathbb C X_{\sigma}$, where $\gamma ^{\perp}\in {\Phi}^{+}$, $(\gamma , \gamma ^{\perp})=0$. See Table~\ref{table:3}, Nos.~$45$-$47$.\\

Suppose ${\mathfrak{g}}_{\gamma ,\mu ,\nu ,\rho ,\sigma}$ is conjugate to ${\mathfrak{g}}_{\gamma ',\mu ' , \nu ' ,\rho ' ,\sigma '}$ by $s\in Aut(G_2)$. As above, we may assume that $s(\mathfrak{h})=\mathfrak{h}$. Hence $\{ {\gamma}'; {\mu}' ,\nu ' ,  \rho ' , \sigma ' \}$ and $\{ {\gamma}; {\mu} , \nu  ,\rho  ,\sigma \}$ lie in the same orbit of the Weyl group.\\

Finally, if $dim({\mathfrak{g}}_0)=5$, then either
$$
{\mathfrak{g}}_0=\mathbb C X_{\beta}\oplus \mathbb C X_{\alpha +\beta}\oplus \mathbb C X_{2\alpha +\beta}\oplus \mathbb C X_{3\alpha +\beta}\oplus \mathbb C X_{3\alpha +2\beta}, \quad \gamma = \alpha
$$
or
$$
{\mathfrak{g}}_0=\mathbb C X_{\alpha}\oplus \mathbb C X_{\alpha +\beta}\oplus \mathbb C X_{2\alpha +\beta}\oplus \mathbb C X_{3\alpha +\beta}\oplus \mathbb C X_{3\alpha +2\beta}, \quad \gamma = \beta .
$$

See Table~\ref{table:3}, Nos.~$48$, $49$.

\end{proof}

\begin{center}
\begin{longtable}{|c|c|}
\caption{Solvable non-regular subalgebras of $G_2$ up to conjugacy}\label{table:3}\\
\hline
No. & Subalgebra 
\\ \hline\hline
	$1$ & $\mathbb C (X_{\alpha}+X_{3\alpha +2\beta})$
  \\ \hline
	$2$ & $\mathbb C (X_{\alpha}+X_{\beta})$
  \\ \hline
	$3$	& $\mathbb C (X_{\alpha}+X_{\beta})\oplus \mathbb C X_{3\alpha +2\beta}$
  \\ \hline		
	$4$	& $\mathbb C (X_{\beta}+X_{3\alpha +\beta})\oplus \mathbb C X_{2\alpha +\beta}$
  \\ \hline		
	$5$	& $\mathbb C (X_{\alpha +\beta}+X_{3\alpha +\beta})\oplus \mathbb C X_{3\alpha +2\beta}$
  \\ \hline
	$6$	& $\mathbb C (X_{\alpha}+X_{\beta})\oplus \mathbb C X_{3\alpha +\beta}\oplus \mathbb C X_{3\alpha +2\beta}$
  \\ \hline
	$7$	& $\mathbb C (X_{\beta}+X_{3\alpha +\beta})\oplus \mathbb C X_{2\alpha +\beta}\oplus \mathbb C X_{3\alpha +2\beta}$
  \\ \hline
	$8$	& $\mathbb C (X_{\alpha +\beta}+X_{3\alpha +\beta})\oplus \mathbb C X_{2\alpha +\beta}\oplus \mathbb C X_{3\alpha +2\beta}$
  \\ \hline
	$9$	& $\mathbb C (X_{\beta}+X_{\alpha +\beta}+\lambda \cdot X_{3\alpha +\beta})\oplus \mathbb C X_{2\alpha +\beta}\oplus \mathbb C X_{3\alpha +2\beta},\;\; \lambda \in {\mathbb C}^{*}$
  \\ \hline			
	$10$	& $\mathbb C (X_{\alpha}+X_{\beta})\oplus \mathbb C X_{2\alpha +\beta}\oplus \mathbb C X_{3\alpha +\beta}\oplus \mathbb C X_{3\alpha +2\beta}$
  \\ \hline		
	$11$	& $\mathbb C (X_{\beta}+X_{2\alpha + \beta})\oplus \mathbb C X_{\alpha +\beta}\oplus \mathbb C X_{3\alpha +\beta}\oplus \mathbb C X_{3\alpha +2\beta}$
  \\ \hline				
	$12$	& $\mathbb C (X_{\alpha}+X_{\beta})\oplus \mathbb C X_{\alpha +\beta}\oplus \mathbb C X_{2\alpha +\beta}\oplus \mathbb C X_{3\alpha +\beta}\oplus \mathbb C X_{3\alpha +2\beta}$
  \\ \hline	\hline	
	$13$ & $\mathbb C H_{3\alpha +\beta}\oplus \mathbb C (X_{\alpha}+X_{3\alpha +2\beta})$
  \\ \hline
	$14$ & $\mathbb C H_{9\alpha +5\beta}\oplus \mathbb C (X_{\alpha}+X_{\beta})$
  \\ \hline
	$15$	& $\mathbb C H_{9\alpha +5\beta}\oplus \mathbb C (X_{\alpha}+X_{\beta})\oplus \mathbb C X_{3\alpha +2\beta}$
  \\ \hline		
	$16$	& $\mathbb C H_{3\alpha +2\beta}\oplus \mathbb C (X_{\beta}+X_{3\alpha +\beta})\oplus \mathbb C X_{2\alpha +\beta}$
  \\ \hline		
	$17$	& $\mathbb C H_{3\alpha +2\beta}\oplus \mathbb C (X_{\alpha +\beta}+X_{3\alpha +\beta})\oplus \mathbb C X_{3\alpha +2\beta}$
  \\ \hline
	$18$	& $\mathbb C H_{9\alpha +5\beta}\oplus \mathbb C (X_{\alpha}+X_{\beta})\oplus \mathbb C X_{3\alpha +\beta}\oplus \mathbb C X_{3\alpha +2\beta}$
  \\ \hline
	$19$	& $\mathbb C H_{3\alpha +2\beta}\oplus \mathbb C (X_{\beta}+X_{3\alpha +\beta})\oplus \mathbb C X_{2\alpha +\beta}\oplus \mathbb C X_{3\alpha +2\beta}$
  \\ \hline
	$20$	& $\mathbb C H_{3\alpha +2\beta}\oplus \mathbb C (X_{\alpha +\beta}+X_{3\alpha +\beta})\oplus \mathbb C X_{2\alpha +\beta}\oplus \mathbb C X_{3\alpha +2\beta}$
  \\ \hline
	$21$	& \begin{tabular}{@{}c@{}} $\mathbb C H_{3\alpha +2\beta}\oplus \mathbb C (X_{\beta}+X_{\alpha +\beta}+\lambda \cdot X_{3\alpha +\beta})$ \\ $\oplus \mathbb C X_{2\alpha +\beta}\oplus \mathbb C X_{3\alpha +2\beta},\;\; \lambda \in {\mathbb C}^{*}$ \end{tabular}
  \\ \hline			
	$22$	& \begin{tabular}{@{}c@{}} $\mathbb C H_{9\alpha +5\beta}\oplus \mathbb C (X_{\alpha}+X_{\beta})\oplus \mathbb C X_{2\alpha +\beta}$ \\ $\oplus \mathbb C X_{3\alpha +\beta}\oplus \mathbb C X_{3\alpha +2\beta}$ \end{tabular}
  \\ \hline		
	$23$	& \begin{tabular}{@{}c@{}} $\mathbb C H_{3\alpha +2\beta}\oplus \mathbb C (X_{\beta}+X_{2\alpha + \beta})\oplus \mathbb C X_{\alpha +\beta}$ \\ $\oplus \mathbb C X_{3\alpha +\beta}\oplus \mathbb C X_{3\alpha +2\beta}$ \end{tabular}
  \\ \hline				
	$24$	& \begin{tabular}{@{}c@{}} $\mathbb C H_{9\alpha +5\beta}\oplus \mathbb C (X_{\alpha}+X_{\beta})\oplus \mathbb C X_{\alpha +\beta}$ \\ $\oplus \mathbb C X_{2\alpha +\beta}\oplus \mathbb C X_{3\alpha +\beta}\oplus \mathbb C X_{3\alpha +2\beta}$ \end{tabular}
	\\ \hline	\hline
	$25$	& $\mathbb C (H_{3\alpha +2\beta}+X_{\alpha})$
  \\ \hline				
	$26$	& $\mathbb C (H_{2\alpha +\beta}+X_{\beta})$
	\\ \hline\hline
	$27$	& $\mathbb C (H_{3\alpha +2\beta}+X_{\alpha})\oplus \mathbb C X_{3\alpha +\beta}$
  \\ \hline
	$28$	& $\mathbb C (H_{3\alpha +2\beta}+X_{\alpha})\oplus \mathbb C X_{3\alpha +2\beta}$
  \\ \hline					
	$29$	& $\mathbb C (H_{2\alpha +\beta}+X_{\beta})\oplus \mathbb C X_{\alpha +\beta}$
  \\ \hline
	$30$	& $\mathbb C (H_{2\alpha +\beta}+X_{\beta})\oplus \mathbb C X_{2\alpha +\beta}$
  \\ \hline
	$31$	& $\mathbb C (H_{2\alpha +\beta}+X_{\beta})\oplus \mathbb C X_{3\alpha +2\beta}$
  \\ \hline \hline
	$32$	& $\mathbb C (H_{\alpha}+X_{3\alpha +2\beta})\oplus \mathbb C X_{\alpha}\oplus \mathbb C X_{3\alpha +\beta}$
  \\ \hline
	$33$	& $\mathbb C (H_{\alpha}+X_{3\alpha +2\beta})\oplus \mathbb C X_{\beta}\oplus \mathbb C X_{\alpha +\beta}$
  \\ \hline
	$34$	& $\mathbb C (H_{\alpha +\beta}+X_{3\alpha +\beta})\oplus \mathbb C X_{\alpha}\oplus \mathbb C X_{3\alpha +2\beta}$
  \\ \hline
	$35$	& $\mathbb C (H_{\alpha +\beta}+X_{3\alpha +\beta})\oplus \mathbb C X_{\beta}\oplus \mathbb C X_{3\alpha +2\beta}$
  \\ \hline
	$36$	& $\mathbb C (H_{\beta}+X_{2\alpha +\beta})\oplus \mathbb C X_{\alpha}\oplus \mathbb C X_{3\alpha +\beta}$
  \\ \hline
	$37$	& $\mathbb C (H_{\beta}+X_{2\alpha +\beta})\oplus \mathbb C X_{\beta}\oplus \mathbb C X_{3\alpha +2\beta}$
  \\ \hline
	$38$	& $\mathbb C (H_{3\alpha +\beta}+X_{\alpha +\beta})\oplus \mathbb C X_{\beta}\oplus \mathbb C X_{3\alpha +2\beta}$
  \\ \hline\hline
	$39$	& $\mathbb C (H_{2\alpha +\beta}+X_{\beta})\oplus \mathbb C X_{\alpha +\beta}\oplus \mathbb C X_{3\alpha +\beta}\oplus \mathbb C X_{3\alpha +2\beta}$
  \\ \hline						
	$40$	& $\mathbb C (H_{3\alpha +\beta}+X_{\alpha +\beta})\oplus \mathbb C X_{\beta}\oplus \mathbb C X_{3\alpha +\beta}\oplus \mathbb C X_{3\alpha +2\beta}$
  \\ \hline	
	$41$	& $\mathbb C (H_{\beta}+X_{2\alpha +\beta})\oplus \mathbb C X_{\alpha}\oplus \mathbb C X_{3\alpha +\beta}\oplus \mathbb C X_{3\alpha +2\beta}$
  \\ \hline
	$42$	& $\mathbb C (H_{\beta}+X_{2\alpha +\beta})\oplus \mathbb C X_{\beta}\oplus \mathbb C X_{\alpha +\beta}\oplus \mathbb C X_{3\alpha +2\beta}$
  \\ \hline
	$43$	& $\mathbb C (H_{\alpha +\beta}+X_{3\alpha +\beta})\oplus \mathbb C X_{\beta}\oplus \mathbb C X_{\alpha +\beta}\oplus \mathbb C X_{3\alpha +2\beta}$
  \\ \hline	
	$44$	& $\mathbb C (H_{\alpha}+X_{3\alpha +2\beta})\oplus \mathbb C X_{\alpha}\oplus \mathbb C X_{2\alpha +\beta}\oplus \mathbb C X_{3\alpha +\beta}$
  \\ \hline\hline	
	$45$	& $\mathbb C (H_{3\alpha +\beta}+X_{\alpha +\beta})\oplus \mathbb C X_{\beta}\oplus \mathbb C X_{2\alpha +\beta}\oplus \mathbb C X_{3\alpha +\beta}\oplus \mathbb C X_{3\alpha +2\beta}$
	 \\ \hline
	$46$ & $\mathbb C (H_{3\alpha +\beta}+X_{\alpha +\beta})\oplus \mathbb C X_{\alpha}\oplus \mathbb C X_{2\alpha +\beta}\oplus \mathbb C X_{3\alpha +\beta}\oplus \mathbb C X_{3\alpha +2\beta}$
	 \\ \hline
	$47$	& $\mathbb C (H_{\alpha +\beta}+X_{3\alpha +\beta})\oplus \mathbb C X_{\beta}\oplus \mathbb C X_{\alpha +\beta}\oplus \mathbb C X_{2\alpha +\beta}\oplus \mathbb C X_{3\alpha +2\beta}$
  \\ \hline\hline
	$48$ & \begin{tabular}{@{}c@{}} $\mathbb C (H_{3\alpha +2\beta} + X_{\alpha})\oplus \mathbb C X_{\beta}\oplus \mathbb C X_{\alpha +\beta}\oplus \mathbb C X_{2\alpha +\beta}$ \\ $\oplus \mathbb C X_{3\alpha +\beta}\oplus \mathbb C X_{3\alpha +2\beta}$ \end{tabular}
	  \\ \hline
	$49$ & \begin{tabular}{@{}c@{}} $\mathbb C (H_{2\alpha +\beta} + X_{\beta})\oplus \mathbb C X_{\alpha}\oplus \mathbb C X_{\alpha +\beta}\oplus \mathbb C X_{2\alpha +\beta}$ \\ $\oplus \mathbb C X_{3\alpha +\beta}\oplus \mathbb C X_{3\alpha +2\beta}$ \end{tabular}
  \\ \hline
\end{longtable}
\end{center}

\section*{Acknowledgement}

The author is grateful to Beijing International Center for Mathematical Research, the Simons Foundation and Peking University for support, excellent working conditions and encouraging atmosphere.\\

\newpage
\bibliographystyle{ams-plain}

\bibliography{SubalgebrasG2}

\end{document}